\documentclass[10pt,a4paper]{article}
\usepackage{amsmath,amssymb,amsfonts,amsthm}
\usepackage{graphicx}
\usepackage{float,graphicx,color}
\usepackage[all]{xy}






\DeclareMathOperator{\vol}{vol}


\newcommand{\rmd}{\mathrm{d}}

\newcommand{\rmL}{\mathrm{L}}

\newcommand{\bbN}{\mathbb{N}}

\newcommand{\bbR}{\mathbb{R}}

\newcommand{\frE}{\mathfrak{E}}

\newcommand{\frS}{\mathfrak{S}}

\newcommand{\frV}{\mathfrak{V}}

\newcommand{\calB}{\mathcal{B}}
\newcommand{\calC}{\mathcal{C}}

\newcommand{\calF}{\mathcal{F}}
\newcommand{\calG}{\mathcal{G}}

\newcommand{\calP}{\mathcal{P}}
\newcommand{\calQ}{\mathcal{Q}}

\newcommand{\calT}{\mathcal{T}}
\newcommand{\calU}{\mathcal{U}}
\newcommand{\calV}{\mathcal{V}}
\newcommand{\calX}{\mathcal{X}}
\newcommand{\calY}{\mathcal{Y}}
\newcommand{\calZ}{\mathcal{Z}}

\newcommand{\beq}{\begin{equation}}
\newcommand{\eeq}{\end{equation}}

\newcommand{\ts}{\textstyle}

\newcommand{\Alt}{\rmL_{a}}

\newcommand{\WF}{\mathfrak{W}}

\newcommand{\bs}{{\scriptscriptstyle \bullet}}

\newcommand{\rmins}[1]{ {\textrm{ #1 } } }

\newcommand{\myand}{ \quad \textrm{and} \quad }

\newcommand{\for}{\textrm{ for }}
\newcommand{\myfor}{\for}

\newcommand{\orient}{o}
\newcommand{\deltat}{\delta'} 
\newcommand{\deltal}{\tau}

\newcommand{\subcell}{\lhd} 

\newcommand{\lspan}{\mathrm{span}}

\newcommand{\ctr}{\lfloor}

\newcommand{\poly}{\calP}
\newcommand{\alt}{\Lambda}
\newcommand{\simp}{\calC}


\newcommand{\mapping}[4]{
\left\{
\begin{array}{rcl}
\displaystyle #1  &\to& #2 ,\\
\displaystyle #3  &\mapsto      & #4.
\end{array} \right.
}













\newtheorem{theorem}{Theorem}[section]
\newtheorem{lemma}[theorem]{Lemma}
\newtheorem{corollary}[theorem]{Corollary}
\newtheorem{proposition}[theorem]{Proposition}

\theoremstyle{definition}
\newtheorem{definition}{Definition}[section]

\theoremstyle{remark}
\newtheorem{remark}{Remark}[section]




\theoremstyle{plain}

 \title{On high order finite element spaces\\ of differential forms}

\author{Snorre H. {\sc Christiansen}\footnote{Department of Mathematics, University of Oslo, PO box 1053 Blindern, NO-0316 Oslo, Norway. email: {\tt snorrec@math.uio.no} (corresponding author).} \and Francesca {\sc Rapetti}\footnote{Universit\'e Nice Sophia Antipolis, CNRS,  LJAD, UMR 7351, 06100 Nice, France. }}

\date{}
%
%

\begin{document}

\maketitle

\begin{abstract}
We show how the high order finite element spaces of differential forms due to Raviart-Thomas-N\'edelec-Hiptmair fit into the framework of finite element systems, in an elaboration of the finite element exterior calculus of Arnold-Falk-Winther. Based on observations by Bossavit, we provide new low order degrees of freedom. As an alternative to existing choices of bases, we provide canonical resolutions in terms of scalar polynomials and Whitney forms.
\end{abstract}


\bigskip

{\sc MSC}: 65N30, 58A10.

\smallskip

{\sc Key words}: finite elements, differential forms, high order approximations.


\section{Introduction}

Mixed finite elements, adapted to the fundamental differential operators gradient, curl and divergence,  were introduced in \cite{RavTho77} for $\bbR^2$ and \cite{Ned80} for $\bbR^3$. They have developed into a powerful tool for simulating a wide range of partial differential equations modelling for instance fluid flow and electromagnetic waves \cite{BreFor91}\cite{RobTho91}\cite{Mon03}. As pointed out in \cite{Bos88}\cite{Bos98}, lowest order mixed finite elements correspond to constructs in algebraic topology known as Whitney forms \cite{Whi57}. In \cite{Wei52} they are actually attributed to de Rham (see also the footnote p. 139 in \cite{Whi57}). Their initial purpose was to relate the de Rham sequence of smooth differential forms to simplicial cochain sequences, and prove de Rham's theorem, that these sequences have isomorphic cohomology groups. Eigenvalue convergence for corresponding discretizations of the Hodge Laplacian has also been proved \cite{DodPat76}, prefiguring some of the results obtained in a finite element context, for which we refer to \cite{Bof10}\cite{ChrWin13IMA}. Mixed finite elements extending Whitney forms to high order -- that is, higher degree polynomials -- and N\'ed\'elec elements to any space dimension, were presented in \cite{Hip99}\cite{Hip02}. 
Giving a lead role to differential complexes in numerical analysis by
developing the interplay between differential topology and finite element methods, has given rise to the subject of \emph{finite element exterior calculus}, programmed in \cite{Arn02} and most recently reviewed in \cite{ArnFalWin10}.

We refer to the high order spaces  of \cite{Hip99} as \emph{trimmed} polynomial differential forms, since for a given degree, some of the top degree polynomials are (carefully) removed from the space. These spaces are naturally spanned by products of polynomials with Whitney forms, but this is not a tensor product, as we explain later. Nevertheless, as remarked in \cite{Chr07NM}\footnote{The preprint, available on the arXiv, contained a number of results excluded from the version published by Numer. Math., but of importance to the present paper.}, they have a filtering by polynomial degree compatible with the wedge product, the corresponding de Rham theorem (concerning cohomology groups) follows from the existence of particular degrees of freedom, and eigenvalue convergence for the Hodge Laplacian follows from composing the resulting interpolators with a local smoothing operator. In \cite{ArnFalWin06} the construction of \cite{Hip99} was reworked in terms of the Koszul complex, emphasizing the fundamental duality, through degrees of freedom, between the there denominated $\poly^-_r\alt^k$ and $\poly_r\alt^k$ spaces, which correspond to N\'ed\'elec's first \cite{Ned80} and second \cite{Ned86} family. It was also placed in a general framework of discretizations of Hilbert complexes by subcomplexes, which has applications to other situations, such as elasticity.

A powerful tool for the convergence analysis of discretizations of such complexes is commuting projections which are stable in appropriate norms. Such projections have been constructed for the de Rham complex of differential forms equipped with the $\rmL^2$ metric, \cite{Sch08}\cite{Chr07NM}\cite{ArnFalWin06}\cite{ChrWin08}\cite{ChrMunOwr11}. One recovers in particular,  in this degree of generality, convergence for the eigenvalues of discretizations of the Hodge Laplacian. For a discussion of the existence of commuting projections, see \cite{ChrWin13IMA}. In \cite{ChrMunOwr11}, the techniques are extended to include $\rmL^p$ estimates, with applications to discrete Sobolev injections and translation estimates.

A notion of \emph{finite element system} (FES) has also been developed, see \cite{Chr08M3AS}\cite{Chr09AWM} and especially \cite{ChrMunOwr11}(\S 5). It is a general theory, abstracting the good properties of known mixed finite elements, but  allowing for cellular decompositions of space and non-polynomial differential forms. In precisely determined circumstances, they provide good subcomplexes of the de Rham complex. This framework can  be used to construct dual finite elements \cite{BufChr07}\cite{Chr08M3AS}, minimal ones \cite{Chr10CRAS}, tensor-products \cite{Chr09AWM}\cite{ChrMunOwr11} and upwinded variants \cite{Chr13FoCM}. 

That trimmed polynomial differential forms fit nicely into the framework of FESs can be deduced directly from results in \cite{Hip99}\cite{ArnFalWin06}. However the general theorems of FESs allows one to streamline the proofs. Our first task in the present work is to spell this out. For this purpose, we provide some new results both on FESs and on trimmed polynomial differential forms. In particular, we rely on a general way of checking an extension property of element systems (Proposition \ref{prop:extrec}), and give a new result on the duality between $\poly^-_r\alt^k_0$ and  $\poly_r\alt^k$ (Propositions \ref{prop:diagdom}). 

Taking the cue from \cite{RapBos09}, we provide new degrees of freedom (dofs) which, for $k$-forms, consist in integrating on some "small" $k$-simplexes, and which, for scalar polynomials, result in  the Lagrange basis. These degrees of freedom are overdetermining (Proposition \ref{prop:overdet}). We also provide a general way to deduce unisolvent degrees of freedom from overdetermining ones, in such a way that the associated interpolator commutes.

As already mentioned, trimmed polynomial differential forms can be generated by taking products of polynomials and Whitney forms. Canonical bases of scalar polynomials in barycentric coordinates (such as the Lagrange and Bernstein bases) and the canonical basis of Whitney forms  provide canonical spanning families.  However these are not free, and in this sense the product is not a tensor product (see Remark \ref{rem:nottensor}). Many recipes for extracting a basis have been proposed. In \cite{ArnFalWin09}, bases are proposed for differential forms in arbitrary space dimension, and these have been used for implementations \cite{Kir14}. See also the references in \cite{ArnFalWin09}\cite{RapBos09} for previous constructions in the case of space dimension two and three. We mention in particular \cite{AinCoy03}\cite{GopGarDem05}\cite{SchZag05}. 

However,  it seems impossible to get a canonical basis: they all depend on the numbering of vertices. Typically, some members are just removed from the natural spanning families. In this paper we take a different approach, providing a systematic treatment of the linear relations in canonical spanning families, in terms of resolutions. Thus we provide canonical resolutions of $\poly^-_r\alt^k$ and $\poly^-_r\alt^k_0$, in which all the spaces have a canonical basis (Propositions \ref{prop:respl0} and \ref{prop:respl}).

Finally we provide some details on how to compute with canonical spanning families. We construct a parametrized family of bases of polynomials on a simplex, which contains the Lagrange and the Bernstein basis as special cases. We compute scalar products of differential forms from the data consisting of edge lengths in the mesh. We also provide a formula for computing wedge products. Finally we make some remarks on tensor products.

The paper is organized as follows. In \S \ref{sec:framework} we introduce the notion of FES, and discuss conditions under which it is \emph{compatible} in the sense, for instance, of admitting a commuting interpolator.  In \S \ref{sec:trimmed} we show how trimmed polynomial differential forms fit into that framework. In particular we prove compatibility. We also introduce a notion of "small" degrees of freedom, and construct canonical resolutions equipped with canonical bases. In \S \ref{sec:various} we provide explicit formulas for a number of operations in barycentric coordinates.

\section{Abstract framework\label{sec:framework}}

\subsection{Glossary}
The abstract framework of Finite Element Systems involves a number of concepts that are either new or given a more precise meaning than usual. The following list with references is provided for the convenience of the reader:

\begin{itemize}
\item \emph{cell, cellular complex:} Definition \ref{def:cell}.
\item \emph{subcomplex, subcell, $\subcell (T)$:} Equation (\ref{eq:subc}) and above.
\item \emph{boundary of a cell:} Equation (\ref{eq:boundary}).
\item \emph{$\calT^k$:} Equation (\ref{eq:caltk}).
\item \emph{relative orientation, $\orient(T',T)$:} after Equation (\ref{eq:boundary}).
\item \emph{cochains, coboundary:} after Equation (\ref{eq:caltk}).
\item \emph{element system:} Definition \ref{def:es}.
\item \emph{$E^k(\calT)$:} Equation (\ref{eq:invlim}).
\item \emph{element systems: extension property, local exactness and compatibility:} Definition \ref{def:extloc}.
\item \emph{$E^k_0(T)$:} Definition \ref{def:extloc}.
\item \emph{system of degrees of freedom, sysdof:} Definition \ref{def:sysdof}.
\item \emph{unisolvence of a sysdof:} Equation (\ref{eq:freedom}).
\item \emph{interpolator:} Definition \ref{def:int}.
\item \emph{harmonic degrees of freedom, harmonic interpolator:} Equations (\ref{eq:harmdofk},\ref{eq:harmdof}).
\end{itemize}

\subsection{Discrete and differential geometry \label{sec:discdiffgeo}}

\begin{definition}\label{def:cell}
A \emph{cell} in a metric space $S$ is a subset either reduced to a singleton, or for which there is a Lipschitz isomorphism (a bijection which is Lipschitz in both directions) to the closed unit ball in $\bbR^k$ ($k \in \bbN$, $k \neq 0$). The uniquely determined $k \in \bbN$ is called the dimension of the cell.  The interior and the boundary of a cell are those inherited from the corresponding unit ball (they do not depend on the choice of Lipschitz isomorphism). By convention, a singleton has dimension $0$ and has empty boundary.

A \emph{cellular complex} for a metric space $S$ is a collection $\calT$ of cells in $S$, such that the following conditions hold:
\begin{itemize}
\item Distinct cells in $\calT$ have disjoint interiors.
\item The boundary of any cell in $\calT$ is a union of cells in $\calT$.
\item The union of all cells in $\calT$ is $S$.
\end{itemize}
\end{definition}

A cellular \emph{subcomplex} of $\calT$ is a subset of $\calT$ which is a cellular complex (for a subspace of $S$). A subcell of a cell $T$ is an element $T'$ of $\calT$ included in $T$. We write $T' \subcell T$ describe this situation. For $T \in \calT$ its subcells consitute a cellular subcomplex denoted $\subcell (T)$. Thus, with our notations:
\begin{equation}\label{eq:subc}
\subcell (T) = \{T' \ : \ T' \subcell T\}.
\end{equation}
The boundary of a cell is denoted $\partial T$ and considered equipped with the cellular subcomplex:
\begin{equation}\label{eq:boundary}
\partial T = \{T' \in \subcell (T) \ : \ T' \neq T \}.
\end{equation}

In the following we suppose that each cell $T \in \calT$ of dimension at least $1$ has been oriented (as a manifold). The \emph{relative orientation} of two cells $T$ and $T'$ in $\calT$, also called the \emph{incidence number}, is denoted $\orient(T,T')$. It is non-zero only when $T'$ is in the boundary of $T$ and has codimension $1$, in which case its value  is $\pm 1$, depending on whether $T'$ is outward oriented compared with $T$. This definition guarantees that the following Stokes theorem holds. For any smooth enough $k$-form $u$ on the $(k+1)$-cell $T$, we have:
\begin{equation}\label{eq:stokes}
\int_{T} \rmd u = \sum_{T' \subcell T} \orient(T, T') \int_{T'} u.
\end{equation}
When the cell $T$ has dimension $1$, the relative orientation $\orient(T, T')$ of its vertices, is defined so that the one dimensional Stokes theorem holds, integration on a point being function evaluation.

We let $\calT^k$ denote the subset of $\calT$ consisting of cells of dimension $k$:
\begin{equation}\label{eq:caltk}
\calT^k = \{ T \in \calT \ : \ \dim T = k \}.
\end{equation} 

For each $k$, maps $c:\calT^k \to \bbR$ are called $k$-\emph{cochains}, and they constitute a vector space denoted $\calC^k(\calT)$. The \emph{coboundary} operator $\delta : \calC^k(\calT) \to \calC^{k+1}(\calT)$ is defined by:
\begin{equation}
(\delta c)_T= \sum_{T'\in \calT^{k+1}} \orient(T,T')c_{T'}.
\end{equation}
The adjoint of $\delta$ is the boundary operator, denoted $\delta'$. We have $\delta \delta = 0$ so that the family $\calC^\bs(\calT)$ is a complex, called the cochain complex and represented by:
\beq
\xymatrix{
0 \ar[r] & \calC^0(\calT) \ar[r]^{\delta} & \calC^1(\calT) \ar[r]^{\delta} & \cdots
}
\eeq

When $S$ is a smooth manifold we denote by $\Omega^k(S)$ the space of smooth differential $k$-forms on $S$. 
For each $k$ we denote by $\rho^k : \Omega^k(S) \to \calC^k(\calT)$ the de Rham map,  which is defined by:
\begin{equation}
\rho^k: u \mapsto (\int_T u )_{T\in \calT^k}.
\end{equation}
As an application of Stokes theorem  on the cells of dimension $k+1$, in the form (\ref{eq:stokes}), the following diagram commutes:
\begin{equation}
\xymatrix{
\Omega^k(S) \ar[r]^\rmd \ar[d]^{\rho^k} & \Omega^{k+1}(S) \ar[d]^{\rho^{k+1}}\\
\calC^k(\calT) \ar[r]^{\delta} & \calC^{k+1}(\calT)
}
\end{equation}
A celebrated theorem of de Rham states that the above morphism of complexes induces isomorphisms on cohomology groups. Whitney forms, which will be introduced later, provide a tool for proving this \cite{Wei52}\cite{Whi57}.

\subsection{Finite element systems}

If $T$ is a cell in a cellular complex $\cal T$,  we denote by $\frE^k(T)$ the set of $k$-forms on $T$ with the following property:  for any $T' \in \calT$ included in $T$,  its pullback to $T'$ is in $\rmL^2(T')$ and has its exterior derivative in $\rmL^2(T')$.

\begin{definition}\label{def:es}
Suppose $\calT$ is a cellular complex. An \emph{element system} on $\calT$, is a family of  closed subspaces $E^k(T)  \subseteq \frE^k(T)$, one for each $k\in \bbN$ and each $T \in \calT$, subject to the following requirements:
\begin{itemize}
\item The exterior derivative should induce maps:
\beq
\rmd: E^k(T) \to E^{k+1}(T).
\eeq
\item If $T' \subseteq T$ are two cells in $\calT$ and  $i_{TT'}: T' \to T$ denotes the canonical injection, then pullback by $i_{TT'}$ should induce a map:
\beq
i_{TT'}^\star: E^k(T) \to E^k(T').
\eeq
\end{itemize}
\end{definition}
For instance the spaces $\frE^k(T)$ constitute an element system. A finite element system is one in which all the spaces are finite dimensional.

We define $E^k(\calT)$ as follows :
\begin{equation}\label{eq:invlim}
E^k(\calT)= \{u \in \bigoplus_{T \in \calT} E^k(T) \ : \  \forall T, T' \in \calT \quad  T' \subseteq T \Rightarrow u_{T}|_{T'} = u_{T'}\}. 
\end{equation}
In this definition $u_{T}|_{T'}$ denotes the pullback of $u_T$ to $T'$ by the inclusion map. In terms of category theory this is an inverse limit.
 
Elements of $E^k(\calT)$ may be regarded as differential forms defined piece-wise, which are continuous across interfaces between cells, in the sense of equal pullbacks to the interface. In $\bbR^3$  one-forms and two-forms correspond to vector fields, and then the continuity holds for tangential and normal components respectively.

This definition will also be applied to cellular subcomplexes of $\calT$, such as the boundary $\partial T$ of any given cell $T\in \calT$. In particular this is the meaning given to $E^k(\partial T)$ in the following definition.

\begin{definition}\label{def:extloc}
Consider now the following two conditions on an element system $E$ on a cellular complex $\calT$: 
\begin{itemize}
\item \emph{Extensions.} For each $T\in \calT$ and $k\in \bbN$, the restriction operator(pullback to the boundary) $E^k(T) \to E^k(\partial T)$ is onto. The kernel of this map is denoted $E^k_0(T)$.
\item \emph{Local exactness.} The following sequence is exact for each $T\in \calT$:
\begin{equation}\label{eq:coh}
\xymatrix{
0 \ar[r] & \bbR \ar[r] & E^0(T) \ar[r]^\rmd & E^1(T) \ar[r]^\rmd & \cdots \ar[r]^\rmd &  E^{\dim T}(T) \ar[r] & 0.
}
\end{equation}
The second arrow sends an element of $ \bbR$ to the constant function on $T$ taking this value.
\end{itemize}
We will say that an element system \emph{admits extensions} if the first condition  holds, is \emph{locally exact} if the second condition holds and is \emph{compatible} if both hold.
\end{definition}

The following result strengthens Proposition 3.1 in \cite{Chr08M3AS} by an "only if" (see also Proposition 5.14 in \cite{ChrMunOwr11}).
\begin{proposition}\label{prop:extdim}
Let $E$ be a FES on a cellular complex $\calT$. Then: 
\begin{itemize}
\item We have:
\begin{equation}\label{eq:bounddim}
\dim E^k (\calT) \leq \sum_{ T\in \calT}\dim E^k_0(T).
\end{equation}
\item Equality holds in (\ref{eq:bounddim})  if and only if $E$ admits extensions for $k$-forms on each $T \in \calT$.
\end{itemize}
\end{proposition}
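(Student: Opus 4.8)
The plan is to filter the global space $E^k(\calT)$ by building up the complex one cell at a time, in order of the subcell relation, and to track how the dimension grows at each step.

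First I would choose an enumeration $T_1,\dots,T_N$ of the cells of $\calT$ such that every proper subcell of $T_i$ appears before $T_i$; this is possible since the subcell relation is a partial order refining dimension. Setting $\calT_i=\{T_1,\dots,T_i\}$, each $\calT_i$ is a cellular subcomplex and $\partial T_i\subseteq \calT_{i-1}$. The component-dropping restriction $r_i:E^k(\calT_i)\to E^k(\calT_{i-1})$ then fits into an exact sequence
\begin{equation}
0 \longrightarrow E^k_0(T_i) \longrightarrow E^k(\calT_i) \xrightarrow{\ r_i\ } E^k(\calT_{i-1}),
\end{equation}
where the identification of $\ker r_i$ with $E^k_0(T_i)$ comes from the inverse-limit constraints: an element killed by $r_i$ has all components zero except on $T_i$, and the conditions $u_{T_i}|_{T'}=0$ over the proper subcells $T'\subcell T_i$ say exactly that $u_{T_i}$ restricts to zero on $\partial T_i$. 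One checks that no cell of $\calT_{i-1}$ strictly contains $T_i$, so that no further compatibility constraint is imposed; this is guaranteed by the ordering. Hence $\dim E^k(\calT_i)=\dim E^k_0(T_i)+\dim\ran r_i\le \dim E^k_0(T_i)+\dim E^k(\calT_{i-1})$, and telescoping from $\calT_0=\emptyset$ yields (\ref{eq:bounddim}). Crucially, equality holds overall if and only if every $r_i$ is onto, since each intermediate inequality must then be saturated.

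Second, for the \emph{if} direction I would show that admitting extensions forces each $r_i$ to be onto. Given $v\in E^k(\calT_{i-1})$, its components over $\subcell(T_i)\setminus\{T_i\}=\partial T_i$ assemble into an element of $E^k(\partial T_i)$; the extension hypothesis provides some $u_{T_i}\in E^k(T_i)$ pulling back to it, and adjoining this component to $v$ gives a preimage in $E^k(\calT_i)$. Thus equality holds.

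The main obstacle, and the new content of the statement, is the \emph{only if} direction. Here I would exploit that the bound (\ref{eq:bounddim}), and hence the property of being an equality, is independent of the chosen enumeration, whereas which maps $r_i$ occur is not. I would also use the canonical isomorphism $E^k(\subcell(T))\cong E^k(T)$ — a compatible family over $\subcell(T)$ is determined by its top component — under which the restriction $E^k(\subcell(T))\to E^k(\partial T)$ becomes the extension operator $E^k(T)\to E^k(\partial T)$. Fixing a cell $T$, I would choose an enumeration in which the downward-closed subcomplex $\subcell(T)$ comes first, with $T$ last among them, so that for the index $j$ with $T_j=T$ one has $\calT_j=\subcell(T)$ and $\calT_{j-1}=\partial T$; the map $r_j$ is then precisely the extension operator for $T$. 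Since equality forces every $r_i$ to be onto for this enumeration as well, the extension operator of $T$ is onto; as $T$ was arbitrary, $E$ admits extensions for $k$-forms.
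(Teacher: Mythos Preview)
Your argument is correct. Both you and the paper prove the bound by filtering $\calT$ and tracking the dimension jump at each stage via a short exact sequence whose kernel is an $E^k_0$; the difference is that the paper filters by skeleta $\calT^{[m]}$ (adding all $m$-cells at once), while you filter by a linear extension of the subcell order (adding one cell at a time). For the \emph{if} direction the two arguments are essentially identical.

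The genuine divergence is in the \emph{only if} direction. The paper stays with the skeleton filtration and runs a downward induction: from equality at the top skeleton it deduces equality at every $\calT^{[m]}$, hence exactness of each sequence
\[
0 \to \bigoplus_{T\in\calT^m} E^k_0(T) \to E^k(\calT^{[m]}) \to E^k(\calT^{[m-1]}) \to 0,
\]
and then builds an extension of a given $u\in E^k(\partial T)$ by successively extending over $(k+1)$-cells, $(k+2)$-cells, etc., through the whole of $\calT$, finally restricting back to $T$. Your approach instead exploits the freedom in the enumeration: for each target cell $T$ you re-enumerate so that $\subcell(T)$ comes first with $T$ last, making the relevant restriction map $r_j$ coincide (via $E^k(\subcell(T))\cong E^k(T)$) with the very map $E^k(T)\to E^k(\partial T)$ whose surjectivity you want. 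Since the equality condition is enumeration-independent, surjectivity of all $r_i$ holds for this enumeration too, and you are done. This is a slick shortcut: it avoids both the downward induction and the global extension construction, at the price of choosing a different filtration for each cell. The paper's route, on the other hand, yields as a byproduct the stronger intermediate statement that equality in (\ref{eq:bounddim}) propagates to every skeleton.
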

\begin{proof} We denote by $\calT^{[m]}$ the $m$-skeleton of $\calT$, namely the cellular subcomplex consisting of cells of dimension at most $m$. Thus:
\beq
\calT^{[m]} = \calT^0 \cup \ldots \cup \calT^m.
\eeq

(i) Consider the complex:
\begin{equation}\label{eq:restrskel}
\xymatrix{
0 \ar[r] & \bigoplus_{T \in \calT^m} E^k_0(T) \ar[r] & E^k(\calT^{[m]}) \ar[r] & E^k(\calT^{[m-1]} )\ar[r] & 0,
}
\end{equation}
where the second arrow is inclusion, and the third is restriction.

We get:
\begin{equation}\label{eq:skeldim}
\dim E^k(\calT^{[m]}) \leq \sum_{T \in \calT^m} \dim E^k_0(T) + \dim E^k(\calT^{[m-1]}).
\end{equation}

From this, (\ref{eq:bounddim}) follows.

(ii) If $E$ admits extensions for $k$-forms, (\ref{eq:restrskel}) is an exact sequence, so that equality holds in (\ref{eq:skeldim}). Hence equality holds in (\ref{eq:bounddim}).

(iii) Suppose  equality holds in (\ref{eq:bounddim}). 

We claim first that for all $m$-skeletons of $\calT$:
\[ 
\dim E^k (\calT^{[m]}) =  \sum_{ T\in \calT^{[m]}}\dim  E^k_0(T).
\] 
Indeed, if the claim is true for $m$, then we deduce from (\ref{eq:skeldim}):
\[ 
\dim E^k(\calT^{[m-1]}) \geq \sum_{ T\in \calT^{[m]}}\dim E^k_0(T) - \sum_{T \in \calT^m} \dim E^k_0(T) = \sum_{ T\in \calT^{[m-1]}}\dim  E^k_0(T).
\] 
Equality follows, and from this the claim must also be true for $m-1$.

This proves the claim for all $m$, by a downward induction.

By dimension count, it then follows that each sequence (\ref{eq:restrskel}) is exact.

Now consider a cell $T$ and $u \in E^k(\partial T)$. Define $u$ on $k$-cells in $\calT$. Extend it to $(k+1)$-cells using exactness of (\ref{eq:restrskel}) for $m = k+1$. Continue extending $u$, increasing $m$ by one at each step. Whenever $u$ was already defined on a cell, just keep the old value. This gives an extension of $u$ in $E^k(\calT)$, hence also in  $E^k(T)$.
\end{proof}

The following is a useful way of checking the extension property, abstracting the technique of proof of Proposition 3.3 in \cite{Chr07NM}(preprint). See also the related notion of consistent extension operators of \cite{ArnFalWin09} (in particular Theorem 4.3).

\begin{proposition}\label{prop:extrec} Suppose that $E$ is an element system and that $U \in \calT$. Suppose that, for each cell $V \in \partial U$, each element $v$ of $E^k_0(V)$ can be extended to an element $u = e_V(v)$ of $E^k(U)$ in such a way that ($u|_V = v$ and) for each cell $V' \in \partial U$ with the same dimension as $V$, but different from $V$, we have $u |_{V'} = 0$. Then $E^k$ admits extensions on $U$.
\end{proposition}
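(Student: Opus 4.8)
The plan is to verify the extension property directly: I will show that the restriction map $E^k(U) \to E^k(\partial U)$ is onto by extending an arbitrary coherent family $w = (w_T)_{T \in \partial U} \in E^k(\partial U)$ to a form in $E^k(U)$. The construction proceeds by induction on the skeleta of $\partial U$: I build forms $u^{(m)} \in E^k(U)$ whose pullback to every cell of $\partial U$ of dimension at most $m$ equals the corresponding $w_T$, letting $m$ run from $k$ up to $\dim U - 1$. Since a $k$-form pulls back to $0$ on any cell of dimension $< k$, the family $w$ is automatically trivial there, so nothing needs to be done below dimension $k$.

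For the base case $m=k$, each $k$-cell $V \in \partial U$ has $\partial V$ of dimension $< k$, so $E^k(\partial V) = 0$ and therefore $E^k_0(V) = E^k(V) \ni w_V$. I set $u^{(k)} = \sum_V e_V(w_V)$, summing over the $k$-cells of $\partial U$; by the defining property of $e_V$ each summand restricts to $w_V$ on $V$ and to $0$ on every other $k$-cell, so $u^{(k)}$ agrees with $w$ on the $k$-skeleton.

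In the inductive step I assume $u^{(m-1)}$ matches $w$ up to dimension $m-1$ and, for each $m$-cell $V$, consider the defect $z_V = w_V - u^{(m-1)}|_V$. A short computation places $z_V$ in $E^k_0(V)$: for a proper subcell $W \subcell V$ one has $W \in \partial U$ with $\dim W \le m-1$, so $z_V|_W = w_V|_W - u^{(m-1)}|_W = w_W - w_W = 0$, using coherence of $w$ and the induction hypothesis. I then put $u^{(m)} = u^{(m-1)} + \sum_V e_V(z_V)$, which by construction agrees with $w$ on every $m$-cell.

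The crux, and the step I expect to be the main obstacle, is to confirm that these dimension-$m$ corrections do not spoil the agreement already obtained on lower cells, i.e. that $e_V(z_V)|_W = 0$ whenever $\dim W < m$. Here functoriality of pullback does the work together with the equidimensional vanishing hypothesis: if $W$ lies in some $m$-cell $V' \neq V$ then $e_V(z_V)|_W = (e_V(z_V)|_{V'})|_W = 0$, while if $W \subcell V$ then $e_V(z_V)|_W = z_V|_W = 0$ because $z_V \in E^k_0(V)$. The one point requiring care is a cell $W$ that is a face of no $m$-cell of $\partial U$; I would record as a small lemma that this cannot occur for the boundary complex of a single cell, every lower face being a face of cells of each intermediate dimension. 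Iterating up to $m = \dim U - 1$ then yields $u = u^{(\dim U - 1)} \in E^k(U)$ restricting to $w$ on all of $\partial U$, which is the desired surjectivity.
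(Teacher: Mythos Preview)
Your proof is correct and follows the same skeleton-by-skeleton induction as the paper: at each stage correct the defect on the cells of the next dimension using the operators $e_V$, then iterate. You are more explicit than the paper in checking that these corrections do not disturb agreement on lower-dimensional cells and in isolating the small topological lemma (that every lower cell of $\partial U$ is a face of a cell of each higher dimension) on which both arguments tacitly rely.
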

\begin{proof}
Pick $v \in E^k(\partial U)$.  Define $u_{k-1}= 0 \in E^k( U)$.

 Pick $l \geq k-1$ and suppose that we have a $u_{l} \in E^k(U)$ such that $v - u_l |_{\partial U}$ is $0$ on all $l$-dimensional cells in $\partial U$. Put $w_l = v - u_l |_{\partial U}$. For each $(l+1)$-dimensional cell $V$ in $\partial U$, extend $w_l |_{V}$ to an element $e_V (w_l |_{V}) \in E^k(U)$, such that:
\begin{align}
(e_V (w_l |_V)) |_{V^{\phantom{\prime}}} & =  w_l |_V,\nonumber \\
(e_V (w_l |_V)) |_{V'} & = 0  \myfor V' \neq V,\ \dim V' = \dim V =  l+1.\nonumber 
\end{align}
Then put:
\[ 
u_{l+1} = u_l + \sum_{V \ : \ \dim V = l+1} e_V (w_l |_V) .
\]
Then $v - u_{l+1} |_{\partial U}$ is $0$ on all $(l+1)$-dimensional cells in $\partial U$. 

We may repeat until $l+1 = \dim U$ and then $u_{l+1}$ is the required extension of $v$.
\end{proof}

Recall Proposition 5.17 in \cite{ChrMunOwr11}:
\begin{proposition}\label{prop:equiv}
For an element system with extensions the exactness of (\ref{eq:coh}) on each $T\in \calT$ is equivalent to the combination of the following two conditions:
\begin{itemize}
\item For each $T \in \calT$,  $E^0(T)$ contains the constant functions.
\item For each $T\in \calT$, the following sequence (with boundary condition) is exact:
\begin{equation} \label{eq:coh0}
\xymatrix{
0 \ar[r] & E^0_0(T) \ar[r] & E^1_0(T) \ar[r] & \cdots \ar[r] & E^{\dim T}(T) \ar[r] & \bbR \ar[r] & 0.
}
\end{equation}
The second to last arrow is integration.
\end{itemize}
\end{proposition}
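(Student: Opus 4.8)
The plan is to read both conditions as statements about the cohomology of two complexes attached to a fixed cell $T$ of dimension $n=\dim T$, and to link them through the long exact sequence coming from the boundary $\partial T$. Write $A^k = H^k(E^\bullet(T))$ for the cohomology of the complex $E^\bullet(T)$ and $R^k = H^k(E^\bullet_0(T))$ for that of $E^\bullet_0(T)$; note that in top degree the pullback to the lower-dimensional boundary is forced to vanish, so $E^n_0(T)=E^n(T)$. With these notations, exactness of (\ref{eq:coh}) on $T$ amounts to $A^k=0$ for $k\geq 1$ together with $\bbR\subseteq E^0(T)$ (equivalently $A^0=\bbR$, since on the connected cell $T$ the closed $0$-forms are the constants); and exactness of (\ref{eq:coh0}) on $T$ amounts to $R^k=0$ for $k<n$ together with the integration map inducing an isomorphism $\int_T : R^n \xrightarrow{\ \sim\ } \bbR$. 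So the proposition becomes a comparison between the vanishing patterns of $A^\bullet$ and of $R^\bullet$.

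The bridge is the extension hypothesis. Since $E$ admits extensions, the restriction $E^k(T)\to E^k(\partial T)$ is onto with kernel $E^k_0(T)$, and as pullback commutes with $\rmd$ we obtain a short exact sequence of cochain complexes
\[
0 \to E^\bullet_0(T) \to E^\bullet(T) \xrightarrow{\ r\ } E^\bullet(\partial T) \to 0 .
\]
Its long exact sequence reads $\cdots \to R^k \to A^k \to B^k \to R^{k+1} \to A^{k+1} \to \cdots$, where $B^k=H^k(E^\bullet(\partial T))$. Everything now hinges on computing $B^\bullet$, that is, on the de Rham theorem for the element system restricted to the sphere $\partial T$.

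I would close the argument by induction on $n$. In the base cases $n=0$ (a point) and $n=1$ (an interval, with $\partial T\cong S^0$ a pair of points) the two complexes are short enough to compare by hand. For the inductive step, the cells of $\partial T$ all have dimension $<n$, so by the inductive hypothesis $E$ is compatible on $\partial T$; invoking the de Rham theorem for compatible systems on the sphere $\partial T$ (alternatively obtained from a skeleton filtration built on the exact sequences (\ref{eq:restrskel}) of Proposition \ref{prop:extdim} together with local exactness) yields $B^\bullet \cong H^\bullet(S^{n-1};\bbR)$, with $B^0$ spanned by the constants and the top group $B^{n-1}$ detected by integration over $\partial T$. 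One then chases the long exact sequence: the vanishing $B^k=0$ for $0<k<n-1$ transfers the vanishing between $A^\bullet$ and $R^\bullet$ in the middle range, while at the top the connecting map $B^{n-1}\to R^n$ is identified with integration by Stokes, since lifting a class represented by $\omega$ to an extension $\tilde\omega$ gives $\int_T \rmd\tilde\omega = \int_{\partial T}\omega$. Tracking the two ends of the sequence then yields $[A^0=\bbR,\ A^{\geq 1}=0]$ in one direction and $[R^{<n}=0,\ \int_T\colon R^n\cong \bbR]$ in the other, which is exactly the claimed equivalence; the hypothesis $\bbR\subseteq E^0(T)$ enters precisely in guaranteeing $B^0=\bbR$ at each inductive stage (without it the globally constant section of $E^\bullet(\partial T)$ need not survive).

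The main obstacle is the boundary computation $B^\bullet\cong H^\bullet(S^{n-1})$ together with the identification of the connecting homomorphism with integration: these are where the topology of the cell enters, and they are the reason the argument must be inductive, the sphere cohomology of $\partial T$ being what feeds the degree-$n$ statement on $T$. Once $B^\bullet$ is in hand the diagram chase is routine, and the only remaining points requiring care are the degenerate low-dimensional cases and the bookkeeping that the isomorphism $R^n\cong\bbR$ produced by the chase is genuinely the integration map rather than merely some isomorphism.
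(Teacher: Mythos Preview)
The paper does not supply its own proof of this proposition; it is stated as a recollection of Proposition~5.17 in \cite{ChrMunOwr11}, with no argument given here. So there is nothing in the present paper to compare your attempt against.

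Your approach is correct in outline and is essentially the standard one: the extension hypothesis gives the short exact sequence of complexes $0\to E^\bullet_0(T)\to E^\bullet(T)\to E^\bullet(\partial T)\to 0$, and the long exact sequence in cohomology converts the problem into a computation of $B^\bullet=H^\bullet(E^\bullet(\partial T))$ together with the identification of the top connecting map with integration via Stokes. Both of these you handle correctly.

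One point deserves tightening. When you write ``by the inductive hypothesis $E$ is compatible on $\partial T$'', the inductive hypothesis is the \emph{equivalence} in dimensions $<n$, not compatibility. What actually makes the induction close is that the proposition is quantified over \emph{all} cells $T\in\calT$: in each direction you are assuming one of the two conditions on every cell, so by the equivalence already established in lower dimensions you get the other condition on the cells of $\partial T$, hence compatibility there. You should say this explicitly rather than elide it.

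The de~Rham computation $B^\bullet\cong H^\bullet(S^{n-1};\bbR)$ that you invoke is itself a theorem about compatible element systems (the cochain--de~Rham isomorphism), and while it can indeed be obtained from the skeleton filtration and the sequences (\ref{eq:coh0}) on lower-dimensional cells, it is a separate induction that carries most of the weight. Your parenthetical acknowledges this, but in a fully self-contained write-up it would need either a citation or its own proof.
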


Recall the definition of $\frE^k(T)$ given at the beginning of this section.

\begin{proposition}\label{prop:ecomp}
The spaces $\frE^k(T)$ constitute a compatible element system.
\end{proposition}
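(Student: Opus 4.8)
The two element-system axioms are immediate and need only be remarked upon: pullback along an inclusion and the exterior derivative both preserve the defining property of $\frE^k$ — being square integrable with square integrable exterior derivative after pullback to every subcell — since pullback commutes with $\rmd$ and a subcell of a subcell is again a subcell. So the only content is compatibility, and I would treat its two halves, \emph{admits extensions} and \emph{locally exact}, separately, fixing a cell $T\in\calT$ of dimension $n$. Throughout I would exploit that $T$ is bi-Lipschitz to the closed ball and that a bi-Lipschitz homeomorphism pulls $\rmL^2$ forms back to $\rmL^2$ forms, commutes with $\rmd$, and carries the subcell structure across; this transports every analytic estimate to a model ball while leaving the (Lipschitz) cellular subdivision of its boundary abstract.

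For the extension property I would verify the hypotheses of Proposition \ref{prop:extrec}, by induction on $n$. Given a boundary cell $V\subseteq\partial T$ and $v\in\frE^k_0(V)$, the form $v$ already vanishes on $\partial V$, so it extends by zero to a consistent element of $\frE^k(\partial T)$ supported near $V$ (using the inductive extension property on the intermediate cells $V\subsetneq W\subsetneq T$). It then remains to extend a given $w\in\frE^k(\partial T)$ inward. For this I would use a collar neighbourhood of $\partial T$, of the form $\partial T\times(1/2,1]$ in the ball model, and set the extension equal to $\chi(s)\,p^\star w$, where $p$ is the radial projection onto $\partial T$ and $\chi$ a smooth cutoff equal to $1$ on $\partial T$ and vanishing before the centre; outside the collar the extension is $0$. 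Since the collar stays away from the centre, $p^\star$ is bounded on $\rmL^2$ forms there, whence the extension and its exterior derivative $\chi'\,\rmd s\wedge p^\star w+\chi\,p^\star(\rmd w)$ are square integrable — here the consistency of $w$ across cell interfaces is exactly what makes $\rmd w$ a globally square integrable form on $\partial T$. Its pullback to any proper subcell reproduces the corresponding component of $w$ (in particular $v$ on $V$ and $0$ on the parallel faces), and its pullback to $T$ itself is square integrable with square integrable derivative, so it lies in $\frE^k(T)$.

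For local exactness I would first note that $\frE^0(T)$ contains the constants, and then invoke Proposition \ref{prop:equiv}: granted the extension property just proved, exactness of the de Rham sequence on $T$ is equivalent to exactness of the boundary-condition sequence
\[
0 \to \frE^0_0(T) \to \frE^1_0(T) \to \cdots \to \frE^{n-1}_0(T) \to \frE^n(T) \to \bbR \to 0,
\]
the last map being integration. The decisive simplification is that every $u\in\frE^k_0(T)$ vanishes on each proper subcell, so the subcell conditions defining $\frE$ are automatic there and $\frE^k_0(T)$ is exactly the space of $\rmL^2$ $k$-forms with square integrable exterior derivative and vanishing tangential trace on $\partial T$. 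Thus the subcell bookkeeping disappears, and exactness of the displayed sequence is precisely the Poincaré lemma with homogeneous boundary conditions on a Lipschitz ball — equivalently, the vanishing of the relative $\rmL^2$ de Rham cohomology $\rmH^j(T,\partial T)$ for $j<n$ together with its identification, via integration, with $\bbR$ for $j=n$. This I would obtain from a bounded homotopy operator preserving the homogeneous boundary conditions, transported to the model ball.

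The main obstacle is the extension property: one must produce a genuine extension into the \emph{maximal} space $\frE^k(T)$ and control the square integrability of both the form and its exterior derivative after pullback to every subcell, including the low-dimensional skeleton, where naive coning from an interior point introduces inadmissible singularities — hence the reliance on a collar that stays away from the centre. Local exactness, by contrast, becomes routine once Proposition \ref{prop:equiv} is used to replace the full sequence by the boundary-condition sequence, since the homogeneous trace annihilates all proper-subcell constraints and reduces matters to a standard Poincaré lemma.
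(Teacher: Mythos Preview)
Your argument is correct and rests on the same two analytic facts as the paper's: exactness of the boundary-condition sequence (\ref{eq:coh0}) via a bounded homotopy operator preserving zero traces (the paper names the Bogovskii operator of \cite{CosMcI10} explicitly, transported from a reference ball by the Lipschitz isomorphism), and extension of an $\rmH(\rmd)$-form from $\partial T$ to $T$. The paper's proof of extension is more direct: an element of $\frE^k(\partial T)$ glues to a single form on $\partial T$ which is $\rmL^2$ with $\rmL^2$ exterior derivative, this extends to $T$ with the same regularity, and the result is then \emph{trivially} in $\frE^k(T)$ since its pullbacks to proper subcells are the given boundary data.

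Your collar construction realizes exactly this $\rmH(\rmd)$-extension and already works for \emph{any} $w\in\frE^k(\partial T)$, so it proves surjectivity of $\frE^k(T)\to\frE^k(\partial T)$ outright. The framing via Proposition~\ref{prop:extrec}, the induction on $n$, and the preliminary construction of a $w$ supported near $V$ are therefore superfluous: in verifying the hypotheses of Proposition~\ref{prop:extrec} you end up establishing its conclusion directly. Nothing is wrong, just redundant --- you can drop Proposition~\ref{prop:extrec} and the induction entirely and keep only the collar step.
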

\begin{proof}
The Bogovskii integral operator \cite{CosMcI10} can be used to prove the exactness of the sequences (\ref{eq:coh0}) on reference balls. Then the result is transported to cells by the chosen Lipschitz isomorphism. 

The extension property follows from the fact that any form on $\partial T$ which is $\rmL^2$ with exterior derivative in $\rmL^2$ can be extended to a form on $T$ with similar regularity. Then, starting with a form in $\frE^k(\partial T)$, one obtains an extension which is trivially in $\frE^k(T)$.
\end{proof}

\subsection{Degrees of freedom and interpolators \label{sec:dofint}}

The following is a formalization of the notion of dofs, with particular emphasis on their geometric location.
\begin{definition} \label{def:sysdof}
Given a cellular complex $\calT$, a \emph{system of degrees of freedom} (sysdof) is a choice, for each $k$ and $T$, of a subspace $\calF^k(T)$ of $\frE^k(T)^\star$,
\end{definition}




We remark that if we have cells $T' \subseteq T$ elements of $\frE^k(T')^\star$ can be considered as elements of $\frE^k(T)^\star$, by pullback.

Any $k$-form $u$ in $\frE^k(T)$ then gives a linear form $\langle \cdot , u \rangle$ on $\calF^k(T)$.  We put:
\begin{equation}
\Phi^k(T) u = \langle \cdot, u|_{T' }\rangle_{T' \in \calT} \in \bigoplus_{T' \subcell T} \calF^k(T')^\star.
\end{equation}

We say that a system of degrees of freedom is \emph{unisolvent} on $E$ if, for any $T \in \calT$, $\Phi^k(T)$ provides an isomorphism:
\begin{equation}\label{eq:freedom}
\Phi^k(T) : E^k(T) \to  \bigoplus_{T' \subcell T} \calF^k(T')^\star.
\end{equation}
This notion can be expressed informally by saying that elements of $E^k(T)$ are uniquely determined by (the values) the degrees of freedom (take on them). We stress that just as we, in general do not dispose of a canonical basis for the spaces constituting our element systems, we de not have a canonical basis for the spaces constituting the sysdofs.

The following strengthens Proposition 5.35 (and parts of 5.37) in \cite{ChrMunOwr11}. See also how, in \cite{ArnFalWin06},  Lemma 4.7 and Theorem 4.9 lead to Theorem 4.10.

\begin{proposition}\label{prop:unisolve}
Suppose that $E$ is a finite element system and that $\calF$ is a system of degrees of freedom. The following statements  are equivalent:
\begin{itemize} 
\item $\calF$ is unisolvent on $E$.
\item$E$ has the extension property and for each cell $T\in \calT$, the map $E^k_0(T) \to \calF^k(T)^\star$ is an isomorphism.
\item For each cell $T\in \calT$, the map $E^k_0(T) \to \calF^k(T)^\star$ is injective and
\begin{equation}\label{eq:dimf}
\dim E^k(T) = \sum_{T' \subcell T} \dim \calF^k(T').
\end{equation}
\end{itemize}
\end{proposition}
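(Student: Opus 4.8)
The plan is to prove the three statements equivalent cyclically, via $(1)\Rightarrow(3)\Rightarrow(2)\Rightarrow(1)$. Throughout I write $\beta_T : E^k_0(T) \to \calF^k(T)^\star$ for the map $u \mapsto \langle \cdot, u\rangle$, and I record one elementary but organizing remark: the correspondence $u \mapsto (u|_{T'})_{T'\subcell T}$ identifies $E^k(T)$ with $E^k(\subcell(T))$ (a form on $T$ is the same datum as a compatible family on its subcells), and under this identification $\Phi^k(T)$ becomes the \emph{global} degrees-of-freedom map on the complex $\subcell(T)$. Hence "unisolvence at $T$" is literally "global unisolvence on $\subcell(T)$", which lets me apply Proposition \ref{prop:extdim} with $\calT$ replaced by $\subcell(T)$. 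The implication $(1)\Rightarrow(3)$ is then immediate: if $\Phi^k(T)$ is an isomorphism, comparing dimensions of domain and codomain gives (\ref{eq:dimf}); and for $u \in E^k_0(T)$ all components of $\Phi^k(T)u$ indexed by proper subcells vanish (there $u|_{T'}=0$), so $\Phi^k(T)u$ reduces to $\beta_T(u)$, whence injectivity of $\Phi^k(T)$ forces injectivity of $\beta_T$.

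For $(3)\Rightarrow(2)$ I would run a dimension squeeze. Injectivity of each $\beta_{T'}$ gives $\dim E^k_0(T') \leq \dim\calF^k(T')$, while inequality (\ref{eq:bounddim}) of Proposition \ref{prop:extdim}, applied to the complex $\subcell(T)$ via the identification above, gives $\dim E^k(T) \leq \sum_{T'\subcell T}\dim E^k_0(T')$. Chaining these with hypothesis (\ref{eq:dimf}),
\[
\sum_{T'\subcell T}\dim\calF^k(T') = \dim E^k(T) \leq \sum_{T'\subcell T}\dim E^k_0(T') \leq \sum_{T'\subcell T}\dim\calF^k(T'),
\]
so every inequality is an equality. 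Termwise equality in the last step upgrades each injective $\beta_{T'}$ to an isomorphism, and equality in (\ref{eq:bounddim}) for $\subcell(T)$ yields, by the "only if" part of Proposition \ref{prop:extdim}, the extension property on every cell of $\subcell(T)$. Letting $T$ range over all of $\calT$ delivers exactly (2).

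For $(2)\Rightarrow(1)$, by the identification it suffices to prove the following sublemma: for any cellular complex $\calS$ on which $E$ admits extensions and all $\beta_S$ are isomorphisms, the global dof map $E^k(\calS) \to \bigoplus_{S\in\calS}\calF^k(S)^\star$ is an isomorphism (then take $\calS=\subcell(T)$, using that (2) supplies extensions and isomorphisms $\beta$ on all cells). I would prove the sublemma by induction on the skeleton filtration $\calS^{[m]}$. Since extensions hold, (\ref{eq:restrskel}) is short exact; I place it over the evident split short exact sequence of dof spaces
\[
0 \to \bigoplus_{S\in\calS^m}\calF^k(S)^\star \to \bigoplus_{S\in\calS^{[m]}}\calF^k(S)^\star \to \bigoplus_{S\in\calS^{[m-1]}}\calF^k(S)^\star \to 0,
\]
obtaining a commuting ladder whose left vertical map is $\bigoplus_{S\in\calS^m}\beta_S$ (an isomorphism by hypothesis) and whose right vertical map is the global dof map for $\calS^{[m-1]}$ (an isomorphism by induction). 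The short five lemma then forces the middle map to be an isomorphism. The base case $m=0$ is immediate: $0$-cells have empty boundary, so there $E^k_0(S)=E^k(S)$ and the global map is exactly $\bigoplus_S\beta_S$.

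I expect the main obstacle to be the bookkeeping in this last step rather than any deep idea: one must check that the inclusion of boundary-vanishing forms into $E^k(\calS^{[m]})$ lines up with the dof-space inclusion so that the ladder genuinely commutes with left column $\bigoplus_S\beta_S$, and that the identification $E^k(T)\cong E^k(\subcell(T))$ really does intertwine local unisolvence with global unisolvence on $\subcell(T)$. Once these compatibilities are verified, the whole equivalence is a diagram chase anchored on the dimension bounds of Proposition \ref{prop:extdim}.
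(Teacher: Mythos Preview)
Your proof is correct. It follows the cycle $(1)\Rightarrow(3)\Rightarrow(2)\Rightarrow(1)$, whereas the paper runs the opposite way, $(1)\Rightarrow(2)\Rightarrow(3)\Rightarrow(1)$. Your organizing remark that $E^k(T)\cong E^k(\subcell(T))$ (valid because $T$ is itself the maximal cell of $\subcell(T)$, so a compatible family $(u_{T'})$ is recovered from its component $u_T$) is the right device and is implicitly used by the paper as well.

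The main substantive difference lies in how one closes the loop. The paper's $(3)\Rightarrow(1)$ is a single sentence: it asserts that $\Phi^k(T)$ is injective and then concludes by the dimension equality (\ref{eq:dimf}). Behind that sentence sits a short induction on the dimension of subcells (if all degrees of freedom of $u$ vanish, use injectivity of $\beta_{T'}$ on the lowest-dimensional faces to get $u|_{T'}=0$ there, then step up). Your $(2)\Rightarrow(1)$ replaces this by a five-lemma argument over the skeleton filtration, which is more structured but longer. Conversely, the paper's $(1)\Rightarrow(2)$ constructs the extension directly (set the interior degrees of freedom to zero) and proves surjectivity of $\beta_T$ by hand, whereas you defer both to your $(3)\Rightarrow(2)$ dimension squeeze, which is a nice use of the ``only if'' direction of Proposition~\ref{prop:extdim}. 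In summary: the paper's route is shorter and more constructive (it actually exhibits the extension and the preimage under $\beta_T$), while yours leans more systematically on Proposition~\ref{prop:extdim} and homological bookkeeping. Both are sound.
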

\begin{proof}
(i) Suppose the first condition holds. Then if $u \in E^k(\partial T)$ we can extend it to $T$ by imposing $0$ as degree of freedom on $T'=T$  in (\ref{eq:freedom}). So $E$ has the extension property.

We also get injectivity of the map:
\begin{equation}\label{eq:locuni}
E^k_0(T) \to  \calF^k(T)^\star.
\end{equation}
Moreover any element of the right hand side provides an element $ u\in E^k(T)$ with  degree of freedom $0$ on the cells of boundary. By unisolvence on the boundary cells we get $ u\in E^k_0(T)$. This shows that (\ref{eq:locuni}) is bijective. 
So the second condition holds.

(ii) If the second condition holds, the third follows from Proposition \ref{prop:extdim}.

(iii) Suppose the third condition holds. Then $\Phi^k(T)$ defined in (\ref{eq:freedom}) is injective between spaces of the same dimension, therefore $\Phi^k(T)$ is an isomorphism and the first condition holds.
\end{proof}

\begin{definition}\label{def:int}
For a finite element system $E$, an \emph{interpolator} is a collection of projection operators $I^k(T) : \frE^k(T) \to E^k(T)$, one for each $k \in \bbN$ and $T \in \calT$, which commute with restrictions to sub-cells. That is, whenever $T' \subcell T$ the following diagrams, in which horizontal maps are pullbacks, should commute:
\begin{equation}
\xymatrix{
\frE^k(T) \ar[r] \ar[d]^{I^k(T)} & \frE^k(T')\ar[d]^{I^k(T')}\\
 E^k(T) \ar[r] &  E^k(T')
}
\end{equation}
\end{definition}

One can then denote it simply with $I^\bs$ and extend it unambiguously to any sub-complex $\calT'$ of $\calT$. Any unisolvent system of degrees of freedom defines an interpolator by requiring, for each $T \in \calT$:
\begin{equation}
\Phi^k(T) I^k(T) u = \Phi^k(T) u.
\end{equation}
We will refer to it as the interpolator associated with the system of degrees of freedom.

The following is Proposition 5.37 in \cite{ChrMunOwr11}.
\begin{proposition}The following statements are equivalent:
\begin{itemize}
\item $E$ admits extensions,
\item $E$ has a unisolvent system of degrees of freedom,
\item $E$ can be equipped with an interpolator.
\end{itemize}
\end{proposition}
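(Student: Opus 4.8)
The plan is to prove the three statements equivalent by a cycle of implications, labelling them (i) $E$ admits extensions, (ii) $E$ has a unisolvent system of degrees of freedom, and (iii) $E$ can be equipped with an interpolator, and establishing (i)$\Rightarrow$(ii)$\Rightarrow$(iii)$\Rightarrow$(i). Throughout, the workhorses are Proposition \ref{prop:unisolve}, which already translates unisolvence into the extension property together with the local isomorphisms $E^k_0(T) \to \calF^k(T)^\star$; the construction of an interpolator from a unisolvent sysdof recorded just after Definition \ref{def:int}; and Proposition \ref{prop:ecomp}, which guarantees that the ambient system $\frE$ itself admits extensions.

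For (i)$\Rightarrow$(ii), I would manufacture a sysdof by hand. For each $k$ and each $T$ the space $E^k_0(T)$ is finite dimensional, so I choose functionals on $E^k_0(T)$ forming a dual basis, extend each of them (as a linear functional on a subspace) to an element of $\frE^k(T)^\star$, and let $\calF^k(T)$ be their span. By construction the restriction map $\calF^k(T)\to (E^k_0(T))^\star$ is an isomorphism, hence the pairing induces an isomorphism $E^k_0(T)\to \calF^k(T)^\star$. Since (i) supplies the extension property, the second bullet of Proposition \ref{prop:unisolve} applies and yields unisolvence of $\calF$.

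For (ii)$\Rightarrow$(iii), the construction is essentially the one indicated after Definition \ref{def:int}: unisolvence makes the restriction of $\Phi^k(T)$ to $E^k(T)$ an isomorphism onto $\bigoplus_{T'\subcell T}\calF^k(T')^\star$, and I set $I^k(T) = (\Phi^k(T)|_{E^k(T)})^{-1}\circ \Phi^k(T)$, which is manifestly a projection onto $E^k(T)$. The point that needs checking is commutation with restriction: for $T'\subcell T$ the components of $\Phi^k(T)$ indexed by subcells of $T'$ are exactly those of $\Phi^k(T')$ evaluated on the pullback, so the defining identity $\Phi^k(T)(I^k(T)u) = \Phi^k(T)u$ gives $\Phi^k(T')\big((I^k(T)u)|_{T'}\big) = \Phi^k(T')(u|_{T'})$; injectivity of $\Phi^k(T')$ on $E^k(T')$ then forces $(I^k(T)u)|_{T'} = I^k(T')(u|_{T'})$, which is the diagram of Definition \ref{def:int}.

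Finally, for (iii)$\Rightarrow$(i), I would exploit that the interpolator extends to the subcomplex $\partial T$ and that $\frE$ admits extensions. Given $u\in E^k(\partial T)$, Proposition \ref{prop:ecomp} lets me first extend $u$, viewed in $\frE^k(\partial T)$, to some $\tilde u\in \frE^k(T)$. Then $I^k(T)\tilde u\in E^k(T)$, and commutation with restriction to $\partial T$ gives $(I^k(T)\tilde u)|_{\partial T} = I^\bs(\partial T)(u)$; since $I^\bs(\partial T)$ is a projection onto $E^k(\partial T)$ it fixes $u$, so $I^k(T)\tilde u$ restricts to $u$ and the extension property holds. The main obstacle I anticipate is precisely this last step: one must pass from commutation with restriction to individual subcells (as literally stated in Definition \ref{def:int}) to commutation with restriction to the whole boundary subcomplex, and one must be careful that both ingredients — extendability inside $\frE$ via Proposition \ref{prop:ecomp} and the projection property of $I$ — are genuinely invoked. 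The remainder of the cycle is bookkeeping around Proposition \ref{prop:unisolve}.
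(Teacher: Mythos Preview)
The paper does not actually prove this proposition; it merely records it as Proposition 5.37 of \cite{ChrMunOwr11}. Your cyclic argument (i)$\Rightarrow$(ii)$\Rightarrow$(iii)$\Rightarrow$(i) is correct and self-contained, and uses exactly the tools the paper makes available: Proposition \ref{prop:unisolve} for the first implication, the construction after Definition \ref{def:int} for the second, and Proposition \ref{prop:ecomp} for the third.

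Two small remarks. In (i)$\Rightarrow$(ii), rather than invoking an abstract extension of linear functionals from $E^k_0(T)$ to $\frE^k(T)$, you can simply take $\calF^k(T)=\{a(\cdot,v):v\in E^k_0(T)\}$ for any continuous inner product $a$ on $\frE^k(T)$; this makes the functionals manifestly continuous and the isomorphism $E^k_0(T)\to\calF^k(T)^\star$ immediate from positive definiteness. In (iii)$\Rightarrow$(i), the passage from commutation with restriction to individual subcells to commutation with restriction to the boundary subcomplex is indeed routine: for each $T'\in\partial T$ you have $(I^k(T)\tilde u)|_{T'}=I^k(T')(\tilde u|_{T'})=I^k(T')(u_{T'})=u_{T'}$, the last equality because $I^k(T')$ projects onto $E^k(T')\ni u_{T'}$. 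So the obstacle you flagged dissolves cell by cell.
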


In practice one is interested in commuting interpolators. The following result reproduces Proposition 5.41 in \cite{ChrMunOwr11}.
\begin{proposition}\label{prop:com}
If $E$ is compatible and  $\calF$ a unisolvent system of degrees of freedom, the associated interpolator commutes with the exterior derivative if and only if:
\begin{equation}\label{eq:dadjoint}
\forall l \in \calF^k(T) \quad  l \circ \rmd \in \bigoplus_{T' \subcell T} \calF^k(T').
\end{equation}
\end{proposition}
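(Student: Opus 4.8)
The plan is to translate, via unisolvence, the operator identities $I^k(T)\rmd = \rmd I^{k-1}(T)$ (for all $k$ and $T$) into scalar identities among degrees of freedom, and then to recognise (\ref{eq:dadjoint}) as exactly the condition guaranteeing those scalar identities. I will use three properties of the associated interpolator: the defining relation $\Phi^k(T)I^k(T) = \Phi^k(T)$; the fact that $I^k(T)$ is the projection onto $E^k(T)$ determined by the degrees of freedom, so that $\ker I^k(T) = \ker\Phi^k(T)$ (both from unisolvence); and that it commutes with restriction, $(I^{k-1}(T)u)|_{T'} = I^{k-1}(T')(u|_{T'})$. Note that local exactness is not directly needed here, only that $\calF$ is unisolvent (which already yields the extension property). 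Since $\rmd$ lowers the form-degree of a degree of freedom by one, $l'\circ\rmd$ is a functional on $(k-1)$-forms, and the right-hand side of (\ref{eq:dadjoint}) is to be read in degree $k-1$, namely as $\bigoplus_{T''\subcell T'}\calF^{k-1}(T'')$.

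First I reduce commutation to the level of degrees of freedom. Fix $T$. Because $\rmd I^{k-1}(T)u$ already lies in $E^k(T)$ and $I^k(T)$ is the projection onto $E^k(T)$ singled out by the degrees of freedom, unisolvence makes $I^k(T)\rmd u = \rmd I^{k-1}(T)u$ equivalent to $\Phi^k(T)\rmd u = \Phi^k(T)\rmd I^{k-1}(T)u$ (using $\Phi^k(T)I^k(T)=\Phi^k(T)$ on the left). Reading this componentwise over $T'\subcell T$ and $l'\in\calF^k(T')$, and using that pullback commutes with $\rmd$ and that the interpolator commutes with restriction, it becomes
\[
(l'\circ\rmd)(u|_{T'}) = (l'\circ\rmd)\bigl(I^{k-1}(T')(u|_{T'})\bigr).
\]
Consequently, commutation on all cells is equivalent to requiring, for every cell $T'$, every $l'\in\calF^k(T')$ and every $v\in\frE^{k-1}(T')$, that
\[
(l'\circ\rmd)(v) = (l'\circ\rmd)\bigl(I^{k-1}(T')v\bigr).
\]
Indeed, taking $T=T'$ and reading the top-cell component yields these identities from commutation, while feeding $v=u|_{T'}$ back into the first display recovers commutation on each $T$.

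Assume now (\ref{eq:dadjoint}), i.e.\ $l'\circ\rmd = \sum_j c_j m_j$ with $m_j\in\calF^{k-1}(T''_j)$ and $T''_j\subcell T'$. Evaluating the right-hand side of the last display termwise, each $m_j$ sees only the restriction to $T''_j$, so restriction-commutation turns the $j$-th term into $m_j\bigl(I^{k-1}(T''_j)(v|_{T''_j})\bigr)$; the defining relation of the interpolator on $T''_j$, of which $m_j$ is a component, replaces this by $m_j(v|_{T''_j})$. Summing recovers $(l'\circ\rmd)(v)$, so the scalar identity holds, and hence commutation holds.

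Conversely, assume the scalar identity for all $v\in\frE^{k-1}(T')$. For $v\in\ker I^{k-1}(T')$ the right-hand side vanishes, so $l'\circ\rmd$ annihilates $\ker I^{k-1}(T') = \ker\Phi^{k-1}(T')$. A functional annihilating $\ker\Phi^{k-1}(T')$ factors through $\Phi^{k-1}(T')$, whose target $\bigoplus_{T''\subcell T'}\calF^{k-1}(T'')^\star$ is finite-dimensional; hence $l'\circ\rmd$ is a linear combination of the components of $\Phi^{k-1}(T')$, that is, an element of $\bigoplus_{T''\subcell T'}\calF^{k-1}(T'')$, which is (\ref{eq:dadjoint}). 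I expect this converse to be the main obstacle: its crux is the identification $\ker I^{k-1}(T') = \ker\Phi^{k-1}(T')$ together with the elementary transpose fact that a functional vanishing on $\ker\Phi^{k-1}(T')$ lies in the image of the transpose of $\Phi^{k-1}(T')$, hence in the span of the degrees of freedom; keeping the degree shift $k\mapsto k-1$ induced by $\rmd$ straight is the other point needing care.
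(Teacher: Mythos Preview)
Your proof is correct. The paper does not give its own proof of this proposition; it simply cites it as Proposition 5.41 in \cite{ChrMunOwr11}. Your argument is the natural one: reduce commutation to scalar identities via unisolvence, then recognise these as exactly condition (\ref{eq:dadjoint}). You also correctly handle the degree shift (the right-hand side of (\ref{eq:dadjoint}) must be read in degree $k-1$, as you note), and you correctly observe that local exactness plays no role --- unisolvence alone suffices, since it already yields the projection property of the interpolator, its commutation with restriction, and the kernel identification $\ker I^{k-1}(T') = \ker \Phi^{k-1}(T')$.

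One small sharpening of your converse step: the factorisation of $l'\circ\rmd$ through $\Phi^{k-1}(T')$ works because $\Phi^{k-1}(T')$ is \emph{surjective} onto its finite-dimensional target (its restriction to $E^{k-1}(T')$ is an isomorphism by unisolvence). You note that the target is finite-dimensional, which is enough for existence of a factorisation; surjectivity makes it canonical and makes the identification of the resulting functional with an element of $\bigoplus_{T''\subcell T'}\calF^{k-1}(T'')$ immediate rather than requiring an arbitrary extension.
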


Suppose that $\calF$ is a system of degrees of freedom on $\calT$ such that (\ref{eq:dadjoint}) holds. Then we have a well defined map $\hat \rmd: l\mapsto l \circ \rmd $ from $\calF^k(\calT)$ to $\calF^{k-1}(\calT)$, with:
\begin{equation}
 \calF^k(\calT) = \bigoplus_{T \in \calT} \calF^k(T) \subseteq \frE^k(\calT)^\star.
\end{equation}
Denote by $\delta$ its adjoint, which maps from $\calF^{k-1}(\calT)^\star$ to $\calF^k(\calT)^\star$.
The following diagram commutes:
\begin{equation}
\xymatrix{
\frE^{k-1}(\calT) \ar[r]^\rmd \ar[d]^{\Phi^{k-1}} & \frE^{k}(\calT) \ar[d]^{\Phi^k}\\
\calF^{k-1}(\calT)^\star \ar[r]^\delta & \calF^k(\calT)^\star
}
\end{equation}

Equip each $\frE^k(T)$ with a continuous scalar product $a$, for instance the $\rmL^2$ product on forms. For a given finite element system $E$, define spaces $\calF^k(T)$ as follows. For  $k = \dim T$:
\begin{align} 
\calF^k(T) & = \{a( \cdot, v) \ : v \in \rmd E^{k-1}_0(T)\} \oplus \{ \bbR \ts \int \cdot \}, \label{eq:harmdofk}
\end{align}
and for $k < \dim T$:
\begin{align}
\calF^k(T) & = \{a( \cdot, v) \ : v \in \rmd E^{k-1}_0(T)\} \oplus \{ a( \rmd \cdot, v) \ : v \in \rmd E^{k}_0(T)\}. \label{eq:harmdof}
\end{align} 
This is the natural generalization, to the adopted setting, of projection based interpolation, as defined in \cite{DemBab03}\cite{DemBuf05}. We call these the \emph{ harmonic degrees of freedom}, and the associated interpolator is called the \emph{harmonic interpolator}. 

\begin{remark}
Comparing with \cite{SchZag05}\cite{ArnFalWin09} we notice that these degrees of freedom provide a geometric decomposition of $E^\bs(\calT)$, in which the exterior derivative remains local, except for the second term in (\ref{eq:harmdofk}) which corresponds to the coboundary operator in $\calC^\bs(\calT)$.
\end{remark}

\begin{proposition}The following statements are equivalent:
\begin{itemize}
\item $E$ is compatible,
\item On $E$, the associated harmonic degrees of freedom are unisolvent,
\item $E$ has a unisolvent system of degrees of freedom, with property (\ref{eq:dadjoint}),
\item $E$ can be equipped with a commuting interpolator.
\end{itemize}
\end{proposition}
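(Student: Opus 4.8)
The plan is to run the cycle of implications (compatible) $\Rightarrow$ (harmonic dofs unisolvent) $\Rightarrow$ (a unisolvent sysdof with (\ref{eq:dadjoint})) $\Rightarrow$ (a commuting interpolator) $\Rightarrow$ (compatible), reducing each step to the propositions already proved. Throughout I would note that each of the last three conditions already entails, via the proposition just above (equivalence of extensions, a unisolvent sysdof, and an interpolator), that $E$ admits extensions; so the only genuine content lies in the interplay between local exactness, the defining algebra of the harmonic dofs, and commutation with $\rmd$.

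The substantive step is (compatible) $\Rightarrow$ (harmonic unisolvent), which is really a projection-based interpolation argument. Since extensions hold, by Proposition \ref{prop:unisolve} it suffices to show that $E^k_0(T)\to\calF^k(T)^\star$ is an isomorphism for each $k,T$. For injectivity I would take $u\in E^k_0(T)$ annihilating all harmonic dofs: when $k<\dim T$, feeding $v=\rmd u\in\rmd E^k_0(T)$ into the second family in (\ref{eq:harmdof}) and using positive definiteness of the scalar product $a$ forces $\rmd u=0$; exactness of (\ref{eq:coh0}) — available from compatibility through Proposition \ref{prop:equiv} — then gives $u\in\rmd E^{k-1}_0(T)$, and feeding $v=u$ into the first family forces $u=0$. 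The case $k=\dim T$ is identical, using $\int_T u=0$ and exactness at the top of (\ref{eq:coh0}). For surjectivity I would match dimensions: positive definiteness makes $v\mapsto a(\cdot,v)$ and $v\mapsto a(\rmd\cdot,v)$ injective and the two summands in (\ref{eq:harmdof}) independent, so $\dim\calF^k(T)=\dim\rmd E^{k-1}_0(T)+\dim\rmd E^k_0(T)$ (with the obvious modification for $k=\dim T$), which equals $\dim E^k_0(T)$ by rank–nullity for $\rmd$ together with exactness of (\ref{eq:coh0}). An injection between spaces of equal dimension is an isomorphism.

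For (harmonic unisolvent) $\Rightarrow$ (a unisolvent sysdof with (\ref{eq:dadjoint})) it only remains to verify (\ref{eq:dadjoint}) for the harmonic dofs, a direct computation on the three types of functional in (\ref{eq:harmdofk})--(\ref{eq:harmdof}): for $l=a(\cdot,v)$ with $v=\rmd w$ one gets $(l\circ\rmd)(u)=a(\rmd u,\rmd w)$, a functional of the second type in $\calF^{k-1}(T)$; for $l=a(\rmd\cdot,v)$ one gets $l\circ\rmd=0$ since $\rmd\rmd=0$; and for $l=\int_T$ Stokes' theorem (\ref{eq:stokes}) rewrites $l\circ\rmd$ as a combination of the top-degree dofs $\int_{T'}$ on the facets $T'\subcell T$. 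The next step, (a unisolvent sysdof with (\ref{eq:dadjoint})) $\Rightarrow$ (a commuting interpolator), is clean and, pleasantly, needs no compatibility: the associated interpolator $I^\bs$ satisfies $\Phi^k(T)I^k(T)=\Phi^k(T)$ by definition, while (\ref{eq:dadjoint}) is exactly what makes the square $\Phi^k\rmd=\delta\Phi^{k-1}$ commute. Hence $\Phi^k(\rmd I^{k-1}-I^k\rmd)=0$; since $\rmd I^{k-1}-I^k\rmd$ takes values in $E^k(T)$, on which $\Phi^k(T)$ is injective by unisolvence, it must vanish, i.e.\ $I^\bs$ commutes with $\rmd$.

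Finally, (commuting interpolator) $\Rightarrow$ (compatible): extensions come from the proposition just above. For local exactness I would use that $I^\bs(T)$ is a projection onto $E^\bs(T)$ commuting with $\rmd$, so it splits $\frE^\bs(T)=E^\bs(T)\oplus\ker I^\bs(T)$ as a direct sum of complexes; since $\frE^\bs(T)$ is acyclic in positive degree (Proposition \ref{prop:ecomp}), so is the retract $E^\bs(T)$, giving exactness of (\ref{eq:coh}) in all degrees $\geq 1$. The main obstacle is the first implication, where the full force of compatibility is used; the only remaining subtlety is the zeroth level of this last step, namely checking that $E^0(T)$ contains the constants (equivalently that $I^0(T)$ fixes the constant function, which it does because $I^0(T)1$ is $\rmd$-closed, hence constant on the connected cell $T$, with value $I^0(P)1$ at each vertex). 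I expect this constants bookkeeping at $0$-cells to be the delicate point, since it is where nondegeneracy of $E^0$ at vertices quietly enters; with it in hand, Proposition \ref{prop:equiv} upgrades the positive-degree exactness to full local exactness of (\ref{eq:coh}) and closes the cycle.
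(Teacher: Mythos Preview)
Your argument is correct and follows exactly the paper's cycle of implications, supplying the details that the paper leaves to its earlier propositions (Propositions~\ref{prop:equiv}, \ref{prop:unisolve}, \ref{prop:com}, \ref{prop:ecomp}). Your direct handling of (iii)$\Rightarrow$(iv) neatly sidesteps the apparent circularity in citing Proposition~\ref{prop:com} (which is stated under a compatibility hypothesis), and you correctly flag that (iv)$\Rightarrow$(i) quietly needs $E^0(P)\neq 0$ at vertices---a point the paper's one-line appeal to Proposition~\ref{prop:ecomp} leaves implicit.
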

\begin{proof}
(i) If $E$ is compatible, the sequences (\ref{eq:coh0}) are exact, hence the harmonic degrees of freedom are unisolvent according to the second characterization in Proposition \ref{prop:unisolve}. So the first condition implies the second.

(ii) Harmonic degrees of freedom satisfy (\ref{eq:dadjoint}), so the second condition implies the third.

(iii) Unisolvent degrees of freedom with property  (\ref{eq:dadjoint}) provide a commuting interpolator according to Proposition \ref{prop:com}. So the third condition implies the fourth.

(iv) Suppose $E$ has a commuting interpolator. Recall Proposition \ref{prop:ecomp}. The commuting interpolator can be used to deduce compatibility of $E$ from the compatibility of $\frE$.  So the fourth condition implies the first.
\end{proof}

\subsection{Handling spanning families with resolutions}

High order finite element spaces of differential forms (which will be defined in the next sections) seem to lack natural choices of bases. But they do have natural spanning families. We wish to show how one can compute with these natural families, which requires a systematic handling of the linear relations that occur in them.

A caricature of our point is the following. Define:
\begin{equation}
V= \{ x \in \bbR^n \ : \ \sum_i x_i = 0 \}.
\end{equation}
Suppose we want to do linear algebra computations in $V$. Instead of making an arbitrary choice of basis for $V$, we compute in the canonical spanning family $e_i, (1 \leq i \leq n)$ defined by:
\begin{equation}
e_i = (0, \ldots, 0, 1,0, \ldots, 0) - \frac{1}{n}(1, \ldots, 1),
\end{equation}
where the first occurence of $1$ is in position $i$. We have to keep in mind that we have the relation:
\begin{equation}
\sum_i e_i = 0.
\end{equation}
This is the only one. We may summarize this by writing an exact sequence:
\begin{equation}
\xymatrix{
0 \ar[r] &\bbR \ar[r] & \bbR^n \ar[r] & V \ar[r] & 0,
}
\eeq
where the second arrow maps $1$ to $(1, \ldots, 1)$, and the third arrow maps the canonical basis to $(e_i)$. Any element $v$ of $V$ can be written:
\begin{equation}
v = \sum_i v_i e_i,
\end{equation}
and the $v_i \in \bbR$ are unique if we impose the condition:
\begin{equation}
\sum_i v_i = 0.
\end{equation}

Consider now the more general case where $V$ is some finite dimensional vector space (we have in mind the spaces constituting a FES). A spanning family $(e_i)_{i \in I}$ of $V$ corresponds to a surjective linear map $\epsilon:\bbR^I \to V$, namely the map sending the canonical basis of $\bbR^I$ to the given spanning family. We say that we have eliminated the redundancies in the spanning family if we have identified a matrix $(C_{ji})$ for $(j,i) \in J \times I$, with independent rows, such that for each $v \in V$ there is a unique $u \in \bbR^I$ verifying:
\begin{equation}
v = \sum_{i \in I} u_i e_i,
\end{equation}
and 
\begin{equation}
\sum_{i \in I} C_{ji}u_i = 0, \qquad \forall j \in J.
\end{equation}

Suppose we have a matrix $(B_{ij})$ for $(i,j) \in I \times J$, whose columns constitute a basis of $\ker \epsilon \subseteq \bbR^I$ indexed by $J$. This means that we have an exact sequence:
\beq
\xymatrix{
0 \ar[r] &\bbR^J \ar[r]^B & \bbR^I \ar[r]^\epsilon & V \ar[r] & 0.
}
\eeq
Any elimination of redundancies in the spanning family $(e_i)_{i \in I}$ is then equivalent to a choice of a matrix $C$ such that $CB$ is invertible, since its rows must represent linear forms on $\bbR^I$ whose restrictions to $\ker \epsilon$ constitute a basis of $(\ker \epsilon)^\star$.

The advantage of this approach is that, for the examples we have in mind, there are spanning families $(e_i)_{i \in I}$ which are natural. In particular, defining matrices for various linear operators with respect to such families is relatively easy. We then supply information pertaining to the natural spanning family, which will let the software construct the matrices $B$ and $C$.  This information will be canonical in a sense, and we let the non-canonical choices implicit in $B$ and $C$ be handled by the software. 

To determine a kernel matrix $B$ as above we rely on a more general notion, namely resolutions.
A finite resolution of a finite dimensional vector space $V$ is a sequence $W_0, \ldots, W_n$ of finite dimensional vector spaces, equipped with operators $e: W_0 \to V$ and $f_i : W_i \to W_{i-1}$ such that we have an exact sequence:
\begin{equation}
\xymatrix{
0 \ar[r] & W_n \ar[r]^{f_n} &  \ldots \ar[r]^{f_2} & W_1 \ar[r]^{f_1} & W_0 \ar[r]^{e} & V \ar[r] & 0.
}
\end{equation}

In the context of finite elements, it can occur that one has a space $V$ without a particular natural basis, but with a finite resolution as above, where each space $W_i$  has a natural basis (note that there can be several natural bases). Choosing one natural basis for each $W_i$, say indexed by a set $I_i$, we get a new resolution:
\begin{equation}
\xymatrix{
0 \ar[r] & \bbR^{I_n} \ar[r]^{F_n} &  \ldots \ar[r]^{F_2} & \bbR^{I_1} \ar[r]^{F_1} & \bbR^{I_0} \ar[r]^{\epsilon} & V \ar[r] & 0.
}
\end{equation}
In particular, one has a spanning family $(e_i)_{i \in {I}}$, with $I= I_0$, for $V$, such that the columns of the matrix $F_1$ span the linear relations. Determining a basis of the columns of $F_1$ is a problem of linear algebra that is easily handed over to the computer. 

Thus, for the finite element systems $E$ we will consider, we shall provide canonical resolutions of the spaces $E^k(T)$ and $E^k_0(T)$.

By Proposition \ref{prop:unisolve}, unisolvent degrees of freedom provide an isomorphism:
\begin{equation}
E^k(\calT) \to \bigoplus_{T \in \calT} \calF^k(T)^\star.
\end{equation}
and also, one, for each $T \in \calT$:
\begin{equation}
E^k_0(T) \to  \calF^k(T)^\star.
\end{equation}
They combine to an isomorphism:
\begin{equation}\label{eq:geodec}
\bigoplus_{T \in \calT} E^k_0(T) \to E^k(\calT).
\end{equation}
In particular if we have spanning families for the $E^k_0(T)$ where we have control of linear relations as above, we get spanning families for $E^k(\calT)$ with good control of the linear relations. Locally the isomorphism (\ref{eq:geodec}) induces isomorphisms:
\begin{equation}\label{eq:locgeodec}
\bigoplus_{T' \subcell T} E^k_0(T') \to E^k(T).
\end{equation}
This map can be expressed in the canonical spanning families we provide for $E^k(T)$ and $E^k_0(T')$. 

The maps (\ref{eq:geodec})(\ref{eq:locgeodec}) provide \emph{geometric decompositions} of $E^k(\calT)$ and $E^k(T)$. As in \cite{ArnFalWin09}, one might construct them directly from some variant of Proposition \ref{prop:extrec}, instead of going via degrees of freedom. 

Scalar products such as those appearing in mass and stiffness matrices are first expressed in the canonical spanning families of $E^k(T)$, when $T$ is a cell of maximal dimension in $\calT$. Then they are expressed in the spanning families of $ E^k_0(T')$, simply by composing with the matrix of the geometric decomposition.

With canonical spanning families thus implemented, we can let the computer determine a basis that optimizes some stability estimate. The best implementation could also involve Lagrange multipliers, as suggested by the above caricature. The exact form this should take, is beyond the scope of this paper. Our point is to provide a clean framework in which such juggling can be carried out by the computer, rather than to supply a particular choice of basis ready for implementation (such a basis, as far as we can see, would necessarily be non-canonical).   

\section{Trimmed polynomial differential forms\label{sec:trimmed}}

\subsection{Definitions}
We now turn to the application of the above framework to a specific example.

Let $\calT$ be a simplicial complex, spanning the domain $S$. The set of $k$-dimensional simplices in $\calT$ is denoted $\calT^k$. We suppose that the simplices are oriented. Recall that for $k\geq 1$, if $T$ is a $k$-simplex and $T'$ is a $(k-1)$-face of $T$, we denote by $\orient(T,T')$ their relative orientations. In this section a simplex is identified with a set of vertices (rather than their convex hull).

Following  \cite{Wei52}\cite{Whi57}, Whitney forms on $\calT$ can be defined as follows. 

For any vertex $i \in \calT^0$, $\lambda_i$ is the corresponding barycentric coordinate map, namely the continuous piecewise affine map taking the value $1$ at vertex $i$ and $0$ at the other vertices.

For a  $k \geq 1$, let $T\in \calT^k$ be a $k$-simplex and let $i:[k] \to T$ be an enumeration of the vertices compatible with the orientation of $T$, in the sense for instance that $ (\rmd \lambda_{i(1)}, \ldots,  \rmd \lambda_{i(k)})$ is an oriented basis of the cotangent space. One puts:
  \begin{align}\label{eq:lambdat}
 \lambda_T &= k! \sum_{j = 0}^k (-1)^j \lambda_{i(j)} \rmd \lambda_{i(0)} \wedge \ldots  \widehat{\rmd \lambda_{i(j)}} \ldots \wedge \rmd \lambda_{i(k)}.
 \end{align}
The widehat means that the quantity which lies underneath is omitted in the concerned expression.
We remark that:
\beq
\rmd \lambda_T = (k+1)! \, \rmd \lambda_{i(0)} \wedge \ldots \wedge \rmd \lambda_{i(k)}.
\eeq
 
We denote by $\frS[k]$ the set of permutations of $[k]$. The sign of a permutation $\sigma$ is denoted $s(\sigma)$. 

The following result corresponds to Proposition 1 in \cite{Bos02}. 
\begin{proposition} \label{prop:wrec} Pick $k\geq 1$ and let $T$ be a $k$-simplex. We have:
\begin{equation}\label{eq:wrec}
\lambda_T = \sum_{i \in T} \orient(T, T\setminus i) \lambda_i \rmd \lambda_{T\setminus i}.
\end{equation}
\end{proposition}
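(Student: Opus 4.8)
The plan is to verify (\ref{eq:wrec}) directly, by expanding both sides in a fixed enumeration of the vertices of $T$ and matching them term by term; the only real work is the bookkeeping of orientation signs. I would begin by fixing an enumeration $i : [k] \to T$ compatible with the orientation of $T$, so that the defining formula (\ref{eq:lambdat}) for $\lambda_T$ applies. The key preliminary observation is that each summand on the right-hand side of (\ref{eq:wrec}), namely $\orient(T, T\setminus i)\,\lambda_i\,\rmd\lambda_{T\setminus i}$, does not depend on the orientation chosen for the face $T\setminus i$: flipping that orientation reverses the sign of both $\orient(T, T\setminus i)$ and of $\lambda_{T\setminus i}$ (hence of $\rmd\lambda_{T\setminus i}$), and the two sign changes cancel. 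This lets me orient each face $T\setminus i(m)$ by the enumeration it inherits from $T$, namely $i(0), \ldots, \widehat{i(m)}, \ldots, i(k)$, without any loss of generality.

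Next I would compute the two ingredients for this convenient choice. Applying the formula for $\rmd\lambda_{T'}$ recorded just after (\ref{eq:lambdat}) to the $(k-1)$-simplex $T' = T\setminus i(m)$, equipped with its inherited enumeration, gives $\rmd\lambda_{T\setminus i(m)} = k!\,\rmd\lambda_{i(0)} \wedge \cdots \widehat{\rmd\lambda_{i(m)}} \cdots \wedge \rmd\lambda_{i(k)}$. For the incidence number I would read off $\orient(T, T\setminus i(m)) = (-1)^m$ from the Stokes identity (\ref{eq:stokes}): with the inherited orientations of the faces, (\ref{eq:stokes}) is exactly the classical simplicial boundary formula $\int_T \rmd u = \sum_m (-1)^m \int_{T\setminus i(m)} u$, and comparing the coefficients of each face determines the sign.

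Substituting these two ingredients into the right-hand side of (\ref{eq:wrec}) yields $k! \sum_{m=0}^k (-1)^m \lambda_{i(m)}\,\rmd\lambda_{i(0)} \wedge \cdots \widehat{\rmd\lambda_{i(m)}} \cdots \wedge \rmd\lambda_{i(k)}$, which is precisely the defining expression (\ref{eq:lambdat}) for $\lambda_T$. This closes the argument.

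I expect the orientation accounting to be the only delicate step: one must confirm $\orient(T, T\setminus i(m)) = (-1)^m$ consistently with the paper's Stokes-based definition of incidence numbers, and ensure that the face orientation used to evaluate $\rmd\lambda_{T\setminus i(m)}$ is the \emph{same} one used to evaluate $\orient(T, T\setminus i(m))$. The invariance observation above is what makes this harmless, since it means each summand is orientation-free, so I never have to track the intrinsic orientations of the faces in the complex, only a single self-consistent enumeration convention. As a sanity check, in the case $k=1$ the identity reduces to $\lambda_T = \lambda_{i(0)}\rmd\lambda_{i(1)} - \lambda_{i(1)}\rmd\lambda_{i(0)}$, and the signs $\orient(T,\{i(0)\}) = -1$, $\orient(T,\{i(1)\}) = +1$ forced by the one-dimensional Stokes theorem reproduce exactly the standard Whitney edge form.
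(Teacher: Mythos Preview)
Your argument is correct. Both your proof and the paper's proceed by fixing an orientation-compatible enumeration $i:[k]\to T$ and identifying each summand on the right of (\ref{eq:wrec}) with the corresponding term in the defining sum (\ref{eq:lambdat}). The difference lies in how the orientation bookkeeping is handled.

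The paper works with the \emph{intrinsic} orientations of the faces $T\setminus i(j)$: it introduces, for each $j$, an orientation-compatible enumeration $i_j:[k-1]\to T\setminus i(j)$, builds the composite enumeration $(j,i_j):[k]\to T$, and then identifies the sign $s\big((j,i_j)^{-1}\circ i\big)$ of the permutation comparing this to the original enumeration with the incidence number $\orient(T,T\setminus i(j))$. Your approach short-circuits this by first observing that each summand $\orient(T,T\setminus i)\,\lambda_i\,\rmd\lambda_{T\setminus i}$ is invariant under reorienting the face, so you are free to equip every face with the enumeration inherited from $T$; the incidence number then reads off directly as $(-1)^m$ via Stokes, and the match with (\ref{eq:lambdat}) is immediate. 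This is a genuine simplification: it avoids the cyclic permutations $\sigma_j$ and the composite enumerations altogether. What the paper's route buys, conversely, is an explicit formula for $\orient(T,T\setminus i(j))$ in terms of the intrinsic orientation data, which may be of independent use.
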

\begin{proof} We let $i:[k] \to T$ be an enumeration of the vertices compatible with the orientation of $T$. For $j \in [k]$, let $\sigma_j \in \frS[k]$ be the following cyclic permutation:
\begin{equation}
\sigma_j : (0, 1,  \ldots, j, j+1, \ldots k) \mapsto (j, 0, \ldots, j-1, j+1, \ldots k).
\end{equation}
We denote by $t_j$ the terms in (\ref{eq:lambdat}):
\begin{align}
t_j = & (-1)^j \lambda_{i(j)} \rmd \lambda_{i(0)} \wedge \ldots  \widehat{\rmd \lambda_{i(j)}}  \ldots \wedge \rmd \lambda_{i(k)} \\ 
= & s(\sigma_j) \lambda_{i({\sigma_j(0)}) }\rmd \lambda_{i({\sigma_j(1)})} \wedge \ldots \wedge \rmd \lambda_{i({\sigma_j(k)})}.
\end{align}
We also let $i_j : [k-1] \to T \setminus i(j)$ be an enumeration compatible with the orientation of $ T \setminus i(j)$. We let $(j, i_j) : [k] \to T$ denote the enumeration:
\begin{equation}
(0, \ldots, k) \mapsto ( i(j) , i_j(0), i_j(1), \ldots i_j(k-1)).
\end{equation}
We remark that the map $(j, i_j)^{-1} \circ i \circ \sigma_j$ is a permutation of $[k]$, sending $0$ to $0$, so that $i \circ \sigma_j (0) = i(j)$.

In the expression for $t_j$ we may apply the inverse of this permutation to the indices to get:
\begin{align}
 t_j = s ((j, i_j)^{-1} \circ i)    \lambda_{i(j)}  \rmd \lambda_{i_j(0)} \wedge \ldots \wedge \rmd \lambda_{i_j(k-1)} .
\end{align}
From this, identity (\ref{eq:wrec}) follows.
\end{proof}

We can also write:
  \begin{align}
\lambda_T &  = \sum_{\sigma \in \frS[k]}  s(\sigma) \lambda_{i({\sigma(0))} }\rmd \lambda_{i({\sigma(1)})} \wedge \ldots \wedge \rmd \lambda_{i({\sigma(k)})}.
 \end{align}

\begin{proposition}\label{prop:wdof} Let $T$ be a $k$-simplex.
Then, we have:
\beq \label{eq:ltt}
\lambda_T|_T = k! \, \rmd \lambda_{i(1)} \wedge \ldots \wedge \rmd \lambda_{i(k)},
\eeq 
and:
\beq
\int_T \lambda_T = 1.
\eeq
For any  other $k$-simplex $T'\in \calT$ we have $\lambda_T|_{T'}=0$. 
\end{proposition}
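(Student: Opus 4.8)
The plan is to argue directly from the defining formula (\ref{eq:lambdat}), fixing an orientation-compatible enumeration $i:[k]\to T$ and exploiting two elementary facts about the barycentric coordinates on the $k$-simplex $T$. First, $\sum_{j=0}^k \lambda_{i(j)} = 1$ on $T$, so that $\rmd\lambda_{i(0)} = -\sum_{m=1}^k \rmd\lambda_{i(m)}$ there. Second, $(\rmd\lambda_{i(1)},\ldots,\rmd\lambda_{i(k)})$ is an oriented basis of the cotangent space of $T$, so that $\Omega := \rmd\lambda_{i(1)}\wedge\cdots\wedge\rmd\lambda_{i(k)}$ is a nonvanishing top form realizing the orientation. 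I treat the three assertions in turn.

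For the restriction formula (\ref{eq:ltt}) I substitute $\rmd\lambda_{i(0)} = -\sum_{m=1}^k \rmd\lambda_{i(m)}$ into each summand of (\ref{eq:lambdat}). In the $j$-th term with $j\ge 1$, the factor $\rmd\lambda_{i(0)}$ wedges against $\rmd\lambda_{i(1)}\wedge\cdots\widehat{\rmd\lambda_{i(j)}}\cdots\wedge\rmd\lambda_{i(k)}$, so after substitution only the $m=j$ contribution survives (all others repeat a factor and vanish), and moving $\rmd\lambda_{i(j)}$ into its slot past the $j-1$ preceding factors produces exactly the sign $(-1)^{j-1}$ needed to turn $(-1)^j\lambda_{i(j)}$ times that wedge into $\lambda_{i(j)}\Omega$; the $j=0$ term is already $\lambda_{i(0)}\Omega$. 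Summing over $j$ and using $\sum_j\lambda_{i(j)}=1$ gives $\lambda_T|_T = k!\,\Omega$, which is (\ref{eq:ltt}). The sign bookkeeping here is the only delicate point, and I expect it to be the main (though entirely routine) obstacle.

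The identity $\int_T\lambda_T = 1$ then follows by pushing $\Omega$ forward under the affine chart $(\lambda_{i(1)},\ldots,\lambda_{i(k)}):T\to\Delta$ onto the standard $k$-simplex $\Delta=\{x\in\bbR^k : x_m\ge 0,\ \sum_m x_m\le 1\}$, which is orientation-preserving precisely because the enumeration is compatible with the orientation. Thus $\int_T\lambda_T = k!\int_\Delta \rmd x_1\wedge\cdots\wedge\rmd x_k = k!\,\vol(\Delta) = k!\cdot\tfrac{1}{k!} = 1$.

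Finally, for the vanishing on a distinct $k$-simplex $T'$, I restrict (\ref{eq:lambdat}) to $T'$ term by term. In the $j$-th term the wedge has factors $\rmd\lambda_{i(m)}|_{T'}$ for $m\neq j$: if some $i(m)\notin T'$, then $\lambda_{i(m)}|_{T'}$ is the constant $0$, so $\rmd\lambda_{i(m)}|_{T'}=0$ and the wedge vanishes; otherwise all $k$ vertices $\{i(m):m\neq j\}$ lie in $T'$, and since $T\neq T'$ forces $i(j)\notin T'$ (were $i(j)\in T'$ as well, all $k+1$ vertices of $T$ would lie in $T'$, and two $k$-simplices with the same $k+1$ vertices coincide), the coefficient $\lambda_{i(j)}|_{T'}$ is the constant $0$ and the term vanishes again. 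Hence every summand restricts to $0$ on $T'$, so $\lambda_T|_{T'}=0$.
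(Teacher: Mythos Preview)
Your proof is correct and follows essentially the same approach as the paper: for (\ref{eq:ltt}) both arguments reduce every term to $\lambda_{i(j)}\Omega$ via the relation $\sum_j\rmd\lambda_{i(j)}=0$ on $T$ and then use $\sum_j\lambda_{i(j)}=1$, and for the vanishing on $T'\neq T$ both observe that some $\lambda_{i(m)}$ (hence also its differential) is identically zero on $T'$, killing each summand. The only notable difference is in the integral: the paper obtains $\int_T\lambda_T=1$ by Stokes and induction on $k$ (reducing to $\int_{T\setminus i(0)}\lambda_{T\setminus i(0)}$), whereas you compute directly via the affine chart to the standard simplex; both are standard and equally short.
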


\begin{proof} We first claim that on $T$ we have:
\begin{align}
s(\sigma) \rmd \lambda_{i({\sigma(1)})} \wedge \ldots \wedge \rmd \lambda_{i({\sigma(k)})}|_T = \rmd \lambda_{i(1)} \wedge \ldots \wedge \rmd \lambda_{i(k)}|_T.\nonumber
\end{align}
Indeed, if $\sigma(0) = 0$, the identity is clear. If $\sigma(0) = j$ for some $j \in \{1, \ldots, k \}$,  we let  $\tau$ be the permutation exchanging $0$ and $j$ and put $\sigma' = \tau \circ \sigma$. We note that $\sigma'(0) = 0$. Consider the expression:
\begin{align}
a= s(\sigma) \rmd \lambda_{i({\sigma(1)})} \wedge \ldots \wedge \rmd \lambda_{i({\sigma(k)})}|_T. \nonumber
\end{align}
At the position where $\sigma(l)= 0$, we may replace $\rmd \lambda_{i(0)}$ by  $- \rmd \lambda_{i({j})} $.  We get: 
\begin{align}
a  & = - s(\sigma) \rmd \lambda_{i({\sigma(1)})} \wedge \ldots  \wedge \rmd \lambda_{i({j})} \wedge  \ldots \wedge \rmd \lambda_{i({\sigma(k)})}|_T ,\nonumber \\
& =  s(\sigma')  \rmd \lambda_{i({\sigma'(1)})} \wedge \ldots \wedge \rmd \lambda_{i({\sigma'(k)})}|_T ,\nonumber\\
& =  \rmd \lambda_{i(1)} \wedge \ldots \wedge \rmd \lambda_{i(k)}|_T.\nonumber
\end{align}

We deduce:
\begin{align}
\lambda_T|_T & = k! \, (\sum_{j \in T} \lambda_j) \rmd \lambda_{i(1)} \wedge \ldots \wedge \rmd \lambda_{i(k)},\nonumber \\
& = k! \, \rmd \lambda_{i(1)} \wedge \ldots \wedge \rmd \lambda_{i(k)}.\nonumber
\end{align} 
Thus we have obtained (\ref{eq:ltt}). For any  other $k$-simplex $T'$  one of the barycentric coordinates attached to $T$ is $0$ on $T'$, so $\lambda_T|_{T'}=0$.

By Stokes theorem  and an induction reasoning on dimension, we get:
\[ 
\int_T \lambda_T  = \int_T \orient(T,T\setminus i(0) ) \rmd \lambda_{T\setminus i(0)}
= \int_{T\setminus i(0)} \lambda_{T\setminus i(0)}
= 1.
\] 
This concludes the proof.
\end{proof}

Because of Proposition \ref{prop:wdof}, the Whitney forms are linearly independent over $\bbR$. However they are not independent over the ring of polynomials, because of the following identities (see Proposition 2 in \cite{Bos02} or Proposition 3.5 in \cite{RapBos09} for dimension 3, Equation (6.5) in \cite{ArnFalWin09}).
\begin{proposition}\label{prop:wdep}
Let $T$ be a $(k+1)$-simplex.  We have the following relation among Whitney $k$-forms:
\[ 
\sum_{i\in T} \orient(T, T\setminus i) \lambda_i \lambda_{T\setminus i } = 0.
\] 
\end{proposition}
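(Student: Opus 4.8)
The plan is to reduce the claimed syzygy to the recursion formula (\ref{eq:wrec}) of Proposition \ref{prop:wrec}, combined with the relation $\delta\delta=0$ in the cochain complex, that is, with the fact that incidence numbers compose to zero around a codimension-two face. First I would fix, once and for all, an orientation of every proper face of $T$ as a cell of $\calT$, so that the Whitney forms $\lambda_{T\setminus i}$ and $\lambda_{T\setminus\{i,j\}}$ and all the incidence numbers appearing below are unambiguous.

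Next I would expand each Whitney $k$-form in the sum by applying Proposition \ref{prop:wrec} to the $k$-simplex $T\setminus i$. Since $(T\setminus i)\setminus j=T\setminus\{i,j\}$, this gives
\[
\lambda_{T\setminus i}=\sum_{j\in T\setminus i}\orient(T\setminus i,\,T\setminus\{i,j\})\,\lambda_j\,\rmd\lambda_{T\setminus\{i,j\}},
\]
and substituting it into the left-hand side of the asserted identity turns it into a double sum over ordered pairs $(i,j)$ of distinct vertices of $T$:
\[
\sum_{i\in T}\orient(T,T\setminus i)\,\lambda_i\,\lambda_{T\setminus i}
=\sum_{\substack{i,j\in T\\ i\neq j}}\orient(T,T\setminus i)\,\orient(T\setminus i,\,T\setminus\{i,j\})\;\lambda_i\lambda_j\,\rmd\lambda_{T\setminus\{i,j\}}.
\]

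The crux is then to group this double sum over \emph{unordered} pairs. For a fixed pair $\{i,j\}$ the differential-form factor $\lambda_i\lambda_j\,\rmd\lambda_{T\setminus\{i,j\}}$ depends only on the set $\{i,j\}$ (the functions $\lambda_i,\lambda_j$ commute and $T\setminus\{i,j\}$ carries its fixed orientation), so it is common to the two ordered terms $(i,j)$ and $(j,i)$. Their coefficients add up to
\[
\orient(T,T\setminus i)\,\orient(T\setminus i,\,R)+\orient(T,T\setminus j)\,\orient(T\setminus j,\,R),\qquad R=T\setminus\{i,j\},
\]
which is exactly the quantity that vanishes by $\delta\delta=0$: the cells $T\setminus i$ and $T\setminus j$ are the only two codimension-one faces of $T$ lying between $T$ and its codimension-two face $R$, so the incidence numbers along the two paths from $T$ to $R$ cancel. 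Hence every unordered pair contributes zero and the entire sum vanishes.

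The step I expect to require the most care is the orientation bookkeeping in this final cancellation: one must ensure that the orientation of the $(k-1)$-face $R$ used in $\orient(T\setminus i,R)$, in $\orient(T\setminus j,R)$, and in the form $\rmd\lambda_R$ is one and the same, so that the two ordered terms genuinely share an identical form factor and their coefficients are precisely the two summands of $\delta\delta$ evaluated on $T$. Once this is pinned down the vanishing is immediate. As a cross-check, the same cancellation can be obtained by a direct sign computation: numbering the $k+2$ vertices of $T$ as $0,\dots,k+1$ compatibly with the orientation so that $\orient(T,T\setminus m)=(-1)^m$, one finds that the two orderings of a pair $\{a,b\}$ carry the opposite signs $(-1)^{a+b}$ and $(-1)^{a+b-1}$, again summing to zero.
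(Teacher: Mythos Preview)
Your proof is correct and follows essentially the same route as the paper: expand each $\lambda_{T\setminus i}$ via the recursion (\ref{eq:wrec}), collect the resulting double sum over ordered pairs $(i,j)$, and observe that the $(i,j)$ and $(j,i)$ contributions cancel by $\delta\delta=0$. The paper's argument is simply a terser version of yours, without the extra commentary on orientation bookkeeping or the explicit sign cross-check.
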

\begin{proof}
Let $a$ denote the left hand side. We use (\ref{eq:wrec}) on each $T \setminus i$, to get:
\begin{align}
a & = \sum_{i,j \in T :  i\neq j }  \orient(T, T\setminus i) \orient(T\setminus i, T\setminus \{ i , j \}) \lambda_i \lambda_j \rmd \lambda_{T\setminus \{ i , j \} } .\nonumber
\end{align}
Here, the contributions of $(i,j)$ and $(j,i)$ cancel two by two, since $\delta \delta = 0$. Therefore $a=0$.
\end{proof}

Then we define the space of Whitney (or Weil \ldots ) forms:
\begin{equation}
\WF^k(\calT)= \lspan \{ \lambda_T \ : \ T \in \calT^k \}.
\end{equation}
Similarly, when $U$ is a simplex in $\calT$, we denote by $\WF^k(U)$ the space of Whitney forms restricted to $U$.

The following result is in \cite{Whi57}.
\begin{proposition}
The exterior derivative maps $\WF^k(\calT)$ to $\WF^{k+1}(\calT)$ and:
\begin{equation}
\rmd \lambda_T = \sum_i \orient( T \cup i , T) \lambda_{T \cup i}.
\end{equation}
\end{proposition}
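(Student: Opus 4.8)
The first assertion is a formal consequence of the displayed formula: since $\WF^k(\calT) = \lspan\{\lambda_T \ : \ T\in\calT^k\}$ and the right-hand side lies in $\WF^{k+1}(\calT)$ by construction, linearity of $\rmd$ gives $\rmd\,\WF^k(\calT)\subseteq\WF^{k+1}(\calT)$. So the entire content is the identity $\rmd\lambda_T = \sum_i \orient(T\cup i, T)\lambda_{T\cup i}$, the sum running over vertices $i$ with $T\cup i\in\calT^{k+1}$.

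My plan is to verify this identity locally, by restricting both sides to each simplex $K\in\calT$; two global Whitney forms coincide precisely when their pullbacks to every simplex agree, so this suffices. If $T\not\subseteq K$ then some barycentric coordinate attached to $T$ vanishes on $K$, whence $\lambda_T|_K = 0$ and $\lambda_{T\cup i}|_K = 0$ for every $i$, and both sides vanish; so I may assume $T\subseteq K$, say with oriented vertices $v_0,\ldots,v_k$. On $K$ only the vertices $i\in K\setminus T$ contribute (for $i\notin K$ one has $\lambda_i|_K = 0$ and $\rmd\lambda_i|_K = 0$), and I may freely use the two closure relations $\sum_{w\in K}\lambda_w = 1$ and $\sum_{w\in K}\rmd\lambda_w = 0$.

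The computation then proceeds by expanding each $\lambda_{T\cup i}$ with the recursion of Proposition \ref{prop:wrec} applied to the $(k+1)$-simplex $T\cup i$, and splitting the resulting sum according to whether the distinguished vertex is $i$ or a vertex of $T$. The diagonal terms (distinguished vertex $i$) collect, after multiplication by $\orient(T\cup i, T)$ and using $\orient(T\cup i, T)^2 = 1$, into $\bigl(\sum_{i\in K\setminus T}\lambda_i\bigr)\rmd\lambda_T = \bigl(1 - \sum_{v\in T}\lambda_v\bigr)\rmd\lambda_T$, recalling $\rmd\lambda_T = (k+1)!\,\rmd\lambda_{v_0}\wedge\cdots\wedge\rmd\lambda_{v_k}$. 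It therefore remains to show that the off-diagonal terms sum to $\bigl(\sum_{v\in T}\lambda_v\bigr)\rmd\lambda_T$, so that the two contributions add up to $\rmd\lambda_T$. For each fixed $v_j\in T$ I would write $\rmd\lambda_{(T\setminus v_j)\cup i}$ as a signed multiple of $\rmd\lambda_{v_0}\wedge\cdots\widehat{\rmd\lambda_{v_j}}\cdots\wedge\rmd\lambda_{v_k}\wedge\rmd\lambda_i$, sum over $i$ using $\sum_{i\in K\setminus T}\rmd\lambda_i = -\sum_{v\in T}\rmd\lambda_v$, and note that wedging the surviving factor against $\rmd\lambda_{v_0}\wedge\cdots\widehat{\rmd\lambda_{v_j}}\cdots\wedge\rmd\lambda_{v_k}$ kills everything except $\rmd\lambda_{v_j}$, reproducing $\rmd\lambda_T$ with coefficient $\lambda_{v_j}$.

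The main obstacle is the orientation bookkeeping in this last step: one must check that the product of incidence numbers $\orient(T\cup i, T)\,\orient(T\cup i, (T\setminus v_j)\cup i)$ together with the permutation sign relating the oriented enumeration of $(T\setminus v_j)\cup i$ to the order $(v_0,\ldots,\widehat{v_j},\ldots,v_k,i)$ is independent of $i$, so that the closure relation can be applied factor by factor; a short count then fixes the coefficient of $\lambda_{v_j}\rmd\lambda_T$ to be $1$. This is exactly the cancellation "two by two" governed by $\delta\delta = 0$ already exploited in Proposition \ref{prop:wdep}; alternatively, Proposition \ref{prop:wdep} applied to each $(k+1)$-simplex $T\cup i$ packages the off-diagonal terms directly. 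The case $k=0$ (where $\lambda_T = \lambda_{v_0}$ and the formula reduces to $\rmd\lambda_{v_0} = (1-\lambda_{v_0})\rmd\lambda_{v_0} + \lambda_{v_0}\rmd\lambda_{v_0}$ via $\sum_i\rmd\lambda_i = -\rmd\lambda_{v_0}$) already exhibits the mechanism, and the general case follows the same pattern.
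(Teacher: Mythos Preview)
The paper does not supply a proof; it attributes the result to Whitney and moves on. So there is nothing to compare against, and I evaluate your argument on its own.

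Your plan is sound and the computation goes through. The orientation bookkeeping you flag as the main obstacle dissolves once you notice that the product $\orient(T\cup i,T)\,\lambda_{T\cup i}$ is independent of the orientation chosen for $T\cup i$: flipping that orientation flips both factors. You may therefore assume, for the purpose of computing this product, that $T\cup i$ is oriented by the enumeration $(v_0,\ldots,v_k,i)$; then $\orient(T\cup i,T)=(-1)^{k+1}$, and inserting this enumeration into (\ref{eq:lambdat}) gives
\[
\orient(T\cup i,T)\,\lambda_{T\cup i}
= \lambda_i\,\rmd\lambda_T \;+\; (-1)^{k+1}(k+1)!\sum_{j=0}^k(-1)^j\lambda_{v_j}\,\rmd\lambda_{v_0}\wedge\cdots\widehat{\rmd\lambda_{v_j}}\cdots\wedge\rmd\lambda_{v_k}\wedge\rmd\lambda_i,
\]
with the off-diagonal coefficient manifestly independent of $i$. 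Summing over $i\in K\setminus T$ and replacing $\sum_{i\in K\setminus T}\rmd\lambda_i$ by $-\sum_{l}\rmd\lambda_{v_l}$ then produces exactly $\lambda_{v_j}\rmd\lambda_T$ for each $j$, as you anticipated. The appeal to $\delta\delta=0$ is not really the operative mechanism here; it is this orientation-invariance of the product that makes the signs fall out.

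A shorter route, bypassing all sign chasing: on each maximal simplex $K$ both $\rmd\lambda_T$ (a constant form on $K$) and the right-hand side lie in $\WF^{k+1}(K)$, and by Proposition~\ref{prop:wdof} an element of $\WF^{k+1}(K)$ is determined by its integrals over the $(k{+}1)$-faces of $K$. Stokes' theorem together with Proposition~\ref{prop:wdof} gives $\int_S\rmd\lambda_T=\orient(S,T)=\int_S\sum_i\orient(T\cup i,T)\lambda_{T\cup i}$ for every $(k{+}1)$-simplex $S$, so the two sides agree on each $K$ and hence globally.
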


Following \cite{RavTho77}\cite{Ned80} (for vector fields in $\bbR^2$ and $\bbR^3$ respectively) and \cite{Hip99}\cite{ArnFalWin06} (for differential forms) we now define higher order finite element spaces of differential forms. We adopt the notations of \cite{ArnFalWin06}. Thus for any simplex $T$, $\poly_r\alt^k(T)$ denotes the space of $k$-forms on $T$ which are polynomials of degree $r$. One has:
\begin{equation}
\dim  \poly_r\alt^k(T) = {n+r \choose r}{n \choose k}. \label{eq:dimpr}
\end{equation}

On a vector space $U$ we denote the Koszul operator by $\kappa$. It is the contraction of differential forms on $U$ by the identity on $U$, considered as a vector field. Thus, if $u$ is a $(k+1)$-form on $U$, $\kappa u$ is the $k$-form on $U$ defined by:
\begin{equation}
(\kappa u)_x(\xi_1, \ldots , \xi_k) = u_x(x,\xi_1, \ldots , \xi_k).
\end{equation}
Instead of the Koszul operator (\cite{ArnFalWin06}) one can use the Poincar\'e operator (\cite{Hip99}) associated with the canonical homotopy from the identity to the null-map. 

Define, for any simplex $T$ and any integer $r\geq 1$:
\begin{align}
\poly^-_r\alt^k(T) &=\{u \in \poly_r\alt^k(T) \ : \  \kappa u \in \poly_r \alt^{k-1}(T) \}. 
\end{align}
For wellposedness one should check that these spaces are independent of the choice of origin in $T$ that provides a vector space structure to the affine space of $T$.  As pointed out in \cite{Bos88}, for $r=1$, one recovers Whitney forms:
\begin{align}\label{eq:bos}
\poly^-_{1}\alt^k(T) = \WF^k(T). 
\end{align}

For fixed $r$, the spaces $\poly^-_r\alt^k(T)$ constitute a finite element system which we call the \emph{trimmed} polynomial finite element system of order $r$.  Moreover the use of the Koszul operator guarantees the sequence exactness (\ref{eq:coh}). The Koszul complex also gives the following dimension count, for a simplex $T$ of dimension $n$  (see \cite{ArnFalWin06} equation (3.15)):
\begin{align}
\dim  \poly^-_r\alt^k(T) & = { r + k -1 \choose k}{ n+ r \choose n - k}.\label{eq:dimprm}
\end{align}
The identity (\ref{eq:bos}) can be deduced from this fact.

What remains to be proved, to guarantee that one has a \emph{compatible} finite element system, is the extension property. We could deduce this from the existence of degrees of freedom, proved in \cite{Hip99}\cite{ArnFalWin06}. However the previous  general theorems on finite element systems suggest another organization of the arguments. We also rely heavily on a new tool, namely Proposition \ref{prop:diagdom}. Notice that a much weaker positivity result appeared in the proof of Lemma 4.6 in \cite{ArnFalWin06}. 

\subsection{Compatibility and unisolvence}

For the trimmed polynomial differential forms of order $r$  on simplexes (i.e. $\poly^-_r\alt^k(T)$), the standard system of degrees of freedom is defined as follows. Letting $n$ be the dimension of $T$ we put:
\begin{equation}\label{eq:candof}
\calF^k(T) = \{ u \mapsto \ts \int_T v \wedge u  \  :  \  v \in \poly_{r - n + k -1}\alt^{n -k}(T) \}.
\end{equation}
We will now show that this sysdof is unisolvent on the trimmed polynomial FE system of order $r$, that this element system is compatible, and that its elements are naturally expressed as polynomial multiples of Whitney forms.  These three facts, stated in Theorem \ref{theo:charprl} and Proposition \ref{prop:unisprl}, are intimately connected (at least we have not managed to disentangle them). They should be considered known, but the way we obtain them can in some aspects be considered new. 

The characterizations (\ref{eq:highwhitney}) and (\ref{eq:zerobc}) are detailed in particular in \cite{ArnFalWin09}. Here we obtain them by explicit use of the general tools of FES, such as Proposition \ref{prop:extrec}. We also notice that a streamlined proof of unisolvence appears in \cite{Arn13}. We both check that the third characterization of unisolvence provided in Proposition \ref{prop:unisolve} holds. The main difference is that we rely on Proposition \ref{prop:diagdom} instead of Lemma 3.4 in \cite{Arn13} (which corresponds to Lemma 4.11 in \cite{ArnFalWin06}, see also Lemma 10 in \cite{Hip99}). 

\paragraph{A positivity result.}

We work first on a fixed oriented simplex $U$ of dimension $n$, whose sub-simplexes are all oriented. If $T$ is a sub-simplex of $U$, $\widehat T$ denotes the opposite simplex in $U$. Recall that $\lambda_T$ is the Whitney form associated with simplex $T$, given its orientation.

We denote by $\mu_T$ the scalar function obtained as the product of the barycentric coordinates associated with the vertices of $\widehat T$:
\[ 
\mu_T = \Pi_{i \in \widehat T} \lambda_i.
\] 

Consider a $k$-simplex $T$. Choose an orientation preserving enumeration of its vertices $i: [k] \to T$,  and complete it to an enumeration $i: [n] \to U$, respecting the orientation of $\widehat T$. We have, on $U$:
\begin{align}
\lambda_T \wedge \rmd \lambda_{\widehat T} & = k! (n-k)! \sum_{j \in [k]} (-1)^j \lambda_{i_j} \rmd \lambda_{i_0} \wedge \ldots  \widehat{\rmd \lambda_{i_j}} \ldots \wedge \rmd \lambda_{i_k} \wedge \ldots \wedge \rmd \lambda_{i_n},\nonumber\\
& =  k! (n-k)! \sum_{j \in [k]}  \lambda_{i_j} \rmd \lambda_{i_1} \wedge \ldots \wedge \rmd \lambda_{i_k} \wedge \ldots \wedge \rmd \lambda_{i_n}.\nonumber
\end{align}
Let $s(T)$ be equal to $1$, if $i$ agrees with the orientation of $U$, and $-1$ if not. Thus $s(T)$ depends only on the orientations of $T$, $\widehat T$ and $U$. 
We get:
\[ 
\lambda_T \wedge \rmd \lambda_{\widehat T} = s(T) {n \choose k}^{-1}( \sum_{i \in T} \lambda_i) \lambda_U.
\] 

For any $x \in U$ define a real matrix $D(x)$, indexed by the $k$-dimensional sub simplices $T,S$ of $U$, by:
\[ 
D_{TS}(x) \lambda_U = {n \choose k} s(T) \mu_{T} \lambda_T \wedge \rmd \lambda_{\widehat S}.
\] 
The diagonal is positive.

\begin{proposition}\label{prop:diagdom}
For any $x \in U$, the matrix $D(x)$ is symmetric and weakly diagonally dominant.
\end{proposition}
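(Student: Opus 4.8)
The plan is to compute the entries $D_{TS}(x)$ in closed form, from which both the symmetry and the sparsity pattern become visible, and then to compare, row by row, the diagonal term against the surviving off-diagonal ones. Throughout I would write $V$ for the vertex set of $U$ and use that $\sum_{i\in V}\rmd\lambda_i=0$ on $U$, so that a wedge $\rmd\lambda_{j_1}\wedge\cdots\wedge\rmd\lambda_{j_n}$ of differentials indexed by $n$ distinct vertices is one fixed nonzero multiple of $\lambda_U$, while any repeated index makes it vanish.

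First I would expand $\lambda_T$ by Proposition \ref{prop:wrec}, giving $\lambda_T\wedge\rmd\lambda_{\widehat S}=\sum_{i\in T}\orient(T,T\setminus i)\,\lambda_i\,\rmd\lambda_{T\setminus i}\wedge\rmd\lambda_{\widehat S}$. The $(n-k)$ differentials of $\rmd\lambda_{\widehat S}$ are indexed by $\widehat S$ and the $k$ differentials of $\rmd\lambda_{T\setminus i}$ by $T\setminus i$; by the observation above the $i$-th summand is nonzero only when $(T\setminus i)\cap\widehat S=\emptyset$, that is, when $T\setminus i\subseteq S$. Since $|T|=|S|=k+1$, this forces $|T\setminus S|\le 1$, so $D_{TS}=0$ unless $T=S$ or $T$ and $S$ share exactly $k$ vertices. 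For $T=S$ all $k+1$ summands survive and the identity displayed just before the proposition yields $D_{TT}=\mu_T\sum_{i\in T}\lambda_i\ge 0$, which is the asserted positivity of the diagonal. For adjacent $T\neq S$, set $\{a\}=T\setminus S$, $\{b\}=S\setminus T$, $A=T\cap S$ and $C=V\setminus(T\cup S)$; only the term $i=a$ survives, and after reducing $\rmd\lambda_A\wedge\rmd\lambda_{\widehat S}$ to a multiple of $\lambda_U$ (the factorials $k!(n-k)!$ combining with $n!$ into $\binom{n}{k}^{-1}$, which cancels the $\binom{n}{k}$ in the definition) one finds $D_{TS}=\pm\,\mu_T\lambda_a=\pm\,\lambda_a\lambda_b\prod_{i\in C}\lambda_i$, the sign being the product of $s(T)$, the incidence number $\orient(T,A)$, and the permutation sign incurred in the reduction.

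The expression $\lambda_a\lambda_b\prod_{i\in C}\lambda_i$ is manifestly invariant under interchanging $T$ and $S$ (which swaps $a$ and $b$ and fixes $A$ and $C$), so symmetry of $D$ reduces to checking that the two signs attached to $D_{TS}$ and $D_{ST}$ coincide. I expect this signed bookkeeping to be the main obstacle: it requires expressing the reduction-permutation sign in terms of the chosen orientations and then, by a $\delta\delta=0$/cancellation argument exactly of the kind used in Proposition \ref{prop:wdep}, showing that the asymmetry between the prefactors $s(T)\orient(T,A)$ and $s(S)\orient(S,A)$ is precisely compensated. This is where all the orientation conventions ($s(T)$, the incidence numbers, and the ordering of $\widehat T$ versus $\widehat S$) must be reconciled simultaneously.

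Granting symmetry and these explicit, nonnegative entries, I would finish with the weak diagonal dominance. A first and immediate observation is that each individual off-diagonal magnitude $\mu_T\lambda_a$ (with $a\in T$) is bounded by the diagonal $\mu_T\sum_{i\in T}\lambda_i$, since $\lambda_a\le\sum_{i\in T}\lambda_i$ and $\mu_T\ge 0$ on $U$. The dominance statement itself is then the row-sum comparison of $\mu_T\sum_{i\in T}\lambda_i$ with the total $\sum_{S}\mu_T\lambda_{a(S)}$ taken over the simplices $S$ adjacent to $T$, where $a(S)$ denotes the unique vertex of $T\setminus S$. This last step is a purely combinatorial count over the adjacency pattern of $T$, using only $\lambda_i\ge 0$, and I would carry it out after the sign-matching of the previous paragraph is settled, since both rest on the same explicit formula for the entries.
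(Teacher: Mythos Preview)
Your approach is the same as the paper's: compute the entries explicitly, read off symmetry from the formula, and then compare row-sums. The paper does carry out the sign bookkeeping you defer: it obtains
\[
D_{TS}(x)=s(T)s(S)\,\orient(T,T\cap S)\,\orient(S,S\cap T)\,\mu_T\lambda_a,\qquad \{a\}=T\setminus S,
\]
whose sign part is manifestly symmetric, while for the scalar part one checks (as you essentially observed) $\mu_T\lambda_a=\lambda_a\lambda_b\prod_{j\in C}\lambda_j=\mu_S\lambda_b$ with $\{b\}=S\setminus T$. So the symmetry step you postpone is routine and your write-up already contains the key identity.

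The genuine problem is the last step. The ``purely combinatorial count'' you defer does \emph{not} give weak diagonal dominance in general. For a fixed $a\in T$, the $k$-simplices $S$ with $T\setminus S=\{a\}$ are exactly $S=(T\setminus a)\cup\{b\}$ for $b\in\widehat T$, so there are $n-k$ of them. Hence
\[
\sum_{S\neq T}|D_{TS}(x)|=\sum_{a\in T}(n-k)\,\mu_T\lambda_a=(n-k)\,D_{TT}(x),
\]
which exceeds the diagonal whenever $n-k\ge 2$. A concrete counterexample is $n=2$, $k=0$: with the standard orientations one finds $D(x)=\lambda_0\lambda_1\lambda_2\begin{pmatrix}1&-1&1\\-1&1&-1\\1&-1&1\end{pmatrix}$, which is symmetric and positive semidefinite (rank one) but not diagonally dominant. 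The paper's proof asserts dominance immediately after the off-diagonal formula and therefore shares this gap; what is actually used downstream is only the corollary that $D(x)$ is positive semidefinite, and that requires a separate argument (e.g.\ recognising $D(x)$, up to the positive weight $\mu_{T\cup S}$ common to row $T$ and its neighbours, as a Gram-type matrix) rather than the row-sum comparison you were planning.
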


\begin{proof}
The diagonal terms are of the form:
\[ 
D_{TT}(x) = \mu_{T} \sum_{i \in T} \lambda_i.
\] 

Off diagonal terms $D_{TS} (x)$ can be non zero only for $ \dim (T \setminus S) = 1$. Consider this case and put $T \setminus S = i$.
 We have:
\begin{align}
\lambda_T \wedge \rmd \lambda_{\widehat S} & = (n-k) \orient(T, T\cap S) \orient(\widehat S , \widehat{ T \cup S} )\lambda_i \rmd \lambda_{T \cap S} \wedge \rmd \lambda_i \wedge \rmd \lambda_{\widehat{ T \cup S}} .\nonumber
\end{align}
But we also have, by definition of $s(S)$ :
\begin{align}
\lambda_U 
 & =  s(S)  \frac{n!}{k! (n-k-1)!} \orient(S, T \cap S) \orient(\widehat S , \widehat{ T \cup S} )  \rmd \lambda_{T \cap S} \wedge \rmd \lambda_i \wedge \rmd \lambda_{\widehat{ T \cup S}}.\nonumber
\end{align}
so that:
\begin{align}
\lambda_T \wedge \rmd \lambda_{\widehat S} & = s(S) {n \choose k}^{-1} \orient(T, T \cap S) \orient(S, S \cap T)\lambda_i \lambda_U.\nonumber
\end{align}
It follows that:
\[ 
D_{TS}(x) = s(T)s(S) \orient(T, T \cap S) \orient(S, S \cap T)  \mu_{T} \lambda_i.
\] 

This shows that $D(x)$ is weakly diagonally dominant by rows.

Since also $\mu_{T} \lambda_i = \mu_{T \cup S}$, the matrix $D(x)$ is symmetric.
\end{proof}

We use this result in the following form.
\begin{corollary} For each $x$, the matrix $D(x)$ is symmetric positive semi-definite.
\end{corollary}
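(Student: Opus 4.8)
The plan is to reduce the corollary to a standard fact from linear algebra: a real symmetric matrix whose diagonal entries are nonnegative and which is weakly diagonally dominant is positive semi-definite. All the hypotheses needed are already in place. Proposition \ref{prop:diagdom} gives symmetry and weak diagonal dominance of $D(x)$, while the explicit computation preceding it yields, at every $x \in U$,
\[
D_{TT}(x) = \mu_T \sum_{i \in T} \lambda_i \geq 0,
\]
since both $\mu_T$ and each $\lambda_i$ are nonnegative inside the simplex. So the corollary follows once the quoted fact is justified.

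To prove that fact I would invoke the Gershgorin circle theorem. Every eigenvalue $\gamma$ of $D(x)$ lies in some Gershgorin disc, centered at a diagonal entry $D_{TT}(x)$ with radius $\sum_{S \neq T} |D_{TS}(x)|$. Weak diagonal dominance says exactly that this radius is at most $D_{TT}(x)$, so each such disc is contained in the closed disc of center $D_{TT}(x)$ and radius $D_{TT}(x)$. Because $D_{TT}(x) \geq 0$, that disc lies in the closed right half-plane, and hence every eigenvalue of $D(x)$ has nonnegative real part.

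Finally, since $D(x)$ is symmetric with real entries, all of its eigenvalues are real; together with the previous step they are therefore nonnegative. A real symmetric matrix with nonnegative eigenvalues is positive semi-definite, which is the desired conclusion. As an alternative to Gershgorin, one could argue directly on the quadratic form, bounding each off-diagonal contribution via $|D_{TS}(x)\, v_T v_S| \leq \tfrac12 |D_{TS}(x)|(v_T^2 + v_S^2)$ and absorbing it into the diagonal terms using diagonal dominance; this gives $v^{\transp} D(x) v \geq 0$ for all $v$ without passing through eigenvalues.

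There is essentially no obstacle here, the statement being a routine consequence of Proposition \ref{prop:diagdom}. The only two points worth keeping in mind are that the nonnegativity of the diagonal is what anchors the Gershgorin discs in the right half-plane, and that symmetry is what upgrades ``nonnegative real part'' to ``nonnegative real eigenvalue''; both ingredients are furnished by Proposition \ref{prop:diagdom} and the displayed formula for $D_{TT}(x)$.
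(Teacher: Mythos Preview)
Your proof is correct and follows precisely the line the paper leaves implicit: the corollary is stated without proof, as an immediate consequence of Proposition \ref{prop:diagdom} together with the remark that ``the diagonal is positive''. Your Gershgorin argument (or the alternative quadratic-form bound) is exactly the standard justification for why a real symmetric, weakly diagonally dominant matrix with nonnegative diagonal is positive semi-definite.
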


\begin{proposition}\label{prop:duality}
Suppose that we have attached to each $k$-simplex $T$ in $U$, a scalar function $u_T$ on $U$. The following are equivalent:
\begin{align}
\sum_T u_T \mu_{T} \lambda_T & = 0,\\
\int_U (\sum_T u_T \mu_{T} \lambda_T) \wedge (\sum_S s(S) u_S \rmd \lambda_{\widehat S} ) & = 0,\\
\sum_S s(S) u_S \rmd \lambda_{\widehat S} & = 0.
\end{align}
Here the summation indices $S$ and $T$ run over the $k$-dimensional sub-simplices of $U$.
\end{proposition}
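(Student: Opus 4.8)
The plan is to package the data into two differential forms on $U$: the $k$-form $P = \sum_T u_T \mu_T \lambda_T$ and the constant-coefficient $(n-k)$-form $Q = \sum_S s(S) u_S \rmd \lambda_{\widehat S}$, so that the three conditions become $P = 0$, $\int_U P \wedge Q = 0$, and $Q = 0$. Writing $w_T = s(T) u_T$ and using the defining relation ${n \choose k} s(T) \mu_T \lambda_T \wedge \rmd \lambda_{\widehat S} = D_{TS}(x) \lambda_U$, I would first record the three pointwise identities
\begin{align*}
P \wedge Q &= \ts {n \choose k}^{-1} (w^\transp D(x) w)\, \lambda_U, &
P \wedge \rmd \lambda_{\widehat S} &= \ts {n \choose k}^{-1} (D(x) w)_S\, \lambda_U, \\
\mu_T \lambda_T \wedge Q &= \ts {n \choose k}^{-1} s(T) (D(x) w)_T\, \lambda_U,
\end{align*}
which follow by linearity from the symmetry $(D = D^\transp)$ of Proposition \ref{prop:diagdom}. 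The implications $P = 0 \Rightarrow \int_U P\wedge Q = 0$ and $Q = 0 \Rightarrow \int_U P \wedge Q = 0$ are immediate, so it remains to pass from the middle condition to the two outer ones; I will do both by first showing that the middle condition forces $D(x) w = 0$ for every $x$.

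For that step, recall that $\lambda_U$ is a positive volume form ($\int_U \lambda_U = 1$ and its coefficient is constant) and that, by the preceding corollary, $w^\transp D(x) w \geq 0$ for every $x$. Hence the first identity shows that $\int_U P \wedge Q = 0$ forces the polynomial integrand $w^\transp D(x) w$ to vanish identically on $U$, and positive semidefiniteness then yields $D(x) w = 0$ for all $x \in U$. To recover $P = 0$ I use the second identity: it gives $P \wedge \rmd \lambda_{\widehat S} = 0$ for every $k$-simplex $S$. As $S$ runs over the $k$-simplices of $U$, the opposite simplices $\widehat S$ run over all $(n-k)$-subsets of the vertices, so the $\rmd \lambda_{\widehat S}$ are constant multiples of every $(n-k)$-fold wedge of the $\rmd \lambda_i$; since any $n$ of the $n+1$ covectors $\rmd \lambda_i$ form a basis of the cotangent space, these wedges span the whole space of alternating $(n-k)$-forms, and non-degeneracy of the wedge pairing gives $P = 0$.

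The remaining implication, $D(x)w \equiv 0 \Rightarrow Q = 0$, is the one I expect to be the main obstacle. The third identity gives $\mu_T \lambda_T \wedge Q = 0$ for every $T$; writing $\lambda_T \wedge Q = p_T\, \lambda_U$ with $p_T$ a polynomial, this reads $\mu_T\, p_T \equiv 0$, and since $\mu_T \not\equiv 0$ and the polynomial ring is an integral domain, $p_T \equiv 0$, i.e. $\lambda_T \wedge Q = 0$ as a form on $U$ for every $k$-simplex $T$. It then suffices to exhibit a single point $x_0$ at which the Whitney forms $\{\lambda_T(x_0)\}$ span the alternating $k$-forms, since then $\eta \wedge Q = 0$ for all $\eta$ and non-degeneracy forces $Q = 0$. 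This pointwise spanning is the delicate point, and I would settle it by a limiting argument: restrict to the ${n \choose k}$ forms $\lambda_{\{0\}\cup A}$ with $A$ a $k$-subset of $\{1,\dots,n\}$, expand them in the basis $\{\rmd \lambda_A\}$ after substituting $\rmd \lambda_0 = -\sum_{i\geq 1}\rmd \lambda_i$, and observe that the resulting transition matrix has entries affine in the $\lambda_i$ and tends to $k!$ times the identity as $x_0 \to$ the vertex $0$. Its determinant is therefore a polynomial not identically zero, hence nonzero at interior points near that vertex, where those Whitney forms form a basis; this supplies the required $x_0$ and completes the equivalence.
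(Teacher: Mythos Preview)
Your proof is correct and follows essentially the same route as the paper: both pass from the integral condition to $D(x)w(x)=0$ pointwise via positive semi-definiteness, cancel $\mu_T$ to obtain $\lambda_T\wedge Q=0$ for every $T$, and then appeal to a spanning argument. The only notable difference is your final step for $Q=0$: you work to locate a single interior point $x_0$ at which the values $\lambda_T(x_0)$ span $\alt^k$, via a limiting argument near a vertex. The paper shortcuts this by observing that constant $k$-forms already lie in the linear span of the Whitney forms as polynomial forms (that is, $\alt^k\subset\poly^-_1\alt^k=\WF^k$); so from $\lambda_T\wedge Q=0$ for all $T$ one immediately gets $\eta\wedge Q=0$ for every constant $k$-form $\eta$, and hence $Q=0$ by non-degeneracy of the wedge pairing. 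Your limiting argument is valid, but this observation turns the step you flag as ``delicate'' into a one-liner.
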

\begin{proof}
(i) The first condition implies the second.

(ii) Suppose the second condition holds. Since the matrix $D(x)$ is semi positive at each $x$, we get that the integrand is $0$ point-wise. Therefore for each $T$ we have the point-wise equality:
\begin{align}
\mu_{T} \lambda_T \wedge (\sum_S s(S) u_S \rmd \lambda_{\widehat S} ) = 0,
 \end{align}
so that :
 \begin{align}
 \lambda_T \wedge (\sum_S s(S) u_S \rmd \lambda_{\widehat S} ) = 0,
 \end{align}
Since constant differential forms are in the span of the Whitney forms $\lambda_T$, the third condition holds. 
 
 (iii) The reverse implications follow from similar arguments.
\end{proof}


\paragraph{Compatibility and unisolvence.}

The following result is Theorem 4.13 in \cite{ArnFalWin06}, and we follow the proof of Theorem 4.9 of that paper. 
\begin{lemma} \label{lem:dimc} Let $0 \le k \le n$ and $r \ge 1$. Then,
\begin{equation}
\dim \poly^-_{r}\alt^k(U) = \sum_{V \subcell U} \dim \poly_{r - \dim V +k -1}\alt^{\dim V - k}(V) .
\end{equation}
\end{lemma}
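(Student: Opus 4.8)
The plan is to verify the claimed dimension identity by a direct computation, comparing the explicit formula for $\dim \poly^-_r\alt^k(U)$ given in (\ref{eq:dimprm}) against the sum over subcells on the right-hand side. Since $U$ is an $n$-simplex, its subcells are indexed by dimension: a simplex $V \subcell U$ with $\dim V = m$ corresponds to a choice of $m+1$ vertices among the $n+1$ vertices of $U$, so there are $\binom{n+1}{m+1}$ subsimplices of dimension $m$. Thus I would first rewrite the right-hand side as a single sum over $m$:
\[
\sum_{V \subcell U} \dim \poly_{r - \dim V +k -1}\alt^{\dim V - k}(V) = \sum_{m=k}^{n} \binom{n+1}{m+1} \dim \poly_{r-m+k-1}\alt^{m-k}(V_m),
\]
where $V_m$ is any $m$-simplex (the summand is nonzero only when the form-degree $m-k \geq 0$ and the polynomial degree $r-m+k-1 \geq 0$, which holds here since $r \geq 1$ and $m \geq k$). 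Applying (\ref{eq:dimpr}) with ambient dimension $m$, form-degree $m-k$, and polynomial degree $r-m+k-1$ gives
\[
\dim \poly_{r-m+k-1}\alt^{m-k}(V_m) = \binom{r-1+k}{r-m+k-1}\binom{m}{m-k}= \binom{r+k-1}{m}\binom{m}{k}.
\]

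The heart of the proof then becomes a binomial identity: I must show
\[
\binom{r+k-1}{k}\binom{n+r}{n-k} = \sum_{m=k}^{n} \binom{n+1}{m+1}\binom{r+k-1}{m}\binom{m}{k}.
\]
The standard approach is to peel off the factor $\binom{r+k-1}{m}\binom{m}{k} = \binom{r+k-1}{k}\binom{r-1}{m-k}$ (the subset-of-a-subset identity), pull the $m$-independent factor $\binom{r+k-1}{k}$ out of the sum on both sides, and reduce to proving
\[
\binom{n+r}{n-k} = \sum_{m=k}^{n} \binom{n+1}{m+1}\binom{r-1}{m-k}.
\]
After the substitution $j = m-k$, this is a Vandermonde-type convolution: $\sum_{j\geq 0}\binom{n+1}{k+1+j}\binom{r-1}{j}$, which by the Chu–Vandermonde identity (using $\binom{n+1}{k+1+j}=\binom{n+1}{n-k-j}$) collapses to $\binom{n+r}{n-k}$. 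This is the step I expect to be the main obstacle: not because it is deep, but because keeping the index ranges, the edge cases ($m=k$, $m=n$), and the vanishing of out-of-range binomial coefficients straight requires care, and one must confirm the convolution is being applied with the correct upper/lower parameters.

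An alternative, arguably cleaner, route would be to avoid raw binomial manipulation and instead invoke the structural result already available in the paper. Lemma \ref{lem:dimc} is precisely the statement that the right-hand side of (\ref{eq:bounddim}) equals $\dim \poly^-_r\alt^k(U)$; since the trimmed forms are asserted to be a finite element system admitting extensions (the content we are building toward), one could in principle read off the identity from the equality case of Proposition \ref{prop:extdim}, identifying $E^k_0(V)$ with $\poly_{r-\dim V+k-1}\alt^{\dim V-k}(V)$ via the degrees of freedom. However, since the extension property for these spaces is exactly what the surrounding section is trying to \emph{establish}, using that machinery here would be circular; the honest proof at this stage must be the self-contained combinatorial computation above, following the Koszul-complex dimension count of \cite{ArnFalWin06} as the statement advertises. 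I would therefore present the binomial identity proof, flagging the Chu–Vandermonde step as the key lemma.
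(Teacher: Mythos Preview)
Your proof is correct and follows essentially the same approach as the paper: both rewrite the right-hand side as $\sum_{l}\binom{n+1}{l+1}\binom{r+k-1}{l}\binom{l}{k}$ and reduce to a Vandermonde-type convolution. The only cosmetic difference is the intermediate factorization---you use the subset-of-a-subset identity to pull out $\binom{r+k-1}{k}$ directly, whereas the paper extracts $\frac{n+1}{r+k}\binom{n}{k}$ before summing; your route is arguably a little cleaner since the factor you isolate appears verbatim in the target.
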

\begin{proof} Let $a$ be the left hand side and $b$ the right hand side of the claimed equality.  We write:
\begin{align}
b & = \sum_{l= 0}^n {n+1 \choose l+1} {r+k-1 \choose l} {l \choose k} = \frac{ n+1}{r+k}  {n \choose k } \sum_{l= 0}^n {r+k \choose l+1}{n-k \choose n-l},\\
& =  \frac{ n+1}{r+k}  {n \choose k } {n + r \choose 1+n} = { r + k -1 \choose k}{ n+ r \choose n - k} = a.
\end{align}
The first equality is (\ref{eq:dimpr}) whereas the last is (\ref{eq:dimprm}). From the first to the second line we used the  binomial identity:
\[ 
\sum_{j} {m \choose p-j} {n \choose q+ j} = {m + n \choose p+q}.
\]
The other identities come from elementary manipulations with factorials.
\end{proof}

We adopt the notation:
\[ 
\poly^-_{r}\alt^k_0(U) = \{ u \in  \poly^-_{r}\alt^k(U) \ : \ u|_{\partial U} = 0  \}.
\] 

\begin{theorem}\label{theo:charprl} Let $r\geq 1$. Then, spaces  $\poly^-_{r}\alt^k(U)$ define a compatible finite element system on $\calT$. One has:
\begin{align}\label{eq:highwhitney}
\poly^-_{r}\alt^k(U) = \poly_{r-1}(U) \WF^k(U), 
\end{align}
and, for $n = \dim U$:
\beq \label{eq:zerobc}
\poly^-_{r}\alt^k_0(U)=   \lspan \{u \mu_T \lambda_T\ : \ u \in \poly_{r - n + k -1}(U), T \in \subcell(U)^k \}.
\eeq
Here $\subcell(U)^k$ denotes the set of $k$-dimensional subcells of $U$ and the dependence of $\mu_T$ on $U$ is implicit.
\end{theorem}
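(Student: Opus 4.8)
The plan is to run an induction on $n=\dim U$, proving \emph{simultaneously} that $\poly^-_r\alt^k$ admits extensions on $U$ and that the characterization (\ref{eq:zerobc}) holds. Local exactness of (\ref{eq:coh}) is already guaranteed by the Koszul operator, so compatibility will follow from the extension property alone, and (\ref{eq:highwhitney}) will drop out at the end. Throughout, write $Z(U)$ for the right-hand side of (\ref{eq:zerobc}). I note that the summand of Lemma \ref{lem:dimc} attached to a cell $V$ equals $\dim\calF^k(V)$, so that lemma reads $\dim\poly^-_r\alt^k(U)=\sum_{V\subcell U}\dim\calF^k(V)$.

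First I record two facts valid for every $U$, with no induction. The inclusion $\poly_{r-1}(U)\WF^k(U)\subseteq\poly^-_r\alt^k(U)$ holds because $\kappa(f\omega)=f\kappa\omega$ for a scalar $f$ and $\WF^k=\poly^-_1\alt^k$: for $f\in\poly_{r-1}$ one has $f\lambda_T\in\poly_r\alt^k$ and $\kappa(f\lambda_T)=f\kappa\lambda_T\in\poly_r\alt^{k-1}$. Each generator $u\mu_T\lambda_T$ of $Z(U)$ has this form (the degrees sum to $r$) and vanishes on every facet $F=U\setminus\{j\}$, since either $\lambda_j$ divides $\mu_T$ when $j\notin T$, or $T\not\subseteq F$ and $\lambda_T|_F=0$ when $j\in T$; hence $Z(U)\subseteq\poly^-_r\alt^k_0(U)$. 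The crucial point is then the dimension identity $\dim Z(U)=\dim\calF^k(U)$: the surjection $\Psi:(u_T)\mapsto\sum_T u_T\mu_T\lambda_T$ onto $Z(U)$ and the map $\Theta:(u_S)\mapsto\sum_S s(S)u_S\rmd\lambda_{\widehat S}$ into $\poly_{r-n+k-1}\alt^{n-k}(U)$, both defined on $\bigoplus_T\poly_{r-n+k-1}(U)$, share the same kernel by the equivalence of the first and third conditions of Proposition \ref{prop:duality} (this is exactly where the positivity Proposition \ref{prop:diagdom} enters). Since the constant forms $\rmd\lambda_{\widehat S}$ span all of $\alt^{n-k}$, the map $\Theta$ is onto, whence $\dim Z(U)=\dim\operatorname{im}\Psi=\dim\operatorname{im}\Theta=\dim\poly_{r-n+k-1}\alt^{n-k}(U)=\dim\calF^k(U)$.

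For the inductive step I assume extensions and (\ref{eq:zerobc}) on every proper face. The extension property on $U$ I obtain from Proposition \ref{prop:extrec}: given $V\in\partial U$ and $v\in\poly^-_r\alt^k_0(V)$, the hypothesis (\ref{eq:zerobc}) on $V$ lets me write $v=\sum_T g_T\,\mu^V_T\lambda^V_T$ with $\mu^V_T=\prod_{i\in V\setminus T}\lambda^V_i$, and I extend it by reinterpreting each $V$-barycentric coordinate as the corresponding $U$-barycentric coordinate, setting $\tilde v=\sum_T \tilde g_T\big(\prod_{i\in V\setminus T}\lambda^U_i\big)\lambda^U_T$. Because $\lambda^U_i|_V=\lambda^V_i$ for $i\in V$ one checks $\tilde v|_V=v$, and the degrees again sum to $r-1$ before the final factor $\lambda^U_T$, so $\tilde v\in\poly_{r-1}\WF^k\subseteq\poly^-_r\alt^k(U)$. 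For a face $V'\neq V$ of the same dimension, choosing a vertex $i^*\in V\setminus V'$ kills every term on $V'$ (either $\lambda^U_{i^*}$ is a factor of the product, or $T\not\subseteq V'$ so $\lambda^U_T|_{V'}=0$). Thus Proposition \ref{prop:extrec} gives the extension property on $U$, and with the given local exactness the element system is compatible.

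Finally I close the induction by counting dimensions. With extensions available on $U$ and on $\partial U$, ontoness of restriction gives $\dim\poly^-_r\alt^k_0(U)=\dim\poly^-_r\alt^k(U)-\dim\poly^-_r\alt^k(\partial U)$, where Proposition \ref{prop:extdim} on the subcomplex $\partial U$, together with the inductive hypothesis and $\dim\poly^-_r\alt^k_0(V)=\dim Z(V)=\dim\calF^k(V)$, evaluates $\dim\poly^-_r\alt^k(\partial U)=\sum_{V\subcell U,\,V\neq U}\dim\calF^k(V)$. Lemma \ref{lem:dimc} then yields $\dim\poly^-_r\alt^k_0(U)=\dim\calF^k(U)$; comparing with the unconditional $\dim Z(U)=\dim\calF^k(U)$ and the inclusion $Z(U)\subseteq\poly^-_r\alt^k_0(U)$ forces $Z(U)=\poly^-_r\alt^k_0(U)$, which is (\ref{eq:zerobc}) on $U$. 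For (\ref{eq:highwhitney}), any $w\in\poly^-_r\alt^k(U)$ is the sum of the explicit extension of its boundary trace, built from the $\tilde{(\cdot)}\in\poly_{r-1}\WF^k(U)$ above, and a remainder in $\poly^-_r\alt^k_0(U)=Z(U)\subseteq\poly_{r-1}\WF^k(U)$, giving the reverse of the inclusion recorded earlier. The main obstacle is arranging all this without circularity: the extension construction needs (\ref{eq:zerobc}) on the faces, while (\ref{eq:zerobc}) on $U$ needs the extension property on $U$; the escape is precisely the unconditional identity $\dim Z(U)=\dim\calF^k(U)$ furnished by Proposition \ref{prop:duality}, which pins down the dimension of the model space $Z(U)$ independently of the induction.
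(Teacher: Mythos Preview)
Your proof is correct and follows essentially the same inductive strategy as the paper: use Proposition~\ref{prop:extrec} with the explicit barycentric extensions furnished by (\ref{eq:zerobc}) on lower-dimensional faces, invoke Proposition~\ref{prop:duality} to identify $\dim Z(V)$ with $\dim\calF^k(V)$, and close with the dimension count of Lemma~\ref{lem:dimc}. The only cosmetic difference is that the paper packages the final step as a single direct sum $W=K\oplus\bigoplus_{V\in\partial U}e_V\poly^-_r\alt^k_0(V)$ squeezed between $\poly_{r-1}\WF^k(U)$ and $\poly^-_r\alt^k(U)$, whereas you obtain $\dim\poly^-_r\alt^k_0(U)$ by subtracting $\dim\poly^-_r\alt^k(\partial U)$ (computed via Proposition~\ref{prop:extdim}) from $\dim\poly^-_r\alt^k(U)$; both routes use the same ingredients and yield the same conclusions.
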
 

\begin{proof}
By induction on the dimension of maximal simplices of $\calT$.

Suppose that the proposition has been proved for simplicial complexes of dimension $(n-1)$. Let $U$ be a simplex of dimension $n$. Due to the characterization (\ref{eq:zerobc}) applied to each $V \in \partial U$ we may define linear  extension operators:
\[ 
e_V : \poly^-_{r}\alt^k_0(V) \to \poly_{r-1}(U) \WF^k(U),
\] 
such that:
\[ 
(e_V v) |_{V'} = 0  \myfor V' \neq V,\ \dim V' = \dim V.
\] 
The proof of Proposition \ref{prop:extrec} then shows that the restriction operator
\beq
\label{eq:rho}
\rho: \sum_{V\in \partial U}  e_V  \poly^-_{r}\alt^k_0(V)   \to \poly^-_{r}\alt^k(\partial U) ,
\eeq
is onto. In particular the restriction $\poly^-_{r}\alt^k(U) \to \poly^-_{r}\alt^k(\partial U)$ is onto, so that the extension property holds for $U$.

Moreover the sum in (\ref{eq:rho}) is direct and the operator $\rho$ is injective: If $u$ is written
\[ 
u = \sum_{V\in \partial U}  e_V v_V,
\] 
and is mapped to $0$ by $\rho$, then $v_V= 0$ on $k$-dimensional cells $V\in \partial U$, therefore also on $(k+1)$-dimensional ones, etc. 

Let $K$ denote the right hand side in (\ref{eq:zerobc}) and consider the direct sum:
 \[ 
W = K \oplus \bigoplus_{V\in \partial U}  e_V  \poly^-_{r}\alt^k_0(V) ,
\] 
 $W$ is a subspace of  $\poly_{r-1}(U) \WF^k(U)$, which in turn is a subspace of $\poly^-_{r}\alt^k(U)$. By Proposition \ref{prop:duality} applied to each subsimplex of $U$ and Lemma \ref{lem:dimc} we have:
 \begin{align}
\dim W &= \dim K + \sum_{V\in \partial U} \dim \poly^-_{r}\alt^k_0(V), \nonumber\\
  & = \sum_{V\subcell U} \dim \poly_{r - \dim V +k -1}\alt^{\dim V - k}(V), \nonumber\\
  & = \dim \poly^-_{r}\alt^k(U).  \nonumber
\end{align}
Therefore:
\[ 
W = \poly_{r-1}(U) \WF^k(U) = \poly^-_{r}\alt^k(U).
\] 
So (\ref{eq:highwhitney}) holds and (\ref{eq:zerobc}) follows, using the injectivity of $\rho$.
\end{proof}

\begin{proposition}\label{prop:unisprl} The degrees of freedom (\ref{eq:candof}) are unisolvent and define a commuting interpolator.
\end{proposition}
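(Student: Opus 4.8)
The plan is to establish unisolvence via the third characterization in Proposition \ref{prop:unisolve}, and then to obtain the commuting interpolator from Proposition \ref{prop:com}. Throughout I write $U$ for the cell in question, with $n = \dim U$. First I would dispose of the dimension identity. By the definition (\ref{eq:candof}), for each subcell $V \subcell U$ one has $\dim \calF^k(V) = \dim \poly_{r - \dim V + k - 1}\alt^{\dim V - k}(V)$, so Lemma \ref{lem:dimc} reads exactly as
\begin{equation}
\dim \poly^-_r\alt^k(U) = \sum_{V \subcell U} \dim \calF^k(V),
\end{equation}
which is condition (\ref{eq:dimf}). It then remains to prove that the map $\poly^-_r\alt^k_0(U) \to \calF^k(U)^\star$ is injective; combined with the dimension count this yields unisolvence.

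For the injectivity I would use the characterization (\ref{eq:zerobc}) of $\poly^-_r\alt^k_0(U)$ furnished by Theorem \ref{theo:charprl}. An element of the kernel is some $w = \sum_T u_T \mu_T \lambda_T$, with scalar polynomials $u_T \in \poly_{r-n+k-1}(U)$ and $T$ ranging over the $k$-dimensional subsimplices, satisfying $\int_U v \wedge w = 0$ for every $v \in \poly_{r-n+k-1}\alt^{n-k}(U)$. The decisive choice is to test against $v = \sum_S s(S) u_S \rmd \lambda_{\widehat S}$, assembled from the same coefficient polynomials; since each $\rmd \lambda_{\widehat S}$ is a constant $(n-k)$-form, this $v$ does lie in the admissible space. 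Up to the sign $(-1)^{k(n-k)}$ from reordering the wedge, the vanishing of $\int_U v \wedge w$ is precisely the middle condition of Proposition \ref{prop:duality}, which forces $w = \sum_T u_T \mu_T \lambda_T = 0$. This is where the whole argument rests: injectivity is not elementary, and it is exactly what the semidefiniteness of $D(x)$ (Proposition \ref{prop:diagdom}) and its consequence Proposition \ref{prop:duality} are designed to deliver. I regard this as the main obstacle, already absorbed into those results.

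Finally, for the commuting property I would invoke Proposition \ref{prop:com}: since $E$ is compatible by Theorem \ref{theo:charprl} and $\calF$ is now unisolvent, it suffices to verify (\ref{eq:dadjoint}). Given $l \in \calF^k(T)$, say $l(u) = \int_T v \wedge u$ with $v \in \poly_{r - \dim T + k - 1}\alt^{\dim T - k}(T)$, I integrate by parts via the Leibniz rule and Stokes' theorem to get
\begin{equation}
\int_T v \wedge \rmd u = (-1)^{\dim T - k}\Big( \int_{\partial T} v \wedge u - \int_T \rmd v \wedge u \Big).
\end{equation}
The interior term $u \mapsto \int_T \rmd v \wedge u$ belongs to $\calF^{k-1}(T)$, because $\rmd v$ has precisely the polynomial degree and form degree demanded by (\ref{eq:candof}) for $(k-1)$-forms on $T$; and on each codimension-one face $T' \subcell T$, Stokes contributes $\orient(T,T') \int_{T'} v|_{T'} \wedge u|_{T'}$, where $v|_{T'}$ has exactly the degree making $u \mapsto \int_{T'} v|_{T'} \wedge u|_{T'}$ an element of $\calF^{k-1}(T')$, viewed in $\frE^{k-1}(T)^\star$ by pullback. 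The $\pm 1$ factors are irrelevant since the $\calF$-spaces are linear. Hence $l \circ \rmd \in \bigoplus_{T' \subcell T} \calF^{k-1}(T')$, which is (\ref{eq:dadjoint}); the only care required is the bookkeeping of polynomial degrees under $\rmd$ and under restriction to faces.
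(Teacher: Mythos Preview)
Your proposal is correct and follows essentially the same route as the paper: both use the characterization (\ref{eq:zerobc}) together with Proposition \ref{prop:duality} to obtain injectivity of $\poly^-_r\alt^k_0(U) \to \calF^k(U)^\star$, combine this with Lemma \ref{lem:dimc} via Proposition \ref{prop:unisolve}, and then appeal to Proposition \ref{prop:com} for commutation. The paper is terser, simply asserting that commutation follows from Proposition \ref{prop:com}; your explicit verification of (\ref{eq:dadjoint}) by the Leibniz--Stokes computation is a welcome elaboration of a step the paper leaves to the reader.
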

\begin{proof}
The characterization (\ref{eq:zerobc}) and Proposition \ref{prop:duality} show that the integrated wedge product defines an invertible bilinear form on:
\begin{equation}
\poly^-_{r}\alt^k_0(U) \times \poly_{r - n + k -1}\alt^{n -k}(U).
\end{equation}
Proposition \ref{prop:unisolve} and Lemma \ref{lem:dimc} then gives unisolvence.

Commutation follows from Proposition \ref{prop:com}.
\end{proof}

Now we can interpret Proposition \ref{prop:duality} as saying that linear relations in natural spanning families for high order Whitney forms and for their degrees of freedom, correspond to one another.

\subsection{Low order degrees of freedom.}

We have already defined two choices of systems of degrees of freedom for trimmed polynomial differential forms. The canonical one, defined in (\ref{eq:candof}), and the harmonic one, defined in (\ref{eq:harmdofk}, \ref{eq:harmdof}). We now provide a third possibility. These are overdetermining, and we first show how one can deduce a unisolvent sysdof from an overdetermining one.

\paragraph{General construction.}
Suppose $E$ is a compatible finite element system and that $\calF$ is a system of degrees of freedom such that $E_0^k(T) \to \calF^k(T)^\star$ is injective for each $T$ and $k$. This means that an element of $E^k(T)$ is over-determined by the degrees of freedom.

We suppose that $\calF^{\dim T}(T)$ contains the integral. We also suppose that $\rmd^\star: l \mapsto l \circ \rmd$ makes the following sequences exact:
\begin{equation}
\xymatrix{
0 \ar[r] & \bbR \ar[r]&  \calF^{\dim T} (\subcell( T) ) \ar[r]& \ldots \ar[r] & \calF^0(\subcell(T)) \ar[r] & 0.
}
\end{equation}
The first non trivial arrow is inclusion of the integral.

Define spaces $\calF_0^k(T)$ to consist of the forms in $\calF^k(\subcell(T))$ whose restriction to $E_0^k(T)$ are zero. We get exact sequences:
\begin{equation}
\xymatrix{
0 \ar[r] & \calF_0^k(T) \ar[r] &  \calF^k(\subcell(T)) \ar[r] & E_0^k(T)^\star \ar[r] & 0.
}
\end{equation}
The second arrow is inclusion and the third arrow is restriction. We can organize everything in a diagram, with exact rows and columns:
\begin{equation}
\xymatrix{
         &                               & 0 \ar[d]                          & 0 \ar[d]                & 0 \ar[d]          & \\
 & 0   \ar[r] \ar[d]                    & \calF_0^{\dim T} (T) \ar[r] \ar[d]      & \ldots \ar[r] \ar[d] & \calF_0^0(T) \ar[r] \ar[d]       & 0\\
0 \ar[r] & \bbR \ar[r] \ar[d]                 & \calF^{\dim T} (\subcell(T)) \ar[r]  \ar[d]& \ldots \ar[r] \ar[d]  & \calF^0(\subcell(T)) \ar[r] \ar[d] & 0\\
0 \ar[r] & \bbR \ar[r] \ar[d]                  & E_0^{\dim T} (T)^\star \ar[r]   \ar[d]  & \ldots \ar[r]  \ar[d] & E_0^0( T)^\star \ar[r]  \ar[d]    & 0\\
         & 0                                      & 0                                   & 0                    & 0          
}
\end{equation}
The horizontal arrows are almost all $\rmd^\star$.

According to Proposition \ref{prop:subcompl} below it is possible to choose a supplementary $\calG^k(T)$ of $\calF_0^k(T)$ in $\calF^k(\subcell(T))$ in such a way that $\rmd^\star$ maps $\calG^k(T)$ to $\calG^{k-1}(T)$.  It provides a unisolvent system of degrees of freedom with commuting interpolator.

\begin{proposition}\label{prop:subcompl}
Suppose we have a commuting diagram with exact rows and columns:
\begin{equation}
\xymatrix{
         & 0 \ar[d]                               & 0 \ar[d]                      & 0 \ar[d]          & \\
0 \ar[r] & A^0 \ar[r] \ar[d]^{f^0}      & \ldots \ar[r] \ar[d] & A^n \ar[r] \ar[d]^{f^n}        & 0\\
0 \ar[r] & B^0 \ar[r]  \ar[d]^{g^0}     & \ldots \ar[r] \ar[d] & B^n \ar[r] \ar[d]^{g^n}  & 0\\
0 \ar[r] & C^0 \ar[r]   \ar[d]         & \ldots \ar[r]  \ar[d] & C^n \ar[r]  \ar[d]       & 0\\
         & 0                                      & 0                                            & 0          
}
\end{equation}
Then one can choose a subspace $D^k$ of $B^k$ for each $k$, such that:
\begin{equation}
B^k  = f^kA^k \oplus D^k,
\end{equation}
and the differential of $B^\bs$ maps $D^k$ to $D^{k+1}$. Then $g^\bs$ induces an isomorphism of complexes $D^\bs \to C^\bs$.
\end{proposition}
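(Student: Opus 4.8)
The plan is to reduce everything to the construction of a \emph{chain-map section} of $g^\bs$, that is, a family of linear maps $s^k : C^k \to B^k$ with $g^k s^k = \id_{C^k}$ and $\rmd\, s^k = s^{k+1}\rmd$ for all $k$; one then simply sets $D^k = s^k(C^k)$. Granting such an $s$, all three assertions follow at once. Since $g^k s^k = \id$, the map $s^k$ is injective and $g^k$ restricts to an isomorphism $D^k \to C^k$, which is a chain map, so $g^\bs$ induces an isomorphism of complexes $D^\bs \to C^\bs$. The decomposition $B^k = f^k A^k \oplus D^k$ comes from writing $b = s^k g^k b + (b - s^k g^k b)$: the first summand lies in $D^k$, the second lies in $\ker g^k = f^k A^k$ by exactness of the columns, and the sum is direct because $g^k$ is injective on $D^k$ while annihilating $f^k A^k$. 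Finally $\rmd D^k = \rmd\, s^k(C^k) = s^{k+1}\rmd(C^k) \subseteq D^{k+1}$, so $D^\bs$ is indeed a subcomplex.

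I would build $s$ by descending induction on $k$, from $k = n$ down to $k = 0$. At the top degree $B^{n+1} = C^{n+1} = 0$, so the chain relation is vacuous and any linear section $s^n$ of the surjection $g^n$ will do (sections exist, as we work over a field). For the inductive step, assume $s^{k+1}, \dots, s^n$ have been produced with $\rmd\, s^{j} = s^{j+1}\rmd$ for $j \geq k+1$. Pick any linear section $\sigma^k$ of $g^k$ and record its defect $\phi = s^{k+1}\rmd - \rmd\,\sigma^k : C^k \to B^{k+1}$. Using that $g$ is a chain map together with $g^{k+1}s^{k+1} = \id$ and $g^k\sigma^k = \id$, a short computation gives $g^{k+1}\phi = 0$, so $\phi$ takes values in $\ker g^{k+1} = f^{k+1}A^{k+1}$; as $f^{k+1}$ is injective we may write $\phi = f^{k+1}\alpha$ for a unique linear $\alpha : C^k \to A^{k+1}$. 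The correction I seek is $s^k = \sigma^k + f^k\psi$ with $\psi : C^k \to A^k$ linear and $\rmd\,\psi = \alpha$; for such a $\psi$ one has $g^k s^k = \id$ (since $g^k f^k = 0$) and $\rmd\, s^k = \rmd\,\sigma^k + f^{k+1}\rmd\,\psi = \rmd\,\sigma^k + f^{k+1}\alpha = \rmd\,\sigma^k + \phi = s^{k+1}\rmd$, which is exactly the chain relation in degree $k$.

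The crux, and the only place where exactness of a row enters, is the existence of $\psi$, i.e. showing that $\alpha$ lies in $\rmd A^k$. By exactness of the $A$-row at degree $k+1$ this is equivalent to $\rmd\,\alpha = 0$, and since $f^{k+2}$ is injective it suffices to verify $f^{k+2}\rmd\,\alpha = 0$. Here $f^{k+2}\rmd\,\alpha = \rmd f^{k+1}\alpha = \rmd\,\phi = \rmd\, s^{k+1}\rmd - \rmd^2\sigma^k = \rmd\, s^{k+1}\rmd$, and the inductive relation $\rmd\, s^{k+1} = s^{k+2}\rmd$ combined with $\rmd^2 = 0$ on $C^\bs$ gives $\rmd\, s^{k+1}\rmd = s^{k+2}\rmd^2 = 0$. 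Thus $\alpha$ is a cocycle, hence a coboundary, and a linear $\psi$ with $\rmd\,\psi = \alpha$ is obtained by composing $\alpha$ with a linear right inverse of the surjection $\rmd : A^k \to \rmd A^k$. This closes the induction, produces $s$, and with it the desired complement $D^\bs$. I expect this obstruction-vanishing computation to be the only real point; everything else is bookkeeping that I would carry out mechanically.
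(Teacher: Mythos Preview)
Your argument is correct. You construct a chain-map section $s^\bs$ of $g^\bs$ by descending induction, using that the obstruction $\alpha$ to extending an arbitrary linear section to a chain map lands in the cohomology of $A^\bs$, which vanishes by exactness of the $A$-row. Setting $D^k = s^k(C^k)$ then gives the desired splitting.

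The paper takes a different and shorter route: it fixes a scalar product $a^k$ on each $B^k$ and simply defines
\[
D^k = \{ u \in B^k \ : \ u \perp^k \rmd f^{k-1} A^{k-1} \text{ and } \rmd u \perp^{k+1} \rmd f^k A^k\}.
\]
One checks directly that $D^k \cap f^k A^k = 0$ (if $u$ lies in the intersection then $\rmd u = 0$, hence $u \in \rmd f^{k-1} A^{k-1}$ by exactness of the $A$-row, hence $u=0$), that $B^k = f^k A^k + D^k$ (first correct $\rmd b$ by projecting onto $\rmd f^k A^k$, then correct $b$ itself by an element of $\rmd f^{k-1}A^{k-1}$), and that $\rmd D^k \subseteq D^{k+1}$ is immediate from $\rmd^2 = 0$. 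Both proofs use only exactness of the $A$-row together with column exactness; the exactness of the $B$- and $C$-rows is not needed. Your approach is the standard homological one and works over any field without choosing inner products; the paper's formula is non-inductive and explicit once the scalar products are fixed, which is convenient in the finite element context where natural inner products ($\rmL^2$ or discrete) are at hand.
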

\begin{proof}
Choose a scalar product $a^k$ on $B^k$, denote orthogonality with respect to $a^k$ by $\perp^k$ and define $D^k$ by:
\begin{equation}
D^k = \{ u \in B^k \ : \ u \perp^k \rmd f^{k-1} A^{k-1} \rmins{and} \rmd u \perp^{k+1} \rmd f^k A^k\}.
\end{equation}
This provides one possible choice for $D^k$.
\end{proof}

\paragraph{Small degrees of freedom.} Fix $r\geq 1$. For $E$, take trimmed polynomials of order $r$, that is $E^k(T)= \poly^-_{r}\alt^k(T)$. For $\calF$ take degrees of freedom deduced from its principal lattice of order $r$, $\Sigma_r(T)$ as follows. Let $\Sigma^k_r(T)$ be the so-called small $k$-simplexes in $T$, whose vertices are points in $\Sigma_r(T)$, and which are $1/r$-homothetic to a $k$-face of $T$. Following \cite{RapBos09} we consider integration on elements of $\Sigma^k_r(T)$ as degrees of freedom for $k$-forms, and let $\calF^k(T)$ be the space they span.  We want to show that these degrees of freedom are \emph{over-determining} on $E$.

The proof is based on two lemmas. In the following, $\Pi^k_T$ denotes the interpolation operator onto Whitney $k$-forms, of lowest order, determined by the simplex $T$. Moreover $\tau_\xi$ denotes translation by a factor $\xi$:
\begin{equation}
(\tau_\xi u) (x) = u(x - \xi).
\end{equation}

\begin{lemma}\label{lem:intzero}
Let $V$ be an oriented $n$-dimensional vector space. Let $U$ be an $n$-simplex in $V$. Let $u$ be a polynomial $n$-form on $V$ such that:
\begin{equation}
\forall \xi \in V \quad \int_U \tau_\xi u = 0.
\end{equation}
Then $u = 0$.
\end{lemma}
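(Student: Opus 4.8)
The plan is to show that the vanishing of all translated integrals forces the polynomial form $u$ to be zero, by exploiting the fact that integration over a fixed simplex against arbitrary translations encodes enough information to recover $u$. Writing $u = p \, \rmd x_1 \wedge \ldots \wedge \rmd x_n$ for a scalar polynomial $p$ on $V$, the condition becomes
\begin{equation}
\forall \xi \in V \quad \int_U p(x - \xi) \, \rmd x = 0.
\end{equation}
The key observation is that the map $\xi \mapsto \int_U p(x-\xi)\,\rmd x$ is itself a polynomial in $\xi$ (of the same degree as $p$), namely the convolution of $p$ with the indicator function of $U$. If this polynomial vanishes identically in $\xi$, I want to conclude $p \equiv 0$.

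First I would make the convolution structure explicit: set $P(\xi) = \int_U p(x-\xi)\,\rmd x = (p * \mathbf{1}_U)(\xi)$, up to a reflection of $U$. The hypothesis says $P \equiv 0$. The cleanest route is then to differentiate. Applying a constant-coefficient differential operator $D$ in the $\xi$ variables and using that $\partial_{\xi_j} p(x-\xi) = -\partial_{x_j} p(x-\xi)$, I can move derivatives onto $p$ under the integral sign; since $P \equiv 0$, all its derivatives vanish, so $\int_U (D p)(x-\xi)\,\rmd x = 0$ for every $D$ and every $\xi$. Evaluating at $\xi = 0$ gives $\int_U (Dp) = 0$ for all constant-coefficient $D$. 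By taking $D$ to range over all monomials $\partial^\alpha$ one obtains $\int_U \partial^\alpha p = 0$ for every multi-index $\alpha$; since $p$ is a polynomial, the top-order derivatives are constants, and a downward induction on degree forces every coefficient of $p$ to vanish.

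Alternatively, and perhaps more transparently, I would argue directly that a nonzero polynomial cannot have a convolution with $\mathbf{1}_U$ that vanishes identically. One way: the leading homogeneous part of the polynomial $\xi \mapsto \int_U p(x-\xi)\,\rmd x$ is $\vol(U) \cdot (\text{leading part of } p)(-\xi)$ plus lower order corrections, because as $|\xi| \to \infty$ the dominant contribution of $\int_U p(x-\xi)\,\rmd x$ comes from evaluating the top-degree part of $p$ at $-\xi$ and integrating the constant over $U$. Hence if $P \equiv 0$ then the leading part of $p$ vanishes, and by induction on $\deg p$ the whole polynomial vanishes.

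The main obstacle is making the leading-order asymptotic claim precise, or equivalently justifying the exchange of differentiation and integration and the downward induction in the first approach; both are routine but require care with the combinatorics of which derivatives survive. I expect the differentiation-under-the-integral argument to be the most robust, since it reduces everything to the elementary statement that a polynomial all of whose $\partial^\alpha$ have zero integral over a fixed full-dimensional simplex must be zero --- which follows because the integral pairing of $p$ against constants, together with the fact that $\partial^\alpha p$ of maximal order is a nonzero constant when $p \neq 0$, immediately yields a contradiction.
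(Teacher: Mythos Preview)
Your argument is correct. Both routes you sketch go through without difficulty: in the differentiation approach, $P\equiv 0$ gives $\int_U \partial^\alpha p = 0$ for every multi-index $\alpha$, and taking $|\alpha|=\deg p$ makes $\partial^\alpha p$ a constant whose integral vanishes, killing the top-degree coefficients; a downward induction on degree then finishes. The leading-term approach is equally clean, since the homogeneous degree-$d$ part of $\xi\mapsto\int_U p(x-\xi)\,\rmd x$ is exactly $\vol(U)\,p_d(-\xi)$, where $p_d$ is the top homogeneous component of $p$. There is no genuine obstacle in either version; differentiation under the integral is immediate here because $p$ is polynomial and $U$ is compact.

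As for comparison with the paper: the paper does not prove this lemma at all --- its proof reads simply ``Granted.'' So you have supplied an actual argument where the paper chose to take the statement as evident.
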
 
\begin{proof}
Granted.
\end{proof}

\begin{lemma}\label{lem:interzero}
Let $V$ be an oriented $n$-dimensional vector space. Let $U$ be a $n$-simplex in $V$. Let $u$ be a $k$-form on $V$ which is polynomial. If $\Pi^k_U \tau_\xi u = 0$ for all $\xi \in V$, then $u=0$.
\end{lemma}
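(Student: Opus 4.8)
Looking at Lemma \ref{lem:interzero}, I need to show that if a polynomial $k$-form $u$ on $V$ satisfies $\Pi^k_U \tau_\xi u = 0$ for all translations $\xi$, then $u = 0$. The key is to relate the Whitney interpolation operator $\Pi^k_U$ back to the scalar integration situation already settled in Lemma \ref{lem:intzero}.

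Here is my plan. The Whitney interpolator $\Pi^k_U$ onto lowest-order Whitney forms is determined by the degrees of freedom given by integration over the $k$-faces of $U$; that is, $\Pi^k_U u$ is the unique Whitney $k$-form whose integral over each $k$-face $F$ of $U$ equals $\int_F u$. Since the Whitney forms $\lambda_F$ form a basis dual to these integration functionals (by Proposition \ref{prop:wdof}, $\int_F \lambda_{F'} = \delta_{FF'}$), the hypothesis $\Pi^k_U \tau_\xi u = 0$ is equivalent to saying that for every $k$-face $F$ of $U$,
\begin{equation}
\int_F \tau_\xi u = 0 \qquad \text{for all } \xi \in V.
\end{equation}
So the task reduces to showing: if a polynomial $k$-form $u$ integrates to zero over every translate of every $k$-face of $U$, then $u = 0$.

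\textbf{Reduction to the top-dimensional case.} Fix a single $k$-face $F$ of $U$. The condition $\int_F \tau_\xi u = 0$ for all $\xi \in V$ is exactly the hypothesis of Lemma \ref{lem:intzero}, but relative to the $k$-dimensional affine subspace $W$ containing $F$, rather than all of $V$. The subtlety is that $u$ is a $k$-form on the $n$-dimensional space $V$, and only its pullback to the $k$-plane through $F$ enters the integral. My plan is to decompose the translation $\xi = \xi_{\parallel} + \xi_{\perp}$ into components parallel and transverse to $W$. For fixed transverse component $\xi_\perp$, varying $\xi_\parallel$ within $W$ and applying Lemma \ref{lem:intzero} on the $k$-dimensional space $W$ to the pullback $i_F^\star \tau_{\xi_\perp} u$ (a polynomial $k$-form on a $k$-dimensional space, hence a top-degree form) shows that this pullback vanishes identically. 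Letting $\xi_\perp$ range over $V$, I conclude that the pullback of every translate of $u$ to the $k$-plane directed by $F$ is zero. Equivalently, writing $u = \sum_{|\alpha|=k} u_\alpha(x)\, \rmd x_\alpha$ in coordinates adapted so that $W$ is a coordinate plane, the component $u_\alpha$ corresponding to the directions spanning $F$ vanishes as a polynomial on all of $V$.

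\textbf{Sweeping over all faces.} The final step is to let $F$ range over all $k$-faces of $U$. Because $U$ is a genuine $n$-simplex, the directions spanned by its various $k$-faces cover all $\binom{n}{k}$ coordinate $k$-planes in a suitable basis; more precisely, the exterior $k$-vectors associated to the $k$-faces through a common vertex form a basis of $\alt^k V$. Hence, as $F$ varies, the vanishing components $u_\alpha$ exhaust a spanning set of the target space $\alt^k V^\star$, forcing every coefficient of $u$ to vanish identically and thus $u = 0$.

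\textbf{Main obstacle.} I expect the delicate point to be the linear-algebra bookkeeping in the last step: verifying that the $k$-faces of a single simplex $U$ really do probe a full spanning set of $k$-covector directions, so that the component-wise vanishings combine to kill all of $u$ rather than only some components. This is where the nondegeneracy of $U$ as a simplex (its edge vectors form a basis of $V$) is essential, and it must be invoked carefully; the reduction via Lemma \ref{lem:intzero} on each $k$-plane is comparatively routine once the parallel/transverse splitting of $\xi$ is set up.
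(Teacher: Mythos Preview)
Your argument is correct. The first two steps --- unpacking $\Pi^k_U$ via the face integrals from Proposition \ref{prop:wdof}, and then splitting $\xi = \xi_\parallel + \xi_\perp$ so that Lemma \ref{lem:intzero} applies on the $k$-dimensional affine span of each face $F$ --- match the paper's proof essentially verbatim. The point you flag as the ``main obstacle'' is in fact routine: choosing a vertex $v_0$ of $U$, the edge vectors $e_i = v_i - v_0$ form a basis of $V$, and the $k$-faces through $v_0$ have tangent $k$-planes spanned exactly by the $\binom{n}{k}$ subsets $\{e_{i_1},\dots,e_{i_k}\}$, whose wedges form a basis of $\alt^k V$. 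So the componentwise vanishings you obtain do indeed force $u=0$.

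Where you diverge from the paper is in this final step. After establishing that the pullback of $u$ to every affine $k$-plane parallel to a $k$-face of $U$ vanishes, the paper does not invoke the linear algebra of $\alt^k V$. Instead it observes that for any simplex $T'$ obtained from $U$ by translation and scaling one still has $\Pi^k_{T'} u = 0$ (since every $k$-face of $T'$ lies in such a $k$-plane), and then lets $T'$ shrink to a point $x$ to deduce $u(x)=0$ from the convergence of the Whitney interpolant. Your route is more elementary --- it avoids appealing to any approximation property of Whitney forms --- while the paper's route keeps the argument phrased entirely in terms of the interpolation operator and requires no explicit basis bookkeeping. Both are short and valid; yours is arguably cleaner.
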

\begin{proof}

For each $k$-face $T$ of $U$ we have $\int_T \tau_\xi u = 0$ for all $\xi \in V$ tangent to $T$. By Lemma \ref{lem:intzero} we get that $u$ pulled back to the tangent space of $T$ is $0$. Since this holds for all translates of $T$, it follows that the pullback of $u$ to any affine $k$-dimensional subspace of $V$ which is parallel to a $k$-face of $U$, is $0$. It follows that for any simplex $T'$ obtained from $T$ by a translation and a scaling, we have $\Pi^k_{T'} u = 0$. At any point $x \in V$ we may choose $T'$ in a sequence shrinking to $x$ and deduce $u(x) = 0$. Therefore $u = 0$.
\end{proof}

\begin{proposition}\label{prop:overdet} Let $V$ be an oriented $n$-dimensional vector space. Let $U$ be a $n$-simplex in $V$. If $u \in \poly^-_{r}\alt^k(U)$ satisfies $\int_{T} u = 0 $ for all $T \in \Sigma^k_r(U)$ then $ u =0$. 
\end{proposition}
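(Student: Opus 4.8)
The plan is to regard $u$ as (the restriction of) a genuine polynomial $k$-form on all of $V$, which is legitimate since $\poly^-_{r}\alt^k(U)\subseteq\poly_{r}\alt^k(U)$, and then to reduce everything to Lemma \ref{lem:interzero}: it suffices to upgrade the discrete hypothesis $\int_T u=0$, $T\in\Sigma^k_r(U)$, to the continuous statement $\Pi^k_U\tau_\xi u=0$ for \emph{every} $\xi\in V$. The engine for this is the top-degree case $k=\dim U$. There $u=p\,\lambda_U$ with $p\in\poly_{r-1}(U)$, because $\poly^-_{r}\alt^{n}(U)=\poly_{r-1}\alt^{n}(U)$, and the small $n$-simplices homothetic to $U$ are the translates $\tfrac1r U+\xi$ whose admissible base points can be identified with the principal lattice of order $r-1$, a set of $\binom{n+r-1}{n}$ nodes that is unisolvent for $\poly_{r-1}$. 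Since $\xi\mapsto\int_{\frac1r U+\xi}u$ is a polynomial of degree at most $r-1$ in $\xi$, its vanishing on this node set forces it to vanish identically, and Lemma \ref{lem:intzero} then gives $p=0$.

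Next I would bootstrap to general $k$ by slicing along the directions of the $k$-faces, organised as an induction on $n=\dim U$. For a facet $U'$ of $U$ the small simplices of $\Sigma^k_r(U)$ lying in $U'$ are exactly the elements of $\Sigma^k_r(U')$, and $u|_{U'}\in\poly^-_{r}\alt^k(U')$, so the inductive hypothesis yields $u|_{U'}=0$; applied to every facet this gives $u|_{\partial U}=0$, i.e.\ $u\in\poly^-_{r}\alt^k_0(U)$. The quantitative fact that drives the slicing is a degree reduction: writing $u\in\poly_{r-1}\WF^k(U)$ as in Theorem \ref{theo:charprl}, the pullback of $u$ to any $k$-plane parallel to a $k$-face $F$ has in-plane polynomial degree at most $r-1$, because the barycentric coordinates transverse to $F$ are constant on such a plane and each contributing Whitney factor $\lambda_w$ ($w\notin F$) therefore carries no in-plane degree. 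On the full slice $F$ itself this matches the principal-lattice unisolvence exactly, which is why the boundary reduction is clean.

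Once $\Pi^k_U\tau_\xi u=0$ has been established for all $\xi$, Lemma \ref{lem:interzero} converts it into $u=0$, finishing the proof. The main obstacle is the residual interior case, i.e.\ showing that $u\in\poly^-_{r}\alt^k_0(U)$ together with the vanishing of the \emph{interior} small-simplex integrals forces $u=0$. Here a naive per-slice argument breaks down: every interior slice parallel to $F$ has size $m/r$ with $m\le r-n+k$, so it contains too few admissible small simplices (only enough for $\poly_{m-1}$) to pin down the restriction $u|_P$, whose in-plane degree is one too high. Thus the interior vanishing conditions cannot be used one slice at a time but must be combined across all parallel slices and all face directions simultaneously; this is precisely where the \emph{over-determining} character of the system — there being strictly more small simplices than $\dim\poly^-_{r}\alt^k(U)$ — is essential, and it is the step I expect to require the most careful bookkeeping.
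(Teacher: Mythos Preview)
Your treatment of the top-degree case $k=n$ is correct and matches the paper's. However, your inductive scheme has a genuine gap exactly where you flag it: after reducing to $u\in\poly^-_r\alt^k_0(U)$, you have no mechanism to handle the interior small simplices, and your own slice count shows why --- on each parallel $k$-plane the in-plane degree of $u$ exceeds by one what the available small simplices can control. This is not a bookkeeping issue that ``careful'' work will resolve; the slice-by-slice approach genuinely lacks enough equations, and invoking the global over-determination does not by itself produce a proof.

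The idea you are missing is a \emph{descending induction on $k$} combined with Stokes' theorem, which supplies exactly the missing global degree reduction. Given $u\in\poly^-_r\alt^k(U)$ with $\int_T u=0$ for all $T\in\Sigma^k_r(U)$, observe that for every $T\in\Sigma^{k+1}_r(U)$ the faces of $T$ lie in $\Sigma^k_r(U)$, so $\int_T\rmd u=0$ by Stokes. Since $\rmd u\in\poly^-_r\alt^{k+1}(U)$, the induction hypothesis (for $k+1$) gives $\rmd u=0$. By local exactness $u\in\rmd\poly^-_r\alt^{k-1}(U)\subseteq\poly_{r-1}\alt^k(U)$: the degree drops from $r$ to $r-1$ \emph{globally}, not just on slices. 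Now every small $n$-simplex $S\in\Sigma^n_r(U)$ has all its $k$-faces in $\Sigma^k_r(U)$, so $\Pi^k_S u=0$; writing the small $n$-simplices as translates $S_0+\xi$ with $\xi$ ranging over a principal lattice of order $r-1$, the map $\xi\mapsto\Pi^k_{S_0}\tau_\xi u$ is polynomial of degree $r-1$ and vanishes on that lattice, hence vanishes identically. Lemma~\ref{lem:interzero} then gives $u=0$. Your induction on $\dim U$ and the boundary reduction $u|_{\partial U}=0$ are simply not needed.
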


\begin{proof}
We may suppose $r >1$, since the case $r= 1$ is standard degrees of freedom for Whitney forms.

For $k = \dim V$ we do the following. Remark that $u$ is polynomial of degree $r-1$. Choose $T \in \Sigma^k_r(U)$ such that the other elements of $\Sigma^k_r(U)$ are obtained as translated by vectors $\xi \in \Sigma_{r-1}(W)$ of some $n$-simplex $W$. Since the map:
\begin{equation}
\xi \mapsto \int_{T} \tau_\xi u,
\end{equation}
is a polynomial of degree $r-1$, which is zero at the points in $\Sigma_{r-1}(W)$, it is zero everywhere. By Lemma \ref{lem:intzero} we have that $u = 0$.

Then we do a descending induction on $k$. Pick then $k < \dim V$ and $u \in \poly^-_{r}\alt^k(U)$ such that $\int_{T} u = 0 $ for all $T \in \Sigma^k_r(U)$. We suppose that the proposition has been proved for elements in $\poly^-_{r}\alt^{k+1}(U)$.

Remark that $ \rmd u \in \poly^-_r\alt^{k+1}(U)$ and that for any $T \in \Sigma^{k+1}_r(U)$, we have $\int_{T} \rmd u = 0$ by Stokes theorem applied on small simplices.  By the induction hypothesis we deduce $\rmd u = 0$. Therefore $u\in \rmd \poly^-_r\alt^{k-1}(U)$ is actually a polynomial of degree $r-1$. For each small $n$-dimensional simplex $S$ in $U$, (i.e. $S \in \Sigma^n_r(U)$) we have $\Pi^k_S u = 0$. Now we consider these $S$ to be translated of one of them, say $S_0$, by factors $\xi$. We have that $\Pi^k_{S_0} (\tau_\xi u)$ is a polynomial of degree $r-1$ in $\xi$, which is $0$ on a regular lattice of weight $r-1$ ($\Sigma_{r-1}$ of some $n$-dimensional simplex, see Figure \ref{fig:nest}), hence it is identically $0$. By Lemma \ref{lem:interzero}, $u = 0$. 
\end{proof}

\begin{figure}
\centering
\includegraphics[width = 6cm]{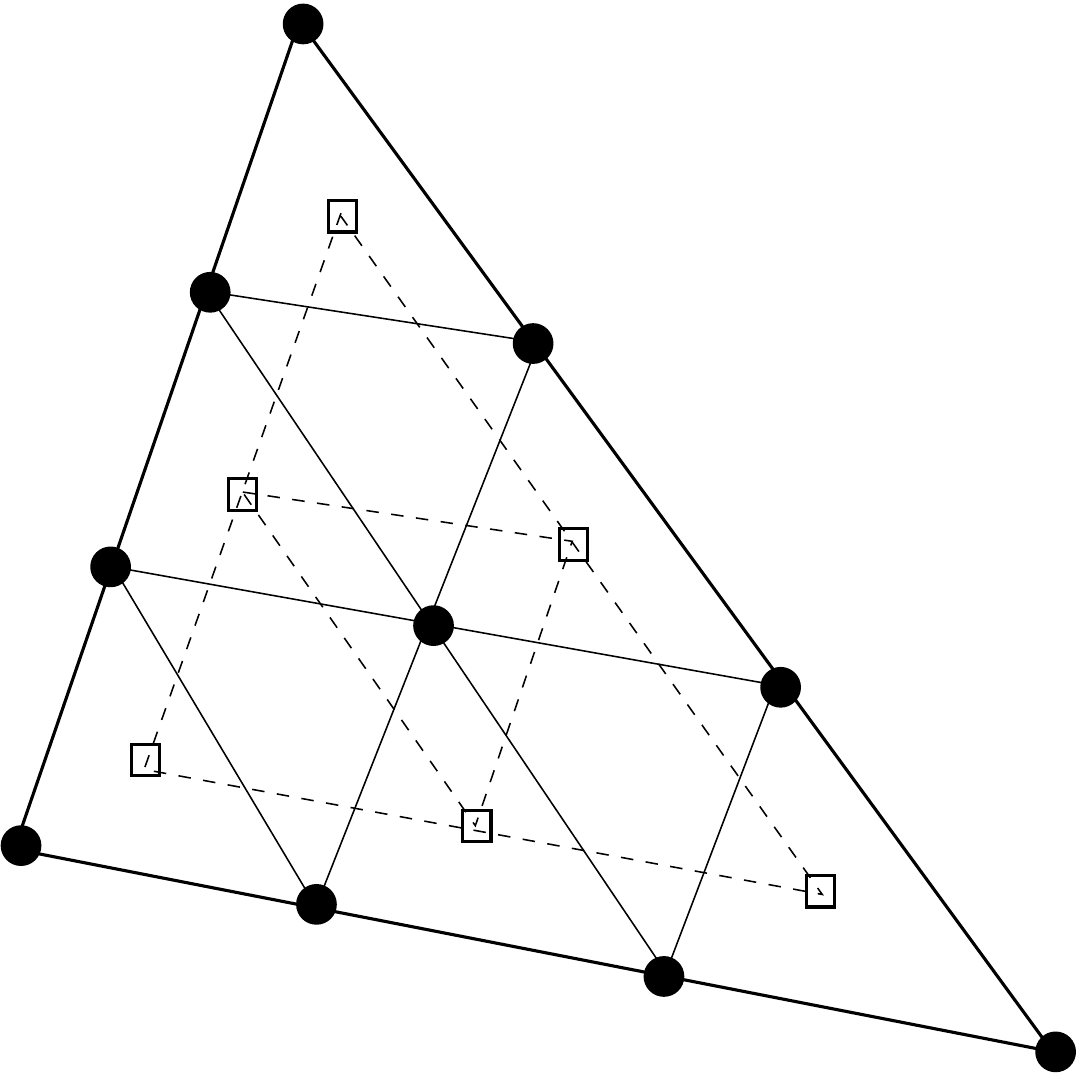}
\caption{A triangle, with the principal lattice of order $3$, and the lattice of order $2$ based on barycenters of small triangles. \label{fig:nest}}
\end{figure}

The construction given in the proof of Proposition \ref{prop:subcompl} depends on a choice of scalar product for each $\calF^k(T)$. The scalar product which makes the canonical basis of $\calF^k(T)$ orthonormal provides a possible choice.

\paragraph{Computation of small degrees of freedom.}
Let $U$ be an $n$-simplex, and $T$ an $k$-face of $U$. We suppose we have fixed an enumeration of $T$ of the form $x:[k] \to T$, which is compatible with the orientation of $T$. We provide a formula for the small degrees of a form $\lambda^\alpha \lambda_T$, with $\alpha \in \Sigma_{r-1}[k]$ and $\lambda^\alpha = \lambda_0^{\alpha_0} \ldots \lambda_k^{\alpha_k}$. Let $T'$ be an oriented $k$-simplex included in $U$. We compute:
\beq \label{eq:smalldof}
\int_{T'} \lambda^\alpha \lambda_T,
\eeq
extending  Proposition 2.3 of \cite{Rap05} to arbitrary dimension. 
Let $x':[k]\to T'$ be an enumeration of the vertices of $T'$ compatible with its orientation.

\begin{proposition}
The form $\lambda_T |_ {T'}$ is constant and:
\beq
\int_{T'} \lambda_T = \det [\lambda_{x(i)} (x'(j))],
\eeq
where the matrix on the right is indexed by $[k]\times [k]$.
\end{proposition}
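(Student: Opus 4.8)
The plan is to pull everything back to the standard $k$-simplex and reduce the statement to the normalization already established in Proposition \ref{prop:wdof}. Let $\phi:\Delta^k \to T'$ be the affine isomorphism sending the $j$-th vertex of the standard simplex $\Delta^k$ (with barycentric coordinates $t_0,\ldots,t_k$) to $x'(j)$; since $x'$ respects the orientation of $T'$, the map $\phi$ is orientation preserving and $\int_{T'}\lambda_T = \int_{\Delta^k}\phi^\star\lambda_T$. Because each $\lambda_{x(m)}$ is affine and $\sum_j t_j = 1$, I first record that $\phi^\star\lambda_{x(m)} = \sum_j M_{mj}\,t_j$ on $\Delta^k$, where $M_{mj} = \lambda_{x(m)}(x'(j))$ is exactly the matrix appearing in the statement.

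The heart of the argument is the identity $\phi^\star\lambda_T = \det(M)\,\lambda_{\Delta^k}$. To see it, recognize the Whitney form as a Koszul object: with $E = \sum_m y_m\partial_{y_m}$ the Euler field on $\bbR^{k+1}$ and $\omega = \rmd y_0\wedge\cdots\wedge\rmd y_k$, one has $k!\,\kappa\omega = k!\,\iota_E\omega = k!\sum_m(-1)^m y_m\,\rmd y_0\wedge\cdots\widehat{\rmd y_m}\cdots\wedge\rmd y_k$, whose restriction to the hyperplane $\{\sum_j y_j = 1\}$ is the Whitney form of the whole simplex. Let $\tilde M:\bbR^{k+1}\to\bbR^{k+1}$ be the linear map with matrix $M$, so that $\phi^\star\lambda_{x(m)}$ is the restriction of $y_m\circ\tilde M$, and hence $\phi^\star\lambda_T$ is the restriction of $\tilde M^\star(k!\,\iota_E\omega)$. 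Since $\tilde M$ is linear it commutes with scaling, so the Euler fields on source and target are $\tilde M$-related; by naturality of the interior product this gives $\tilde M^\star(\iota_E\omega) = \iota_E(\tilde M^\star\omega)$, and since $\tilde M^\star\omega = \det(M)\,\rmd t_0\wedge\cdots\wedge\rmd t_k$ the claimed identity $\phi^\star\lambda_T = \det(M)\lambda_{\Delta^k}$ follows after restriction to $\Delta^k$.

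With this identity both assertions drop out. By Proposition \ref{prop:wdof} the form $\lambda_{\Delta^k}|_{\Delta^k} = k!\,\rmd t_1\wedge\cdots\wedge\rmd t_k$ is constant, hence so is $\phi^\star\lambda_T$, and therefore $\lambda_T|_{T'}$ is constant. Integrating and using $\int_{\Delta^k}\lambda_{\Delta^k} = 1$ from the same proposition yields $\int_{T'}\lambda_T = \det(M)\int_{\Delta^k}\lambda_{\Delta^k} = \det[\lambda_{x(i)}(x'(j))]$. The point demanding care, and the main obstacle, is the bookkeeping between the ambient space $\bbR^{k+1}$ and the affine hyperplane $\{\sum_j t_j = 1\}$ in which $\Delta^k$ lives: I must ensure that the Euler-field naturality computation, carried out in $\bbR^{k+1}$, restricts correctly to that hyperplane, and that the orientations of $\Delta^k$, of $T'$, and of the chosen enumerations are tracked consistently so that no sign is lost. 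Note that $M$ need not be invertible (for instance when $T'$ shares only some vertices with $T$), but the argument never uses this: a singular $M$ simply reproduces the vanishing $\int_{T'}\lambda_T = 0$.
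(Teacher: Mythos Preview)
Your proof is correct and takes a genuinely different route from the paper. The paper works entirely in the barycentric coordinates $\lambda'_j$ of $T'$: it writes $\lambda_i = \sum_j M_{ij}\lambda'_j$, substitutes this into the symmetric-group formula $\lambda_T = \sum_{\sigma} s(\sigma)\lambda_{\sigma(0)}\rmd\lambda_{\sigma(1)}\wedge\cdots\wedge\rmd\lambda_{\sigma(k)}$, expands, and collects the surviving permutation terms to recognize $\det M$. Constancy is argued separately, from the fact that $\lambda_T|_{T'}\in\poly^-_1\alt^k(T')$.

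Your argument is more structural: you identify the Whitney form on a full $k$-simplex as the restriction of the ambient Koszul form $k!\,\iota_E\omega$ on $\bbR^{k+1}$, then exploit that the Euler field is $\tilde M$-related to itself for any linear $\tilde M$, so naturality of the interior product gives $\tilde M^\star(\iota_E\omega)=\det(M)\,\iota_E\omega$ in one stroke. This packages the entire permutation bookkeeping of the paper's proof into a single functorial identity, and yields constancy and the determinant formula simultaneously. The price is the extra layer of abstraction (the ambient $\bbR^{k+1}$ and the restriction to the hyperplane), which you flag but do handle correctly: the key point is that $\phi^\star\lambda_T = \iota^\star\tilde M^\star(k!\,\iota_E\omega)$ with $\iota:\Delta^k\hookrightarrow\bbR^{k+1}$, and nothing requires $\tilde M$ to preserve the hyperplane. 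Your closing remark that the argument never uses invertibility of $M$ is a nice observation not made explicit in the paper.
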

\begin{proof}
That  $\lambda_T$ is constant on $T'$ follows from the fact that the restriction belongs to $\poly^-_1\alt^k(T')$.

Denote by $M$ the matrix on the right hand side of the equality to prove. Let $\lambda'_{x'(i)}$ denote the barycentric coordinate map on $T'$ attached to vertex $x'(i)$. Given our enumerations, we may denote more simply the coordinate maps attached to $T$, by $\lambda_i$ (defined on U) and those attached to $T'$, by $\lambda'_i$ (defined on $T'$).

We have, on $T'$:
\beq \label{eq:lambdaprime}
\lambda_{i} = \sum_j M_{ij} \lambda'_j.
\eeq
We insert this in the expression:
\begin{align}
\lambda_T & = \sum_{\sigma \in \frS[k]} s(\sigma) \lambda_{\sigma(0)}\rmd \lambda_{\sigma(1)} \wedge \ldots \wedge \rmd \lambda_{\sigma(k)},
\end{align}
and get:
\begin{align}
\sum_{\sigma \in \frS[k]} s(\sigma) \!\!\! \!\!\! \sum_{\sigma'(0), \ldots, \sigma'(k) \in [k]} \!\!\!\!\!\!  M_{\sigma(0) \sigma'(0) }\cdots M_{\sigma(k) \sigma'(k)} \lambda'_{\sigma'(0)} \rmd \lambda'_{\sigma'(1)} \wedge \ldots \wedge \rmd \lambda'_{\sigma'(k)}.
\end{align}
Recall from the proof of Proposition \ref{prop:wdof} that,  when $\sigma'$ is a permutation of $[k]$, we have, on $T'$:
\beq
\rmd \lambda'_{\sigma'(1)} \wedge \ldots \wedge \rmd \lambda'_{\sigma'(k)} = s(\sigma') \rmd \lambda'_{1} \wedge \ldots \wedge \rmd \lambda'_{k}. 
\eeq
Moreover this form has integral $1/k!$. If $\sigma'$ is not a permutation, we may write $\sigma'(i) = \sigma'(j)$ for $i \neq j$, and let $\tau$ be the permutation exchanging $i$ and $j$. Then the terms corresponding to the permutations $\sigma$  and $\tau \circ \sigma$ cancel.

Therefore:
\begin{align}
\int_{T'} \lambda_T & = \frac{1}{(k+1)!} \sum_{\sigma, \sigma' \in \frS[k]} s(\sigma)s(\sigma')M_{\sigma(0) \sigma'(0) }\cdots M_{\sigma(k) \sigma'(k)},\\
&= \sum_{\sigma \in \frS[k]} s(\sigma) M_{\sigma(0) 0 }\cdots M_{\sigma(k) k} .
\end{align}
This proves the proposition.
\end{proof}

To complete the computation of (\ref{eq:smalldof}), insert the expression (\ref{eq:lambdaprime}) in the expression for $\lambda^\alpha$. One obtains a weighted sum of terms of the form $(\lambda')^\beta \lambda_T$ for which one can use:
\beq
\int_{T'} (\lambda')^\beta \lambda_T = \frac{\beta_0 ! \ldots \beta_k ! k!}{(\beta_0 + \ldots +\beta_k + k)!}\int_{T'} \lambda_T.
\eeq

\paragraph{Volumetric interpretation of small degrees of freedom.}

Following \cite{Rap05}, we may prove that small dofs for first order polynomial Whitney $k$-forms  
can be interpreted as volumes of suitable simplices. More precisely, given a $k$-face of a $n$-simplex $U$, we have that
\[  \int_{T'} \lambda_T  = \pm \vol ((U\setminus T)\cup T' )/ \vol( U),\]
for all $k$-simplices $T'$ contained in $U$.  The sign $\pm$ depends on whether the orientations of $T'$ and $T$ agree or not. Here $ (U\setminus T) \cup  T'$ denotes the $n$-simplex whose vertices  
are those of $U\setminus T $ together with those of $T'$. The key argument
of the proof is the recursive formula defining Whitney $k$-forms
starting from Whitney $(k-1)$-forms,  Proposition \ref{prop:wrec}.

A similar interpretation is possible for dofs associated to
high order Whitney $k$-forms. Given two $k$-simplices $T,T'\subset \bbR^n$ in a $n$-simplex $U$
and two multiindices $\alpha, \alpha' \in \Sigma_{r-1} [n]$
 we have
\begin{eqnarray*} 
\int_{\{\alpha',T'\}}  \lambda^{\alpha} \lambda_T
 & = &  \int_{\{\alpha',T'\}} \lambda_T  \int_{\{\alpha',T'\} } \lambda^{\alpha} / \vol \{\alpha',T'\} \\
\end{eqnarray*} 
We have denoted by $ \{\alpha',T'\}$, the small $k$-simplex in $\Sigma^k_r(T)$ obtained as follows. Let $\calB$ be the lattice of barycenters of elements in $\Sigma^n_r (T)$, which is indexed by $\Sigma_{r-1}[n]$. In Figure \ref{fig:nest} the case $r=3$ is represented, with the vertices of $\Sigma_r(T)$ as black bullets and the vertices of $\calB$  as small squares. Then $ \{\alpha',T'\}$ occupies, in the small $n$-simplex with barycenter in $\calB$ determined by $\alpha'$, the analogous position as $T'$ in $T$.


The quantities $ {\rm vol}[ (\{\alpha',T'\}) \vee (U\setminus T) ]$ and thus the 
coefficients of the square matrix $A_{\{\alpha,T\}, \{\alpha',T'\}} =
\langle \lambda^{\alpha} \, \lambda_T, \{\alpha',T'\} \rangle$,
can be evaluated
relying on the Cayley-Menger determinant, which 
allows for computing, in any dimension $n \ge 1$, the volume 
of a $n$-simplex  from the lengths of its sides.

Let us consider a $n$-simplex $T$ with vertices $v_0 , \ldots, v_{n}$ and 
denote by $\ell_{ij}$ the Euclidean distance between $v_i$ and $v_j$. Let us then
define the $[n] \times [n]$ matrix $D$ with entries $D_{ij} = \ell_{ij}^2$, $i,j \in [n]$,
 and
the augmented matrix $\tilde{D}$ 
obtained from $D$ by adding the vector $v = [0,1,1,..., 1]$ as first row and 
$v^\top$ as first column. \\

\begin{proposition}\label{prop:caymen}(Cayley-Menger)
The volume $|T|$ of the $n$-simplex $T$, $n \ge 1$, verifies 
the identity:
\begin{equation}\label{CMdet}
|T|^2 = \frac{(-1)^{n+1}}{2^n \ (n !)^2} \ \det (\tilde{D}).
\end{equation} 
\end{proposition}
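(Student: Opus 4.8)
The plan is to factor the computation through the Gram determinant of the edge vectors, where the volume formula is elementary, and then to identify the Cayley--Menger determinant with that Gram determinant by elementary operations. Set $e_i = v_i - v_0$ for $i = 1, \ldots, n$ and let $G$ be the $n \times n$ Gram matrix with $G_{ij} = \langle e_i, e_j \rangle$. The standard formula $|T| = \frac{1}{n!} \, | \det [e_1, \ldots, e_n] |$ gives at once $|T|^2 = \frac{1}{(n!)^2} \det G$. Hence the proposition reduces to the purely algebraic identity
\[
\det(\tilde D) = (-1)^{n+1} \, 2^n \det G .
\]

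The link between the two determinants is polarization. I would first record that, for all $i, j$,
\[
\ell_{ij}^2 - \ell_{i0}^2 - \ell_{0j}^2 = -2 \langle e_i, e_j \rangle = -2 \, G_{ij},
\]
which follows by expanding $\ell_{ij}^2 = |e_i - e_j|^2$ together with $\ell_{i0}^2 = |e_i|^2$ and $\ell_{0j}^2 = |e_j|^2$ (here $e_0 = 0$). This is exactly the combination of entries produced by the natural row and column reductions of $\tilde D$.

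Next I would index the rows and columns of $\tilde D$ by $\{b, 0, 1, \ldots, n\}$, with $b$ the bordering row and column of ones. Subtracting the $v_0$-column from each $v_j$-column ($j \ge 1$), and then the $v_0$-row from each $v_i$-row ($i \ge 1$), are determinant-preserving operations: by the displayed identity the lower-right block indexed by $1, \ldots, n$ becomes $-2G$, while the two subtracted ones cancel so that the border row becomes $(0, 1, 0, \ldots, 0)$ and the border column becomes $(0, 1, 0, \ldots, 0)^\top$. Expanding the resulting determinant by cofactors along the border row leaves a single term (sign $(-1)^{1+2}$) whose minor, expanded in turn along its first column (sign $(-1)^{1+1}$), reduces to $\det(-2G) = (-2)^n \det G$. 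Tracking these two signs yields $\det \tilde D = (-1)^{n+1} 2^n \det G$, and combining with $|T|^2 = \frac{1}{(n!)^2} \det G$ gives precisely (\ref{CMdet}).

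The only delicate point is the sign bookkeeping in the two cofactor expansions; the rest is routine linear algebra. As a check, for $n = 1$ one has $\det \tilde D = 2 \ell_{01}^2$ and the formula returns $|T|^2 = \ell_{01}^2$, fixing the constant.
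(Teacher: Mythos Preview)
Your argument is correct. The row and column subtractions you describe do transform $\tilde D$ into the bordered block form you claim, the polarization identity $\ell_{ij}^2-\ell_{i0}^2-\ell_{0j}^2=-2G_{ij}$ is exactly right, and the two cofactor expansions give the sign $(-1)^{n+1}$ cleanly. The sanity check at $n=1$ confirms the constant.

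For comparison: the paper does not actually prove this proposition, it only cites Berger's \emph{Geometry} \cite{Ber87I}. So you have supplied a complete, self-contained argument where the paper outsources one. Your route through the Gram determinant is the standard one and is essentially what one finds in Berger; it has the merit of isolating the purely algebraic content (the identity $\det\tilde D=(-1)^{n+1}2^n\det G$) from the elementary geometric fact $|T|^2=\det G/(n!)^2$.
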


\begin{proof}
See \cite{Ber87I}.
\end{proof}

\subsection{Resolutions of trimmed polynomial differential forms}

\paragraph{With boundary condition: $\poly^-_r \alt^{k}_0$.}

Consider a simplex $U$ of dimension $n$ all of whose sub-simplices have been oriented. We will frequently identify a $k$-simplex $T$ with the cochain in $\calC^k$ taking the value $1$ at $T$ and $0$ at other simplices. Also, for a cochain $u$, and a simplex $T$, $u_T$ denotes the value of $u$ at $T$.

For any subsimplex $T$ of $U$ the opposite subsimplex in $U$ is denoted $\widehat T$. Define a sign $s(T)= \pm 1$ as follows. Take an orientation compatible enumeration $[k] \to T$ and an orientation compatible enumeration $[l] \to \widehat T$, with $l+k+1 = n$.  This gives an enumeration $[n] \to U$, and we compare with the orientation of $U$ to define $s(T)$. 
We write:
\beq
s: \mapping{\calC^k(U)}{\calC^{d-k-1}(U)}{T}{s(T)\widehat T}.
\eeq

We denote by $\delta': \calC^{k+1} \to \calC^{k} $ the boundary operator, whose matrix is the transpose of the matrix of the coboundary operator $\delta :\calC^k \to \calC^{k+1} $.
 
\begin{proposition}\label{prop:simphodge}
For the cochains attached to $U$ we have a commuting diagram with exact rows
\beq
\xymatrix{
0 \ar[r] & \bbR \ar[r] \ar[d]^s & \calC^0 \ar[r]^\delta \ar[d]^s &  \ldots  \ar[d]^s  \ar[r]^\delta & \calC^{d-1}  \ar[r]^\delta \ar[d]^s  & \calC^{d}  \ar[r] \ar[d]^s & 0 \\
0 \ar[r] & \calC^d \ar[r]^\deltat & \calC^{d-1} \ar[r]^\deltat & \ldots \ar[r]^\deltat & \calC^{0}  \ar[r]^\deltat  & \bbR  \ar[r] & 0 
}
\eeq
\end{proposition}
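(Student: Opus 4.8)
The plan is to treat $s$ as a degreewise isomorphism between the two rows, reduce commutativity to a single local sign identity, and obtain exactness from the acyclicity of the simplex. Write $d=\dim U$. Each vertical map $s$ is an isomorphism: the complementation $T\mapsto\widehat T$ is a bijection from the $k$-faces of $U$ onto its $(d-k-1)$-faces (with $\widehat\emptyset=U$ and $\widehat U=\emptyset$ at the ends), and $s$ merely relabels the corresponding basis cochains with the sign $s(T)=\pm1$. Hence, once each square is shown to commute, $s$ is an isomorphism of complexes from the top row to the bottom row, and it then suffices to prove exactness of a single row together with commutativity.

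For commutativity I would fix an orientation-compatible enumeration of $U$ and, after relabelling, assume its vertices are $0<\cdots<d$ in an order compatible with the orientation of $U$, each face carrying the orientation induced by increasing order; passing to arbitrary orientations only multiplies each simplex by a fixed sign, and these cancel on both sides. With this convention $\delta T=\sum_{j\notin T}\orient(T\cup j,T)\,(T\cup j)$ and $\deltat\,\widehat T=\sum_{j\notin T}\orient(\widehat T,\widehat T\setminus j)\,(\widehat T\setminus j)$, while $\widehat{T\cup j}=\widehat T\setminus j$. Comparing the coefficient of $\widehat T\setminus j$ in $s(\delta T)$ and in $\deltat(s\,T)$, commutativity of the square at degree $k$ is equivalent to
\[
 s(T)\,\orient(\widehat T,\widehat T\setminus j)=\orient(T\cup j,T)\,s(T\cup j)\qquad (j\notin T).
\]
I would verify this by writing each incidence number as $(-1)$ raised to the position of $j$ in the relevant increasing enumeration, and $s(T)=(-1)^{N(T)}$ with $N(T)$ the number of inversions between the blocks $T$ and $\widehat T$; moving $j$ from $\widehat T$ into $T$ changes $N$ in a way that exactly accounts for the two incidence signs. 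The two squares touching $\bbR$ are checked separately and directly, using that the top map $\bbR\to\calC^0$ is the constant cochain, the bottom map $\calC^0\to\bbR$ is the sum of values, and $s(\emptyset)=s(U)=+1$.

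For exactness I would use that the top row is the augmented (reduced) cochain complex of the full simplex on the vertices of $U$, i.e. of the simplicial complex of all nonempty faces of $U$. Since $U$ is contractible this complex is acyclic; one may either cite this or exhibit the standard cone homotopy $h$ contracting toward a fixed vertex $v_0$ (on chains $S\mapsto\{v_0\}\cup S$, dualized on cochains), which satisfies $\id=\delta h+h\delta$ away from the augmentation and disposes of the two ends. Exactness of the bottom row is then automatic: by commutativity $s$ is an isomorphism of complexes, so the bottom row has the same, vanishing, cohomology.

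The main obstacle is the sign bookkeeping in the displayed identity — verifying that the complementation signs $s(T)$ are precisely those intertwining $\delta$ with $\deltat$, and that the conventions at the two $\bbR$-ends are compatible with the interior squares. Any residual degree-dependent sign is constant along the complex and can be absorbed into the vertical isomorphisms without affecting either their isomorphism property or exactness; everything else in the argument is formal.
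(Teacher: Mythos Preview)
Your approach matches the paper's: you reduce commutativity to the same sign identity $s(T)\,\orient(\widehat T,\widehat{T\cup j})=\orient(T\cup j,T)\,s(T\cup j)$ (the paper writes it as $s(T)\,\orient(T,T')=s(T')\,\orient(\widehat{T'},\widehat T)$), and you treat exactness as the acyclicity of the augmented cochain complex of a simplex, which the paper simply calls ``standard.'' If anything, you supply more detail---the inversion-counting sketch for the sign identity and the cone homotopy for exactness---than the paper's two-line proof does.
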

\begin{proof}  The identity:
\begin{equation}
s(T) \orient(T,T') = s(T') \orient(\widehat T', \widehat T),
\end{equation}
gives commutation. Exactness for the cochain complex attached to a single simplex is standard.
\end{proof}

We let $\alt^{k}(U)$ denote the space of constant $k$-forms on $U$, which is spanned by the forms $\rmd \lambda_T$, for $(k-1)$-dimensional faces $T$ of $U$. For any  vector space $\calQ$ of functions on $U$ we put:
\begin{equation}\label{eq:qprod}
\calQ \alt^{k}(U)  = \lspan \{uv \ : \ u \in \calQ, \ v \in \alt^{k}(U)\}.
\end{equation}
It is a tensor product in the sense that a free family in $\calQ$ and a free family in $\alt^{k}(U)$, will have products which constitute a free family in $\calQ \alt^{k}(U)$.

\begin{remark}\label{rem:nottensor} Notice that, from a notational point of view, $\poly^-_r \alt^{k}(U)$ does not come from such a construction. There is no space of scalar polynomials $\poly^-_r$ that can be used in the above definition (\ref{eq:qprod}) to obtain $\poly^-_r \alt^{k}(U)$. Moreover, the characterization as a product (see (\ref{eq:highwhitney})):
\beq
\poly^-_r \alt^{k}(U) = \poly_{r-1}(U) \WF^k(U),
\eeq
is not a tensor product. Indeed, Proposition \ref{prop:wdep} shows that there are free families in $\poly_{r-1}(U)$ and in  $\WF^k(U)$, whose product are not free.
\end{remark}


We denote by $\sigma$ the surjection:
\beq
\sigma: \mapping{\calQ \otimes \calC^{k}}{\calQ \alt^{k+1}(U)}{u \otimes T}{u \rmd \lambda_T}
\eeq

\begin{proposition}\label{prop:resdiff}
For any vector space $\calQ$ of functions on $U$ we have resolutions:
\beq
\xymatrix{
\cdots \ \calQ \otimes \calC^{k-2} \ar[r]^\delta & \calQ \otimes \calC^{k-1} \ar[r]^\delta & \calQ \otimes \calC^{k} \ar[r]^\sigma & \calQ \alt^{k+1} \ar[r] & 0.
}
\eeq
\end{proposition}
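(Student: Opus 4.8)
The plan is to reduce the statement, by exactness of tensoring over the base field, to the single assertion that
\[
\cdots\xrightarrow{\delta}\calC^{k-1}\xrightarrow{\delta}\calC^{k}\xrightarrow{\sigma_0}\alt^{k+1}(U)\to0,\qquad \sigma_0: T\mapsto\rmd\lambda_T,
\]
is exact. This reduction is legitimate because $\calQ\alt^{k+1}(U)$ is a genuine tensor product $\calQ\otimes\alt^{k+1}(U)$: the space $\alt^{k+1}(U)$ is spanned by the free family $\{\rmd\lambda_T\}$ of constant forms, so, by the freeness remark following (\ref{eq:qprod}), products of a free family in $\calQ$ with a free family in $\alt^{k+1}(U)$ remain free. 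Under the identification $\calQ\alt^{k+1}\cong\calQ\otimes\alt^{k+1}$ one has $\sigma=\id_\calQ\otimes\sigma_0$ and $\delta=\id_\calQ\otimes\delta$, and $\calQ\otimes(-)$ preserves exact sequences of vector spaces.

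The key tool is the assignment $\lambda:c\mapsto\sum_T c_T\lambda_T$. By Proposition \ref{prop:wdof} the Whitney forms are independent and span $\WF^j(U)$ by definition, so $\lambda$ is a linear isomorphism $\calC^j(U)\to\WF^j(U)$ for each $j$. It is moreover a cochain map: the identity $\rmd\lambda_T=\sum_i\orient(T\cup i,T)\lambda_{T\cup i}=\lambda(\delta T)$ gives $\rmd\circ\lambda=\lambda\circ\delta$, so $\lambda$ is an isomorphism of complexes $(\calC^\bullet(U),\delta)\to(\WF^\bullet(U),\rmd)$. I will also use that $\rmd\lambda_T=(k+1)!\,\rmd\lambda_{i(0)}\wedge\cdots\wedge\rmd\lambda_{i(k)}$ is a constant form, so $\rmd$ sends $\WF^k(U)$ into $\alt^{k+1}(U)$, and onto it since these forms span $\alt^{k+1}(U)$. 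In particular, under $\lambda$ the map $\sigma_0$ is precisely $\rmd:\WF^k(U)\to\alt^{k+1}(U)$.

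It then remains to transport the exactness of the cochain complex. Since $U$ is a single simplex, $(\calC^\bullet(U),\delta)$ is acyclic in positive degrees with $H^0=\bbR$ (the top row of Proposition \ref{prop:simphodge}), hence so is $(\WF^\bullet(U),\rmd)$ via $\lambda$. For $1\le j\le k$ this gives $\ker(\rmd:\WF^j\to\WF^{j+1})=\im(\rmd:\WF^{j-1}\to\WF^j)$, and because $\rmd(\WF^k)=\alt^{k+1}$ we get $\ker\sigma_0=\ker(\rmd:\WF^k\to\WF^{k+1})=\im(\rmd:\WF^{k-1}\to\WF^k)=\lambda\bigl(\im(\delta:\calC^{k-1}\to\calC^k)\bigr)$, i.e. $\ker\sigma_0=\im(\delta:\calC^{k-1}\to\calC^k)$ after translating back. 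Together with surjectivity of $\sigma_0$ and interior exactness of $\calC^\bullet$, this makes the displayed sequence exact, its leftmost kernel being $\ker(\delta:\calC^0\to\calC^1)=\bbR$. Tensoring with $\calQ$ yields the resolution. (A dimension count using $\sigma_0\circ\delta=\rmd^2\lambda=0$ and $\binom{n+1}{k+1}-\binom{n}{k+1}=\binom{n}{k}$ gives $\ker\sigma_0=\im\delta$ equally well, but the chain isomorphism is cleaner.)

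The only genuinely delicate points are bookkeeping ones. First, one must check that $\calQ\alt^{k+1}(U)$ is really a tensor product, so that tensoring by $\calQ$ is exact; this is exactly the freeness property recorded after (\ref{eq:qprod}), and it fails for $\poly^-_r\alt^k$ as Remark \ref{rem:nottensor} warns, so it is worth isolating. Second, one must identify the left terminus correctly: the constants $\bbR=\ker\delta^0$ do not die under $\sigma_0$ (already $\sum_i\rmd\lambda_i=0$ when $k=0$), so the resolution must be closed on the left by $\calQ\cong\calQ\otimes\bbR$, the constant cochains, rather than by $0$. With that understood, the exactness established above is complete.
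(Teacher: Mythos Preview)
Your argument is correct and is essentially the paper's own proof: reduce to $\calQ=\bbR$, observe that $\sigma_0 u=\sum_{T'}(\delta u)_{T'}\lambda_{T'}$ (your $\sigma_0=\rmd\circ\lambda=\lambda\circ\delta$), conclude $\ker\sigma_0=\ker\delta=\im\delta$ from linear independence of Whitney forms and acyclicity of $\calC^\bullet(U)$, then tensor with $\calQ$.

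Two small slips to clean up. The family $\{\rmd\lambda_T\}_{T\in\calC^k}$ is \emph{not} free: there are $\binom{n+1}{k+1}$ such forms in a space of dimension $\binom{n}{k+1}$ (indeed the relations are exactly $\im\delta$, which is the content of the proposition). Fortunately your tensor-product claim does not need this; the remark after (\ref{eq:qprod}) already says that any free family in $\alt^{k+1}(U)$ multiplied by any free family in $\calQ$ stays free, which is what makes $\calQ\alt^{k+1}\cong\calQ\otimes\alt^{k+1}$. Second, in your final paragraph the constants \emph{do} die under $\sigma_0$ (your parenthetical $\sum_i\rmd\lambda_i=0$ shows exactly this), so the left end of the resolution closes as $0\to\calQ\otimes\bbR\to\calQ\otimes\calC^0\to\cdots$, as you intended.
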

\begin{proof}
Consider:
\beq \label{eq:resl}
\xymatrix{
\cdots \ \calC^{k-2} \ar[r]^\delta &  \calC^{k-1} \ar[r]^\delta & \calC^{k} \ar[r]^\sigma & \alt^{k+1} \ar[r] & 0.
}
\eeq
We claim that it is an exact sequence. Since $\calC^\bs$ is exact and $\sigma$ surjective, we only need to check the situation at $\calC^k$. We have:
\begin{align}
\sigma u = \sum_T u_T \rmd \lambda_T = \rmd (\sum_T u_T \lambda_T) = \sum_{T'}  (\delta u)_{T'} \lambda_{T'}.
\end{align}
Hence $\sigma \delta = 0$ and  $\sigma u= 0 $ iff $\delta u = 0$, iff $u = \delta v$. This proves the claim.

Taking the tensor product with $\calQ$ proves the proposition. Indeed the product $\calQ \alt^{k+1}(U)$ is a tensor product, which can be written $ \calQ \otimes \alt^{k+1}(U)$.
\end{proof}

We denote by $\sigma'$ the surjection:
\beq
\sigma': \mapping{\poly_q \otimes \calC^k}{\poly^-_r \alt^k_0(U)}{u \otimes T}{u \mu_T\lambda_T},
\eeq
where $q=r-n+k-1$.

\begin{proposition}\label{prop:respl0}
We have resolutions:
\beq
\xymatrix{
\cdots \ \poly_q \otimes \calC^{k+2} \ar[r]^{\deltat} & \poly_q \otimes \calC^{k+1} \ar[r]^\deltat & \poly_q \otimes \calC^{k} \ar[r]^{\sigma'} & \poly^-_r \alt^{k}_0 \ar[r] & 0.
}
\eeq
\end{proposition}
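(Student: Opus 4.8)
The plan is to recognize the proposed sequence as the image, under the reflection $s$ of Proposition \ref{prop:simphodge} and the duality of Proposition \ref{prop:duality}, of the resolution of $\poly_q\alt^{n-k}(U)$ supplied by Proposition \ref{prop:resdiff}; here I abbreviate $m=n-k-1$, so that $\widehat T$ is an $m$-simplex whenever $T$ is a $k$-simplex and $\rmd\lambda_{\widehat T}\in\alt^{m+1}(U)=\alt^{n-k}(U)$.

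First I would verify that we have a complex and that $\sigma'$ is onto. Surjectivity onto $\poly^-_r\alt^k_0(U)$ is precisely the characterization (\ref{eq:zerobc}). For $\sigma'\circ\deltat=0$ I take a $(k+1)$-simplex $T$; since $\widehat{T\setminus i}=\widehat T\cup\{i\}$ one has $\mu_{T\setminus i}=\lambda_i\mu_T$, whence
\[
\sigma'(\deltat(u\otimes T))=u\,\mu_T\sum_{i\in T}\orient(T,T\setminus i)\,\lambda_i\lambda_{T\setminus i}=0
\]
by the dependency relation of Proposition \ref{prop:wdep}.

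Next I would handle the interior spots. Since $\poly_q$ is a vector space, the homology of $\poly_q\otimes\calC^\bullet$ with differential $\deltat$ is $\poly_q$ tensored with the homology of the boundary complex $\calC^\bullet$. The bottom row of Proposition \ref{prop:simphodge} exhibits the latter as exact in every positive degree, and $\deltat$ is injective on $\calC^n$; this yields exactness at every $\poly_q\otimes\calC^{k+j}$ with $j\ge 1$.

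The remaining, and hardest, point is exactness at $\poly_q\otimes\calC^k$, i.e. $\ker\sigma'\subseteq\operatorname{im}\deltat$. Suppose $\sum_T u_T\mu_T\lambda_T=0$. By Proposition \ref{prop:duality} this is equivalent to $\sum_T s(T)u_T\,\rmd\lambda_{\widehat T}=0$; writing $R=\widehat T$ and $w_R=s(T)u_T$, this says $w=\sum_R w_R\otimes R$ lies in the kernel of $u\otimes R\mapsto u\,\rmd\lambda_R$, so Proposition \ref{prop:resdiff} (with $\calQ=\poly_q$, at the $\calC^m$ spot) gives $w=\delta z$ for some $z\in\poly_q\otimes\calC^{m-1}$. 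The underlying identity $s(T)\orient(T,T')=s(T')\orient(\widehat{T'},\widehat T)$ of Proposition \ref{prop:simphodge} yields, alongside the displayed relation $s\,\delta=\deltat\,s$, its companion $s\,\deltat=\delta\,s$; as $s$ is a signed bijection it is invertible, so I may set $y=s^{-1}z\in\poly_q\otimes\calC^{k+1}$ and obtain $s(\deltat y)=\delta(sy)=\delta z=w$. Comparing the coefficient of $\widehat T$ on both sides and cancelling the common sign $s(T)$ gives $(\deltat y)_T=u_T$ for every $k$-simplex $T$, that is $u=\deltat y$. The delicate part throughout this last step is the bookkeeping of the signs $s(T)$ together with the index reflection $T\leftrightarrow\widehat T$; one must also check that $s$ carries the two augmentations into one another (the term $\calC^n$ corresponding to $\bbR$) so that the endpoints of the two resolutions are correctly matched.
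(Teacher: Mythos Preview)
Your proof is correct and follows essentially the same approach as the paper: both use Proposition \ref{prop:duality} to identify $\ker\sigma'$ with $\ker\sigma$ via the reflection $s$ of Proposition \ref{prop:simphodge}, and then invoke the resolution of Proposition \ref{prop:resdiff}. The paper's proof is a terse two lines pointing to these three propositions; you have simply unpacked the argument, including the direct verification $\sigma'\circ\deltat=0$ via Proposition \ref{prop:wdep} and the sign bookkeeping for the companion relation $s\,\deltat=\delta\,s$.
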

\begin{proof}
Use Proposition \ref{prop:duality} to construct a map $\poly_q \alt^{n-k} \to \poly^-_r \alt^k_0$. Then use Propositions \ref{prop:simphodge}, \ref{prop:resdiff}.
\end{proof}

\paragraph{Without boundary condition: $\poly^-_r \alt^{k}$.}

We define:
\beq
\deltal : \mapping{ \poly_{r-1} \otimes \simp^{k} } { \poly_{r} \otimes \simp^{k-1} }
{u \otimes T} { \sum_{i\in T} \orient (T,T\setminus i) \lambda_{i} u\otimes T\setminus i}
\eeq

and: 
\beq
\beta: \mapping{\poly_{r-1} \otimes \simp^{k}}{\poly^-_{r} \alt^{k-1}}
{u \otimes T}{u\lambda_T}
\eeq

\begin{lemma}\label{lem:seq} We have sequences:
\beq\label{eq:deltaseq}
\xymatrix{
\poly_{r-2} \otimes \simp^{k+1} \ar[r]^\deltal & \poly_{r-1} \otimes \simp^{k} \ar[r]^\deltal & \poly_{r} \otimes \simp^{k-1}
},
\eeq
and:
\beq
\xymatrix{
\poly_{r-2} \otimes \simp^{k+1} \ar[r]^\deltal & \poly_{r-1} \otimes \simp^{k} \ar[r]^\beta & \poly^-_{r} \alt^{k}
}.
\eeq
 \end{lemma}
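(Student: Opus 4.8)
The two displayed arrow-chains are to be read as complexes, so the lemma reduces to the two vanishing statements $\deltal \circ \deltal = 0$ and $\beta \circ \deltal = 0$. The plan is to first record that the maps land where claimed, and then to evaluate each composition on a generator $u \otimes T$. For well-definedness, note that $\deltal$ multiplies the polynomial factor by a barycentric coordinate $\lambda_i$, which has degree one, while dropping one vertex from the cochain factor; hence it carries $\poly_{r-1} \otimes \simp^k$ into $\poly_r \otimes \simp^{k-1}$, as asserted. For $\beta$, the image $u\lambda_T$ is a product of a polynomial of degree $r-1$ with a Whitney $k$-form, so it lies in $\poly_{r-1}(U)\WF^k(U) = \poly^-_r \alt^k(U)$ by the characterization (\ref{eq:highwhitney}) of Theorem \ref{theo:charprl}.

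For the first sequence I would take a generator $u \otimes T$ with $T$ a $(k+1)$-simplex and compute
\[
\deltal \deltal (u \otimes T) = \sum_{i \in T}\ \sum_{j \in T \setminus i} \orient(T, T\setminus i)\, \orient(T\setminus i, T\setminus\{i,j\})\, \lambda_i \lambda_j\, u \otimes (T\setminus\{i,j\}).
\]
The idea is to group the terms by the unordered pair $\{i,j\}$: the contributions of $(i,j)$ and $(j,i)$ share the same polynomial factor $\lambda_i\lambda_j = \lambda_j\lambda_i$ and the same tensor factor $T\setminus\{i,j\}$, so they differ only through their incidence coefficients. These cancel by the relation
\[
\orient(T, T\setminus i)\, \orient(T\setminus i, T\setminus\{i,j\}) + \orient(T, T\setminus j)\, \orient(T\setminus j, T\setminus\{i,j\}) = 0,
\]
which is exactly the vanishing of the coefficient of $T\setminus\{i,j\}$ in the identity $\delta\delta = 0$. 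Hence $\deltal\deltal = 0$. This is the same two-by-two cancellation already exploited in the proof of Proposition \ref{prop:wdep}.

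For the second sequence I would again start from $u \otimes T$ with $T$ a $(k+1)$-simplex and compute
\[
\beta \deltal(u \otimes T) = \sum_{i \in T} \orient(T, T\setminus i)\, \lambda_i\, u\, \lambda_{T\setminus i} = u \sum_{i \in T} \orient(T, T\setminus i)\, \lambda_i \lambda_{T\setminus i}.
\]
The inner sum vanishes identically by Proposition \ref{prop:wdep}, the dependence relation among Whitney forms, so $\beta\deltal = 0$.

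Both computations are mechanical; the only point requiring care is the sign bookkeeping in the first composition, and since it is entirely absorbed into the standard identity $\delta\delta = 0$ for incidence numbers, I do not expect a genuine obstacle.
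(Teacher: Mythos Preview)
Your proof is correct and follows essentially the same approach as the paper: for $\deltal\deltal=0$ the paper fixes a $(k-1)$-face $T'$ of $T$, names the two missing vertices $i,j$, and invokes the same incidence cancellation you use; for $\beta\deltal=0$ the paper likewise appeals to Proposition~\ref{prop:wdep}. Your additional remarks on the maps landing in the claimed codomains are a harmless elaboration.
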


\begin{proof}
(i) Let $T$ be a $(k+1)$-simplex and $T'$ a $(k-1)$-face of $T$. The two other vertices of $T$ are denoted $i$ and $j$. The component of $\deltal \deltal (u \otimes T)$ on $T'$ is
\beq
(\orient (T, T\setminus i)\orient (T\setminus i, T') + \orient(T, T\setminus j)\orient(T\setminus j, T') )\lambda_i \lambda_j u= 0.
\eeq
Hence $\tau \tau = 0$.

(ii) Let $T$ be a simplex. According to Proposition \ref{prop:wdep} we have
\begin{align}
\sum_{i\in T} \orient(T, T\setminus i) \lambda_i \lambda_{T\setminus i } = 0,
\end{align}
Hence $\beta \tau = 0$.
\end{proof}

The following key observation was made by Bossavit, when comparing the dimension of the space of trimmed polynomial differential forms, with the number of "small" degrees of freedom that determine them (as in Proposition \ref{prop:overdet}). A geometric interpretation of this identity, following \cite{RapBos09},  will be given in the next paragraph.
\begin{lemma}\label{lem:exactdim}
\begin{equation}
\dim \poly^-_r \alt^k = \dim \poly_{r-1} \otimes \simp^k - \dim \poly^-_{r-1} \alt^{k+1}. 
\end{equation}
\end{lemma}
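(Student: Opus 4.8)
The plan is to prove this as a pure dimension count, substituting the closed-form dimensions already available and reducing the claim to an elementary binomial identity, exactly in the spirit of the proof of Lemma~\ref{lem:dimc}. First I would assemble the three ingredients. By~(\ref{eq:dimpr}) with $k=0$, $\dim \poly_{r-1}(U) = \binom{n+r-1}{n}$. The cochain space $\simp^k = \calC^k(U)$ has dimension equal to the number of $k$-faces of the $n$-simplex $U$, namely $\binom{n+1}{k+1}$, so that $\dim(\poly_{r-1}\otimes\simp^k) = \binom{n+r-1}{n}\binom{n+1}{k+1}$. Finally, by~(\ref{eq:dimprm}),
\[
\dim \poly^-_r\alt^k(U) = \binom{r+k-1}{k}\binom{n+r}{n-k}, \qquad \dim \poly^-_{r-1}\alt^{k+1}(U) = \binom{r+k-1}{k+1}\binom{n+r-1}{n-k-1}.
\]

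With these substitutions the claim becomes the combinatorial identity
\[
\binom{r+k-1}{k}\binom{n+r}{n-k} + \binom{r+k-1}{k+1}\binom{n+r-1}{n-k-1} = \binom{n+r-1}{n}\binom{n+1}{k+1},
\]
which I would verify directly. Expanding every binomial in factorials and dividing through by $(n+r-1)!$ puts all three terms over comparable denominators; the useful observation is that the first product contributes a factor $1/(r+k)$, coming from $(r+k-1)!/(r+k)! = 1/(r+k)$, matching the $(r+k)$ already present in the other two terms. Multiplying by the common factor $(r+k)\,(k+1)!\,(r-1)!\,(n-k)!$ then collapses all factorials, and the identity reduces to the linear relation
\[
(n+r)(k+1) + (r-1)(n-k) = (n+1)(r+k),
\]
both sides of which expand to $nk + nr + r + k$.

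The computation is entirely mechanical, so I do not expect a genuine obstacle; the only point requiring care is the telescoping of factorials, in particular tracking the $(r+k)$ factors so that the three terms share one denominator before reduction. I would deliberately avoid the alternative, more structural route---deducing the dimension from exactness of the sequence ending in $\beta$ in Lemma~\ref{lem:seq}---since the resolution results proved afterwards establish that exactness by a dimension count that itself relies on the present identity; proving the lemma by the self-contained binomial computation keeps the argument non-circular.
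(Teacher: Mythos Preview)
Your proposal is correct and follows exactly the paper's approach: substitute the dimension formulas (\ref{eq:dimpr}) and (\ref{eq:dimprm}) to reduce the claim to the binomial identity
\[
\binom{r+k-1}{k+1}\binom{n+r-1}{n-k-1} + \binom{r+k-1}{k}\binom{n+r}{n-k} = \binom{n+r-1}{r-1}\binom{n+1}{k+1},
\]
which the paper dismisses in one line as ``an elementary computation with factorials''; you supply that computation and also correctly observe that the later exactness results depend on this lemma, so a direct count is the non-circular route.
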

\begin{proof}
In view of (\ref{eq:dimprm}) the identity to prove is:
\begin{align}
{ r + k - 1 \choose k+1}{n + r - 1 \choose n-k-1} + {r + k - 1 \choose k}{n + r \choose n - k} = {n + r -1 \choose r-1}{n+1 \choose k+1}.\nonumber
\end{align}
This follows from an elementary 
computation with factorials.
\end{proof}
The following proposition provides an interpretation of the above dimension count in terms of an exact sequence. 

\begin{proposition} There is a unique map $\alpha$ such that the following diagram commutes:
\beq
\xymatrix{
& \poly_{r-2} \otimes \simp^{k+1}\ar[d]^{\beta} \ar[dr]^\deltal \\
0\ar[r] & \poly^-_{r-1} \alt^{k+1} \ar[r]^\alpha & \poly_{r-1} \otimes \simp^k \ar[r]^\beta & \poly^-_r \alt^k \ar[r] & 0.
}
\eeq
It makes the lower row into an exact sequence.
\end{proposition}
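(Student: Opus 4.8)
The plan is to prove, by induction on $r$ and simultaneously for all $k$, the exactness of the lower three-term sequence
\[
\poly_{r-2}\otimes\simp^{k+1}\xrightarrow{\ \deltal\ }\poly_{r-1}\otimes\simp^k\xrightarrow{\ \beta\ }\poly^-_r\alt^k\to 0,
\]
from which the existence, uniqueness and stated properties of $\alpha$ will follow formally. (I use the convention $\poly_j=0$ for $j<0$, and $\poly^-_0\alt^{k+1}=0$, consistent with (\ref{eq:dimprm}).) The one identity driving everything is that the \emph{vertical} map $\beta_{\mathrm{vert}}\colon\poly_{r-2}\otimes\simp^{k+1}\to\poly^-_{r-1}\alt^{k+1}$ factors as $\beta_{\mathrm{vert}}=\sigma\circ\deltal$, where $\sigma$ is the surjection of Proposition \ref{prop:resdiff} taken with $\calQ=\poly_{r-1}$. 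Indeed, $\deltal(u\otimes T)=\sum_{i\in T}\orient(T,T\setminus i)\,\lambda_i u\otimes(T\setminus i)$, so $\sigma\deltal(u\otimes T)=u\sum_{i\in T}\orient(T,T\setminus i)\,\lambda_i\,\rmd\lambda_{T\setminus i}=u\lambda_T=\beta_{\mathrm{vert}}(u\otimes T)$, using the recursion (\ref{eq:wrec}) of Proposition \ref{prop:wrec}.

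For the base case $r=1$ the left space $\poly_{-1}\otimes\simp^{k+1}$ vanishes, while $\beta\colon\poly_0\otimes\simp^k\to\WF^k$ sends $1\otimes T\mapsto\lambda_T$ and is an isomorphism by the linear independence of Whitney forms (Proposition \ref{prop:wdof}); the sequence is therefore exact. For the inductive step I assume the displayed sequence exact at level $r-1$ for every form degree. Surjectivity of $\beta$ at level $r$ is immediate from $\poly^-_r\alt^k=\poly_{r-1}\WF^k$, i.e. (\ref{eq:highwhitney}), and the inclusion $\ran\deltal\subseteq\ker\beta$ is Lemma \ref{lem:seq}(ii). Everything thus reduces to the reverse inclusion $\ker\beta\subseteq\ran\deltal$, which I get by a dimension count.

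Write $\deltal_{\mathrm{in}}\colon\poly_{r-3}\otimes\simp^{k+2}\to\poly_{r-2}\otimes\simp^{k+1}$ and $\deltal_{\mathrm{out}}\colon\poly_{r-2}\otimes\simp^{k+1}\to\poly_{r-1}\otimes\simp^k$. The factorization $\beta_{\mathrm{vert}}=\sigma\circ\deltal_{\mathrm{out}}$ gives $\ker\deltal_{\mathrm{out}}\subseteq\ker\beta_{\mathrm{vert}}$; the relation $\deltal\deltal=0$ of Lemma \ref{lem:seq}(i) gives $\ran\deltal_{\mathrm{in}}\subseteq\ker\deltal_{\mathrm{out}}$; and the inductive hypothesis (level $r-1$, form degree $k+1$) reads $\ker\beta_{\mathrm{vert}}=\ran\deltal_{\mathrm{in}}$. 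These relations force all the spaces to coincide, so $\ker\deltal_{\mathrm{out}}=\ker\beta_{\mathrm{vert}}$. Since $\beta_{\mathrm{vert}}$ is onto $\poly^-_{r-1}\alt^{k+1}$, the rank-nullity theorem then yields $\dim\ran\deltal_{\mathrm{out}}=\dim(\poly_{r-2}\otimes\simp^{k+1})-\dim\ker\beta_{\mathrm{vert}}=\dim\poly^-_{r-1}\alt^{k+1}$. On the other hand, $\beta$ surjective gives $\dim\ker\beta=\dim(\poly_{r-1}\otimes\simp^k)-\dim\poly^-_r\alt^k$, which equals $\dim\poly^-_{r-1}\alt^{k+1}$ by Lemma \ref{lem:exactdim}. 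Combining with $\ran\deltal_{\mathrm{out}}\subseteq\ker\beta$ forces $\ran\deltal_{\mathrm{out}}=\ker\beta$, closing the induction.

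It remains to read off the proposition. Because $\beta_{\mathrm{vert}}$ is surjective and $\ker\beta_{\mathrm{vert}}=\ker\deltal_{\mathrm{out}}$ (so in particular $\ker\beta_{\mathrm{vert}}\subseteq\ker\deltal_{\mathrm{out}}$), there is a unique linear $\alpha\colon\poly^-_{r-1}\alt^{k+1}\to\poly_{r-1}\otimes\simp^k$ with $\alpha\circ\beta_{\mathrm{vert}}=\deltal_{\mathrm{out}}$, which is exactly commutativity of the triangle. Surjectivity of $\beta_{\mathrm{vert}}$ then gives $\ran\alpha=\ran\deltal_{\mathrm{out}}=\ker\beta$, so the row is exact at the middle (and $\beta\circ\alpha=0$ holds automatically). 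Finally $\dim\ran\alpha=\dim\ker\beta=\dim\poly^-_{r-1}\alt^{k+1}$ equals the dimension of the domain of $\alpha$, so $\alpha$ is injective; with surjectivity of $\beta$ this is the asserted short exact sequence. The main obstacle is precisely the reverse inclusion $\ker\beta\subseteq\ran\deltal$: the argument is arranged so that the factorization $\beta_{\mathrm{vert}}=\sigma\circ\deltal_{\mathrm{out}}$ turns the inductive hypothesis into exact knowledge of $\ker\deltal_{\mathrm{out}}$, after which the binomial identity of Lemma \ref{lem:exactdim} makes the two dimensions coincide.
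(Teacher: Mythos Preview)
Your proof is correct and follows essentially the same inductive strategy as the paper, with the same two ingredients: the relation $\ker\deltal_{\mathrm{out}}=\ker\beta_{\mathrm{vert}}$ and the dimension identity of Lemma~\ref{lem:exactdim}. The one organizational difference is that you make the factorization $\beta_{\mathrm{vert}}=\sigma\circ\deltal_{\mathrm{out}}$ explicit from the outset, whereas the paper rediscovers it in coordinates during the injectivity step (``in front of $\rmd\lambda_{T\setminus i}$ we recognize the component of $\deltal v$ on $T\setminus i$''); your formulation is cleaner and makes the two inclusions $\ker\deltal_{\mathrm{out}}\subseteq\ker\beta_{\mathrm{vert}}$ and $\ker\beta_{\mathrm{vert}}\subseteq\ker\deltal_{\mathrm{out}}$ symmetric in origin (factorization vs.\ induction).
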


\begin{proof}
We can regard these two assertions as attached to the couple $(r,k)$.

Assume that the assertion has been proved for $(r-1, k+1)$.

(i) The vertical map $\beta$ is onto. Pick $u\in \poly_{r-2} \otimes \simp^{k+1}$ such that $\beta u= 0$. We want to show that $\deltal u = 0$. But we may write, by the induction hypothesis $u= \alpha v$. Moreover $v= \beta w$. Then $\deltal u = \deltal \deltal w = 0$. This shows existence and uniqueness of $\alpha$.

(ii). We know that $\beta$ is onto. Since $\beta \deltal = 0$ we have $\beta \alpha = 0$, so we have a sequence. We now prove injectivity of 
$\alpha$. Pick $u\in \poly^-_{r-1} \alt^{k+1} $ and suppose $\alpha u = 0$. Write
\beq
u = \beta v \quad \textrm{with} \quad  v = \sum_T v_T \otimes T.
\eeq 
We know that $\deltal v = \alpha \beta v = \alpha u = 0$.
We have:
\begin{align}
\beta v & = \sum_T v_T \lambda_T,\\
             & = \sum_T \sum_{i \in T} v_T \orient(T, T \setminus i) \lambda_i \rmd \lambda_{T \setminus i}.
\end{align}
In this sum we recognize, in front of $\rmd \lambda_{T \setminus i}$, the component of $\deltal v$ on $T \setminus i$, which is $0$. Hence $u = \beta v = 0$.

The only remaining point to check now is exactness of the sequence. This follows from Lemma \ref{lem:exactdim}.
\end{proof}

\begin{proposition}\label{prop:respl} We have a resolution:
\beq
\xymatrix{
\cdots \ \poly_{r-3} \otimes \simp^{k+2} \ar[r]^\deltal & \poly_{r-2} \otimes \simp^{k+1} \ar[r]^\deltal & \poly_{r-1} \otimes \simp^{k} \ar[r]^\beta & \poly^-_{r} \alt^{k} \ar[r] & 0.
}
\eeq
\end{proposition}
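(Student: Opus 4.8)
The plan is to obtain the resolution by splicing together the short exact sequences produced by the preceding proposition. Applied to the shifted pair $(r-j,k+j)$, for each integer $j\geq 0$, that proposition yields both a short exact sequence
\[
0 \to \poly^-_{r-j-1}\alt^{k+j+1} \xrightarrow{\ \alpha\ } \poly_{r-j-1}\otimes\simp^{k+j} \xrightarrow{\ \beta\ } \poly^-_{r-j}\alt^{k+j} \to 0 ,
\]
and the factorization $\deltal=\alpha\circ\beta$ of its connecting map through the intervening trimmed space. Writing $X_j=\poly_{r-j}\otimes\simp^{k+j-1}$ for $j\geq 1$ and $K_j=\poly^-_{r-j}\alt^{k+j}$ for $j\geq 0$, so that $K_0=\poly^-_r\alt^k$ is the space being resolved, these sequences take the uniform shape $0\to K_{j+1}\to X_{j+1}\to K_j\to 0$. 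The key structural observation is that the kernel $K_{j+1}$ of the $j$-th sequence coincides with the cokernel of the $(j+1)$-st: this is precisely the splicing condition.

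First I would read off, from the commuting triangle in the preceding proposition, that the map $\deltal\colon X_{j+1}\to X_j$ appearing in the long sequence is exactly the composite $X_{j+1}\xrightarrow{\beta}K_j\xrightarrow{\alpha}X_j$, with $\beta$ the surjection of the $(j{+}1)$-st short sequence and $\alpha$ the injection of the $j$-th. Exactness of the long sequence then follows by the standard splicing computation. Since $\beta$ is onto $K_j$, the incoming map $\deltal\colon X_{j+1}\to X_j$ has image $\alpha(K_j)$; since $\alpha$ is injective, the outgoing map at $X_j$ has kernel $\alpha(K_j)$ as well (directly from the $j$-th short exact sequence when that map is $\beta$, and because $\alpha$ is injective when it is $\deltal=\alpha\circ\beta$). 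Hence image equals kernel at every $X_j$, while exactness at $K_0=\poly^-_r\alt^k$ is merely the surjectivity of $\beta$.

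Finally I would check that the sequence is finite: $\poly_{r-j}$ vanishes once $j>r-1$ and $\simp^{k+j-1}$ vanishes once $k+j-1>n=\dim U$, so only finitely many $X_j$ are nonzero. At the leftmost nonzero term $X_J$ one has $X_{J+1}=0$, which forces $K_J=0$ and hence injectivity of $\deltal\colon X_J\to X_{J-1}$, closing off the left end. Because the preceding proposition already delivers both the short exact sequences and the crucial factorization, I do not expect any genuine obstacle here; the only points demanding care are the double index bookkeeping (polynomial degree decreasing while form degree increases in lockstep) and confirming that the spliced composites literally reproduce the maps $\deltal$ of the statement, which the commuting triangle of the preceding proposition guarantees.
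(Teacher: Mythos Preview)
Your argument is correct and is essentially the same as the paper's: both rely on the factorization $\deltal=\alpha\circ\beta$ and the short exact sequences of the preceding proposition, and your splicing formulation is just the standard homological packaging of the paper's case-by-case check (its part (i) is your case $j=1$ where the outgoing map is $\beta$, and its part (ii) is your case $j\ge 2$ where it is $\deltal$). Your explicit termination argument is a small addition the paper leaves implicit.
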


\begin{proof}  In view of Lemma \ref{lem:seq} we have a sequence, so that only exactness needs to be proved.

 (i) We first prove exactness of the composition $\beta \tau$. Suppose $\beta u = 0$. Then we can write $u = \alpha v$ and $v = \beta w$. This gives $u = \tau w$.

(ii) Next we prove exactness of the composition $\deltal\deltal$.  Suppose $\deltal u = 0$. Then $\alpha \beta u = 0$, so $\beta u = 0$ by injectivity of $\alpha$. Therefore we can write $u = \alpha v$. Moreover $v= \beta w$. Then $u = \deltal w$.
 \end{proof}

Recall that in \cite{ArnFalWin09} the essential ingredient to deduce bases from spanning families of highorder Whitney forms is Lemma 4.2 of \cite{ArnFalWin06}. We state this result in the next proposition and show how it can be deduced from Proposition \ref{prop:respl}.

\begin{proposition}
Let $U$ be a simplex, in which we choose a distinguished vertex $O$. The Whitney forms on $U$ pertaining to subsimplices of $U$ containing $O$ are linearly independent over the ring of polynomials on $U$.
\end{proposition}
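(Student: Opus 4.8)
The plan is to read the module of linear relations among the Whitney forms straight off the resolution in Proposition \ref{prop:respl}, and then to kill the relevant relations using a cone decomposition of the simplex with respect to the distinguished vertex $O$.

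First I would reduce to a single polynomial degree. Suppose we are given polynomials $u_T$ attached to the $k$-simplices $T$ containing $O$, not all zero, with $\sum_{T \ni O} u_T \lambda_T = 0$, and set $r-1 = \max_T \deg u_T$. Extending by $u_T = 0$ on the $k$-simplices $T \not\ni O$, the element $\omega = \sum_T u_T \otimes T \in \poly_{r-1} \otimes \simp^k$ satisfies $\beta \omega = \sum_T u_T \lambda_T = 0$. By the exactness in Proposition \ref{prop:respl} we have $\ker \beta = \deltal(\poly_{r-2} \otimes \simp^{k+1})$, so $\omega = \deltal w$ for some $w$. The whole task thereby becomes: show that $\deltal w = 0$ whenever $\deltal w$ is supported on simplices containing $O$, which is exactly the condition $u_T = 0$ for $T \not\ni O$.

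Next I would split everything along $O$. Let $U' = \widehat O$ be the $(n-1)$-dimensional face opposite $O$; its subsimplices are precisely the subsimplices of $U$ not containing $O$, and $T \mapsto T \setminus O$ identifies the $j$-simplices of $U$ containing $O$ with the $(j-1)$-simplices of $U'$. Writing $w = (a,b)$, with $a$ supported on simplices avoiding $O$ and $b$ (after stripping $O$) on simplices of $U'$, and choosing vertex orderings in which $O$ comes first, a direct computation of $\deltal$ yields the block form
\[
\deltal^U(a,b) = (\deltal^{U'} a + \lambda_O b,\ -\deltal^{U'} b),
\]
where $\deltal^{U'}$ is the weighted boundary of $U'$ and $\lambda_O$ is treated as a scalar polynomial. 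The two summands in the first slot come respectively from deleting a vertex of an $O$-free simplex and from deleting $O$ itself, while the second slot comes from deleting the remaining vertices of an $O$-containing simplex.

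Finally I would conclude by a homological cancellation. The hypothesis that $\deltal w$ is supported on $O$-containing simplices says exactly that its first block vanishes, that is $\deltal^{U'} a + \lambda_O b = 0$. Applying $\deltal^{U'}$ and using both $\deltal^{U'}\deltal^{U'} = 0$ (as in Lemma \ref{lem:seq}) and the fact that $\deltal^{U'}$ commutes with multiplication by the polynomial $\lambda_O$, one obtains $\lambda_O \deltal^{U'} b = 0$; since polynomial-coefficient chains form a torsion-free module over the integral domain of polynomials on $U$, this forces $\deltal^{U'} b = 0$. Hence the second block $-\deltal^{U'} b$ also vanishes, so $\omega = \deltal w = 0$, contradicting the choice of $r$; thus the forms $\lambda_T$ with $T \ni O$ are independent over polynomials. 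I expect the only delicate point to be the sign bookkeeping in the block formula for $\deltal^U$ — verifying the coefficient $\lambda_O$ of the deletion of $O$ and the sign in the identification $O * S'' \leftrightarrow S''$; once that is pinned down, the cancellation is immediate.
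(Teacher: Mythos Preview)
Your proof is correct, but it takes a more elaborate route than the paper's. Both arguments invoke the resolution of Proposition \ref{prop:respl}, but in opposite directions.

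The paper applies $\deltal$ \emph{forward} to the element $u=\sum_T u_T\otimes T$: from $\ker\beta=\mathrm{im}\,\deltal\subseteq\ker\deltal$ (and conversely from the identity $\beta u=\sum_{T'}(\deltal u)_{T'}\rmd\lambda_{T'}$), one has $\beta u=0$ iff $\deltal u=0$. Then for each $(k-1)$-face $T'$ \emph{not} containing $O$, the component $(\deltal u)_{T'}=\sum_{T\supset T'}\orient(T,T')\lambda_{T\setminus T'}u_T$ has exactly one surviving term, namely $T=T'\cup O$, so $\lambda_O u_{T'\cup O}=0$ and $u_{T'\cup O}=0$. Since every $k$-face containing $O$ arises this way, the proof is finished in one line.

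You instead go \emph{backward}, lifting $u=\deltal w$, and then block-decompose $\deltal$ along the cone on $O$ to show $\deltal w=0$ via $\deltal^{U'}\deltal^{U'}=0$ and torsion-freeness. This is sound, and the sign bookkeeping you flag does work out with an ``$O$ first'' ordering (one gets $\orient(O*S',S')=+1$ and $\orient(O*S',O*(S'\setminus i))=-\orient(S',S'\setminus i)$, matching your block formula). But the lift to $w$ and the block calculus are unnecessary: your own observation that the first block reads $\deltal^{U'}a+\lambda_O b$ is precisely what the paper exploits directly as the $T'$-component of $\deltal u$ for $T'\not\ni O$. The geometric content is identical; the paper simply reads it one step lower in the resolution, which avoids the lift, the block decomposition, and the torsion argument altogether.
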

\begin{proof}
Suppose we have polynomials $u_T$ for each $k$-face $T$ of $U$, such that $u_T=0$ if $T$ does not contain $O$. Suppose that:
\begin{equation}
\sum_T u_T \lambda_T = 0.
\end{equation}
According to Proposition \ref{prop:respl} this can happen iff for each $(k-1)$-face $T'$ of $U$ we have:
\begin{equation}
\sum_T \orient(T,T') \lambda_{T \setminus T'} u_T = 0.
\end{equation}
But when $T'$ does not contain $O$, there is only one term contributing to this sum, namely $T= T' \cup O$. Then $u_T = 0$. Since all $T$ containing $O$ arise in this way we are done.
\end{proof}

\paragraph{Geometric interpretation of Lemma \ref{lem:exactdim}.}

This identity has been firstly observed geometrically by Bossavit,
already before the preparation of \cite{RapBos09}. Indeed, 
 to determine the dimension of the space $\poly_r^-\alt^k (T)$ attached  to an $n$-simplex $T$, we may proceed as follows.
 
Let us start with $n=2$ and consider  the $n$-simplex $T$ together with the 
principal lattice of degree $r > 1$ (see Figure \ref{dfrag2}). 
Connecting the points of this lattice
by lines parallel to the sides of $T$, 
one obtains a partition of $T$ consisting of
$n$-simplexes homothetic to $T$ (the ``small" $n$-simplexes, as they have been called in \cite{RapBos09})
and  other shapes (the so-called ``holes'' in \cite{RapBos09}).

In Figure \ref{dfrag2} (center) the holes are the reversed
triangles  such as the one represented with a thick boundary. Apply a fragmentation operation, cutting $T$ along the lines,  keep the small $n$-simplexes
and eliminate 
all the holes (Fig. \ref{dfrag2} right).

\begin{figure}
\centering
\includegraphics[width = \textwidth]{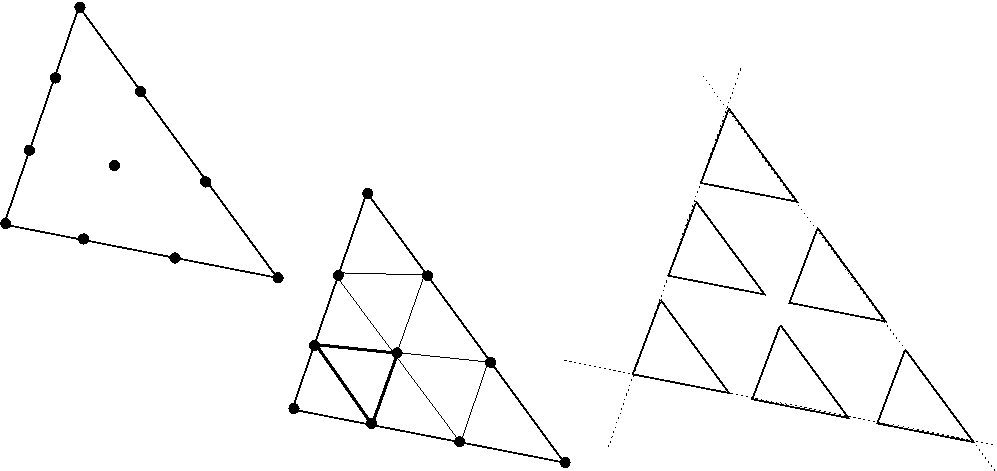}
\caption{A $2$-simplex and the principal lattice of order $r = 3$ (left), the small edges (center), and the fragmented configuration (right).  }
\label{dfrag2}
\end{figure}

The value of $\dim \poly_{r-1}\otimes \calC^k$ corresponds
to the cardinality of the set of small $k$-simplices in the fragmented configuration.
For $r=3$, in Figure \ref{dfrag2} (right), we have 18
nodes, 18 small edges and 6 small triangles. The value of $ \dim \poly_{r-1}^- \alt^{k+1}$
is the number of relations between $k$-simplices when we wish to 
connect the small $n$-simplices
of the fragmented configuration in a lattice structure as in Figure \ref{dfrag2} (center).
For $r=3$, we need to impose 8 conditions among nodes and 3 conditions among edges
(one condition for each reversed triangle, as explained in \cite{RapBos09}).
The value of  $\dim \poly_r^- \alt^k$  is  $\dim \poly_{r-1}\otimes \calC^k$,
(that is the number 
of $k$-simplices in the fragmented configuration),  minus 
$\dim \poly_{r-1}^- \alt^{k+1}$, that is the number of relations 
(see Proposition 3.5 in \cite{RapBos09}) among small $k$-simplices.

These dimension counts are tabulated in Table \ref{tabd2}.

 \begin{table}
\centering
\begin{tabular}{|c|c|c|c|c|}\hline
$r$ & $k$& $\dim \poly_{r-1}\otimes \calC^k$   & 
$ \dim \poly_{r-1}^- \alt^{k+1}$ & $\dim \poly_r^- \alt^k$ \\ \hline

$1$ & 0 & 3 & 0 & 3 \\
& 1 & 3 & 0 & {\bf 3} \\
& 2 & 1 & 0 & {\bf 1} \\ \hline
$ 2$ & 0 & 9  & {\bf 3} & 6 \\
& 1 & 9  & {\bf 1} & {\tt 8} \\
& 2 & 3 & 0 & {\tt 3} \\\hline
$3$ & 0 & 18 & {\tt 8}  & 10 \\
& 1 & 18 & {\tt 3}  & {\em 15} \\
& 2 & 6 & 0 & {\em 6} \\\hline
$ 4$ & 0 & 30 & {\em 15} & 15 \\
& 1 & 30 & {\em 6}  & 24 \\
& 2 & 10 & 0 & 10\\ \hline
$ ... $ & ... &... & ...& ... \\ \hline
\end{tabular}

\caption{$n=2$ in a $n$-simplex $T$.
  \label{tabd2}}
  
\end{table}

\bigskip

A similar counting can be performed for $n=3$ (see Figure \ref{dfrag3}).
Let us consider  the $n$-simplex $T$ together with the 
principal lattice of degree $r > 1$ (In Figure \ref{dfrag3}, $r = 3$). 
Connecting the points of this lattice
by planes parallel to the faces of $T$, 
one obtains a partition of $T$ including 
$n$-simplexes homothetic to $T$ (the ``small" $n$-simplexes)
and other objects (the ``holes'') that can only be octahedra and reversed tetrahedra, \emph{e.g.}, the objects with thick boundary in Fig. \ref{dfrag3} (center).

Apply a fragmentation operation cutting $T$ along the planes,
keep the small $n$-simplexes
and eliminate 
all the holes (Figure \ref{dfrag3}, right). 
Again, the value of  $\dim \poly_{r-1}\otimes  \simp^k$ corresponds
to the cardinality of the set of small $k$-simplices in the fragmented configuration.

For $r=3$, in Figure \ref{dfrag3} right, we have 40
nodes, 60 small edges,  40 small faces and 10 small tetrahedra. 
The value of $\dim \poly_{r-1}^- \alt^{k+1}$
is the number of relations between $k$-simplices when we wish to 
connect the small $n$-simplices
of the fragmented configuration in a lattice structure as in Figure \ref{dfrag3} center.
For $r=3$, we need to impose 20 conditions among nodes, 15 conditions among edges
(12 conditions, one for each reversed triangle on the faces of $T$,
 plus 3 conditions, instead of 4,
for the faces of the central 
reversed tetrahedron), 4 conditions among faces (one for each octahedron).
The value of $\dim \poly_r^- \alt^k$ is  equal to $\dim \poly_{r-1} \otimes  \simp^k$,
that is the number 
of $k$-simplices in the fragmented con\-fi\-gu\-ra\-tion,  minus 
$\dim \poly_{r-1}^- \alt^{k+1}$ , that is the number of relations among small 
$k$-simplices, as given in Table \ref{tabd3}. 

\begin{figure}
\centering
\includegraphics[width = \textwidth]{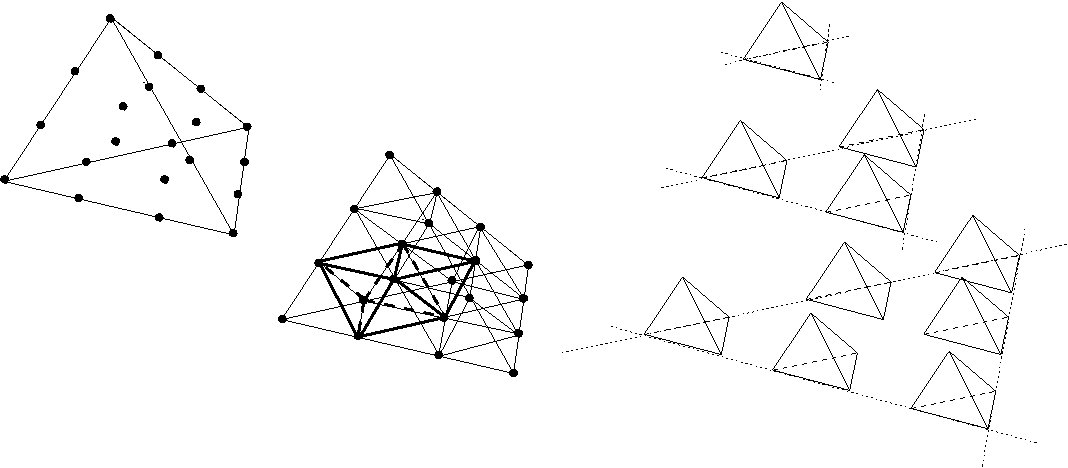}
\caption{A $3$-simplex and the principal lattice of order $r = 3$ (left), the small edges (center) and the fragmented configuration of small tetrahedra (right).  }
\label{dfrag3}
\end{figure}

\begin{table}
\centering

\begin{tabular}{|c|c|c|c|c|}\hline
$r$ & $k$& $\dim \poly_{r-1}\otimes  \simp^k$   & 
$ \dim \poly_{r-1}^- \alt^{k+1}$ & $\dim \poly_r^- \alt^k$  \\ \hline

$1$ & 0 & 4 & 0 & 4 \\
& 1 & 6 & 0 & {\bf 6} \\
& 2 & 4 & 0 & {\bf 4} \\
& 3 & 1 & 0 & {\bf 1} \\\hline
$ 2$ & 0 & 16 & {\bf 6} & 10 \\
& 1 & 24 & {\bf 4} & {\tt 20} \\
& 2 & 16 & {\bf 1} & {\tt 15} \\
& 3 & 4 & 0 & {\tt 4} \\\hline
$3$ & 0 & 40 & {\tt 20} & 20 \\
& 1 & 60 & {\tt 15} & {\em 45} \\
& 2 & 40 & {\tt 4} & {\em 36} \\
& 3 & 10 & 0 & {\em 10} \\\hline
$4$ & 0 & 80 & {\em 45} & 35 \\
& 1 & 120 & {\em 36}  & 84 \\
& 2 & 80  & {\em 10} & 70 \\
& 3 & 20 & 0 & 20 \\\hline
$ ... $ & ... &... & ...& ... \\ \hline
\end{tabular}

\caption{$n=3$ in a $n$-simplex $T$  \label{tabd3}}
\end{table}

\section{Various computations\label{sec:various}}

In this section we present some computations with trimmed polynomial differential forms, expressed as polynomial multiples of Whitney forms. We first provide a parametrized family of bases for polynomials on simplices, which contains the Bernstein and the Lagrange basis as special cases, and allows for de Casteljau type evaluations. Then we show how to compute scalar products of trimmed polynomial differential forms, from the edge lengths of simplices. Finally we provide a formula for wedge products.

\subsection{Polynomials on a simplex \label{sec:polysimp}}

Let $T$ be a simplex of dimension $k$, with vertices enumerated by a map $[k] \to T$. The barycentric coordinates are then denoted $\lambda_0, \ldots, \lambda_k$. For any integer $r \geq 1$, let $\Sigma_r [k]$ be the set of multi-indices $\alpha: [k] \to \bbN$ with weight $|\alpha|= \alpha_0 + \ldots + \alpha_k = r$. It corresponds  to the so-called principal lattice of order $r$ of $T$, consisting of the points in $|T|$ with barycentric coordinates $\alpha/r$, $\alpha \in \Sigma_r [k]$.

The Bernstein basis of $\poly_r(T)$ is indexed by $\Sigma_r [k]$. For $\alpha \in \Sigma_r  [k]$ the attached basis function is:
\begin{equation}
B^\alpha = \frac{r!}{\alpha_0! \ldots \alpha_k !} \lambda_0^{\alpha_0} \ldots \lambda_k^{\alpha_k} .
\end{equation}

The following is a recipe for finding alternative bases of $\poly_r(T)$ indexed by $\Sigma_r [k]$. Fixing $r$, choose, for each $i \in [k]$ and $j \in [ r]$,  a polynomial $\beta_i[j]$ on $\bbR$ of degree $j$. Given such a family $\beta$,  define,  on $T$, for each $\alpha \in \Sigma_r [k]$ the polynomial of degree $r$:
\begin{equation}
C^\alpha = \beta_0[\alpha_0](\lambda_0)\ldots \beta_k[\alpha_k](\lambda_k).
\end{equation}
Specializing further the construction, we choose for each $i \in [k]$, points $t_i[j]$ in $\bbR$  for $0 \leq j < r$ and define $\beta_j$ as follows. We put $\beta_i[0] = 1$ and for $0 < j \leq r$ define the polynomial function:
\begin{equation}
\beta_i[j] : t \mapsto (t-t_i[0]) \ldots (t- t_i[j-1]),
\end{equation}
We then consider the family $C^\alpha$ for $\alpha \in \Sigma_r[k]$. We remark that,  up to scalar factors:
\begin{itemize}
\item the Bernstein basis corresponds to defining $\beta_i[j]$ with $t_i[j] = 0$,
\item the Lagrange basis corresponds to defining $\beta_i[j]$ with $t_i[j] = j/r$.
\end{itemize}

We impose that for all multi-indices $\alpha$ such that $|\alpha|< r$ we have:
\begin{equation}\label{eq:cond}
\sum_{i \in [k]} t_i[\alpha_i] \neq 1.
\end{equation}
This is trivially the case for Bernstein polynomials. More generally, if $t_i[j] \leq j/r$ we get, whenever $|\alpha | < r$:
\[ 
\sum_{i \in [k]} t_i[\alpha_i] \leq |\alpha|/r < 1.
\] 
In particular, the Lagrange basis also satisfies (\ref{eq:cond}).

\begin{remark} If condition (\ref{eq:cond}) does not hold, define a multi-index $a$ with $|a| < r$, such that $\sum_i t_i[a_i] = 1$. Let $x\in T$ be the point with barycentric coordinates $t_i[a_i]$. For all $\alpha \in \Sigma_r [k]$ there is an $i \in [k]$ such that $a_i < \alpha_i$ and then $\beta_i[\alpha_i](t_i[a_i]) = 0$ so that $C^\alpha(x) = 0$. Hence all elements in the span of the $C^\alpha$, annihilate $x$. In particular the span does not contain the constant polynomials.
\end{remark}

\begin{proposition} Suppose condition (\ref{eq:cond}) holds.
Then the family $C^\alpha$, $\alpha \in \Sigma_r[k]$ is a basis for  $\poly_r(T)$.
\end{proposition}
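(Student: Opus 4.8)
The plan is to turn the statement into a spanning assertion via a dimension count, and then prove spanning by induction on $r$, the whole force of hypothesis (\ref{eq:cond}) being concentrated in a single linear recursion among the $C^\alpha$.

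\emph{Reduction.} Since $T$ has dimension $k$, one has $\dim \poly_r(T) = \binom{r+k}{k}$, which is exactly the number of multi-indices in $\Sigma_r[k]$. Hence the family $(C^\alpha)_{\alpha \in \Sigma_r[k]}$ has the right cardinality, and it suffices to prove that it spans $\poly_r(T)$. I would emphasise at the start why a naive leading-term argument is doomed: because the barycentric coordinates satisfy $\sum_l \lambda_l = 1$, the degree-$r$ monomials $\lambda^\alpha$ (essentially the Bernstein family) already span $\poly_r(T)$ without being a graded basis in the usual sense, and in fact the homogeneous top-degree parts of the $C^\alpha$ are \emph{not} independent. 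So (\ref{eq:cond}) has to be used in an essential way.

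\emph{The recursion.} The core of the argument is one identity valid on $T$ thanks to $\sum_l \lambda_l = 1$. Writing $e_l$ for the $l$-th unit multi-index and, for $|\beta| = r-1$, putting $c_\beta = 1 - \sum_l t_l[\beta_l]$, I would expand $(\lambda_0 - t_0[\alpha_0-1])\,C^{\alpha - e_0}$ for any $\alpha$ with $\alpha_0 \geq 1$, and substitute, with $\beta = \alpha - e_0$,
\[
\lambda_0 - t_0[\beta_0] = c_\beta - \sum_{i \geq 1}(\lambda_i - t_i[\beta_i]).
\]
Since multiplying $C^\beta$ by $(\lambda_i - t_i[\beta_i])$ advances the exponent in direction $i$ and so produces $C^{\beta + e_i}$, this collapses to the clean relation
\[
c_\beta\, C^\beta = \sum_{l=0}^{k} C^{\beta + e_l}, \qquad |\beta| = r-1 .
\]
Condition (\ref{eq:cond}) at level $r$ is precisely what forces $c_\beta \neq 0$ for all $|\beta| < r$, in particular for $|\beta| = r-1$. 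Recognising that the affine relation $\sum_l \lambda_l = 1$ converts the product structure of the $C^\alpha$ into this recursion, and checking that (\ref{eq:cond}) is exactly the nonvanishing needed, is the main obstacle; everything afterwards is bookkeeping.

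\emph{Induction on $r$.} The base case $r=1$ reads $C^{e_i} = \lambda_i - t_i[0]$, and these are independent precisely when $\sum_i t_i[0] \neq 1$, which is (\ref{eq:cond}) for $\alpha = 0$. For the inductive step I assume the result for $r-1$; this is legitimate because (\ref{eq:cond}) at level $r$ implies it at level $r-1$, and the polynomials $\beta_i[j]$ with $j \leq r-1$ are unchanged, so $(C^\beta)_{|\beta| = r-1}$ is a basis of $\poly_{r-1}(T)$. Dividing the displayed relation by $c_\beta \neq 0$ shows each such $C^\beta$ lies in $\myspan\{C^\gamma : |\gamma| = r\}$, whence $\poly_{r-1}(T) \subseteq \myspan\{C^\gamma : |\gamma| = r\}$. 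To reach a complement I pass to $\poly_r(T)/\poly_{r-1}(T)$, of dimension $\binom{r+k-1}{k-1}$ and identified (via $\lambda_0 = 1 - \sum_{i\geq 1}\lambda_i$) with the degree-$r$ forms in $\lambda_1,\dots,\lambda_k$; the classes of the $C^\alpha$ with $\alpha_0 = 0$ are represented by the monomials $\prod_{i\geq 1}\lambda_i^{\alpha_i}$ (with $\sum_{i\geq 1}\alpha_i = r$), which form a basis of that quotient. Combining the two facts gives $\myspan\{C^\gamma : |\gamma| = r\} = \poly_r(T)$, and by the cardinality count of the reduction step the family is a basis.
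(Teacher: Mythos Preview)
Your proof is correct and shares its backbone with the paper's: both argue by induction on $r$, both isolate the identity $C^{\beta+e_l} = (\lambda_l - t_l[\beta_l])\,C^\beta$ and sum it over $l$ to get $\sum_l C^{\beta+e_l} = (1-\sum_l t_l[\beta_l])\,C^\beta$, and both invoke (\ref{eq:cond}) precisely to divide by the scalar $c_\beta$ and conclude $\poly_{r-1}(T)\subseteq\myspan\{C^\gamma:|\gamma|=r\}$.

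The divergence is in how you climb from $\poly_{r-1}(T)$ to $\poly_r(T)$. The paper stays symmetric in the vertices: from $\lambda_i C^\beta = C^{\beta+1_i} + t_i[\beta_i]\,C^\beta$ it reads off that each $\lambda_i u$, $u\in\poly_{r-1}(T)$, lies in the span of the $C^\gamma$ together with $\poly_{r-1}(T)$ (already absorbed), and since $\poly_r(T)=\poly_{r-1}(T)+\sum_i\lambda_i\poly_{r-1}(T)$ this finishes the step. You instead single out vertex $0$, pass to the quotient $\poly_r(T)/\poly_{r-1}(T)$ realised in the affine coordinates $\lambda_1,\dots,\lambda_k$, and observe that the $C^\alpha$ with $\alpha_0=0$ are monic of degree $r$ in these variables with distinct leading monomials $\prod_{i\geq 1}\lambda_i^{\alpha_i}$, hence surject onto the quotient. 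Both arguments are short; the paper's is coordinate-free and reuses only the recursion already in hand, while yours trades symmetry for a familiar leading-term count that makes the role of the ``missing'' graded structure explicit.
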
 
\begin{proof}
Let $\calQ_r$ be the span of $C^\alpha$ with  $\alpha \in \Sigma_r [k]$. 

Given $r$, we suppose the proposition holds for $r-1$. We first show that $Q_r$ contains $\poly_{r-1}(T)$.

For any $u \in \poly_{r-1}(T)$, choose coefficients $c_\alpha$ for $\alpha \in \Sigma_{r-1}[k]$ so that:
\[ 
\sum_\alpha c_\alpha C^\alpha = u.
\] 
We regard $t \in \bbR[t]$ as a polynomial. We denote by $1_i \in \Sigma_1 [k]$ the tuple with value $1$ at position $i$, and  value $0$ elsewhere. We write:
\begin{equation}\label{eq:creq}
C^{\alpha + 1_i} = C^\alpha( t-t_i[\alpha_i]) (\lambda_i) = C^\alpha(\lambda_i -t_i[\alpha_i]),
\end{equation}
therefore:
\[ 
\sum_{i \in [k]}  C^{\alpha + 1_i} = C^\alpha (1 - \sum_{i \in [k]} t_i[\alpha_i] ).
\] 
For each $\alpha \in \Sigma_{r-1} [k]$, put:
\[ 
d_\alpha =(1 - \sum_{i \in [k]} t_i[\alpha_i] )^{-1}.
\] 
Then we have:
\[ 
\sum_{i \in [k]} \sum_{\alpha \in \Sigma_{r-1}[k]}  c_\alpha d_\alpha C^{\alpha + 1_i}  = \sum_{\alpha \in \Sigma_{r-1}[k]}  c_\alpha C^\alpha
 = u.
\] 
This shows that $\calQ_r$ contains $\poly_{r-1}(T)$.

Next we remark that for any $u\in \poly_{r-1}(T)$, written as above, and any $i \in [k]$:
\[ 
\lambda_i u = \sum_\alpha c_\alpha C^\alpha( t-t_i[\alpha_i]) (\lambda_i)  + \sum_\alpha c_\alpha t_i [\alpha_i] C^\alpha.
\] 
Therefore $\calQ_r$ also contains $ \lambda_i u$.

This proves that $\calQ_r$ equals $\poly_{r}(T)$.  Dimension count then shows that the $C^\alpha$ with  $\alpha \in \Sigma_r [k]$ constitute a basis. 

\end{proof}

The de Casteljau algorithm is an important algorithm to evaluate Bernstein polynomials. It has received attention recently in finite element contexts \cite{Kir14}\cite{Ain14}. It can be extended to the above bases as follows. We keep notations of the above proof. We first rescale $C^\alpha$ as in the Bernstein basis:
\begin{equation}
\tilde C^{\alpha} = \frac{r!}{\alpha_0! \ldots \alpha_k !} C^{\alpha}.
\end{equation}
Since:
\begin{equation}
\frac{r!}{\alpha_0! \ldots \alpha_k !} = \sum _{i \ : \ \alpha_i \geq 1} \frac{(r-1)!}{\alpha_0! \ldots (\alpha_i -1)! \ldots \alpha_k !},
\end{equation}
we deduce, using (\ref{eq:creq}):
\begin{equation}
\tilde C^{\alpha} (x) = \sum_{i \ : \ \alpha_i \geq 1} \tilde C^{\alpha - 1_i}(x) (\lambda_i(x) - t_i[\alpha_i-1]).
\end{equation}
Therefore, given coefficients $(c_\alpha)$ for $\alpha \in\Sigma_r[k]$ and a point $x$, we can determine coefficients $(c_\alpha)$ for $\alpha \in\Sigma_{r-1}[k]$ such that:
\begin{equation}\label{eq:cast}
\sum_{|\alpha| = r} c_\alpha \tilde C^{\alpha}(x) = \sum_{|\alpha| = r-1} c_{\alpha} \tilde C^{\alpha}(x),
\end{equation}
by putting, for $\alpha \in\Sigma_{r-1}[k]$:
\begin{equation}
c_{\alpha} = \sum_i (\lambda_i(x) - t_i[\alpha_i]) c_{\alpha + 1_i}.
\end{equation}
Repeting the procedure for decreasing $r$, we end up with just one coefficient $c_0$, and this is the value of (\ref{eq:cast}). This provides a stable way of evaluating polynomials expressed in these bases.

\subsection{Computation of scalar products}

Let $V$ be a finite dimensional real vector space. A scalar product on $V$ is a bilinear form $g$ on $V$ which is symmetric and positive definite.
Suppose that we have a spanning family $(e_i)_{i \in I}$, identified with a surjection $\epsilon:\bbR^I \to V$.  We suppose that we know the numbers:
\begin{equation}
G_{ij}= g(e_i, e_j).
\end{equation}
By linear algebra techniques one can construct an $I \times J$ matrix $B$ with independent columns, spanning the kernel of $G$, which is also the kernel of $\epsilon$. Then we have an exact sequence:
\beq
\xymatrix{
0 \ar[r] &\bbR^J \ar[r]^B & \bbR^I \ar[r]^\epsilon & V \ar[r] & 0.
}
\eeq
This provides an alternative to the computation with resolutions used previously in this paper. It is also of independent interest to compute scalar products of differential forms, since variational formulations of PDEs make them appear.

The scalar product $g$ induces a scalar product on $V^\star$ by requiring that $u\mapsto g(u, \cdot)$ is an isometry. In other words if $l, l' \in V^\star$ are represented as $l= g(u, \cdot)$ and $l'=g(u',\cdot)$ then we define $g(l,l')= g(u,u')$. There is a unique scalar product on multi-linear forms such that:
\begin{equation*}
g(u_1 \otimes \cdots \otimes u_k, v_1 \otimes \cdots \otimes v_k)= g(u_1,v_1)\cdots g(u_k,v_k).
\end{equation*}
If we restrict this scalar product to alternating forms we notice:
\begin{equation*}
g(u_1 \wedge \cdots \wedge u_k, v_1 \wedge \cdots \wedge v_k)= k! \sum_{\sigma} s(\sigma) g(u_1,v_{\sigma_1})\cdots g(u_k,v_{\sigma_k}).
\end{equation*}
Actually one scales this scalar product and defines:
\begin{equation*}
g(u_1 \wedge \cdots \wedge u_k, v_1 \wedge \cdots \wedge v_k)= \det [g(u_i, v_j)]_{ij}.
\end{equation*} 
With this scaling, if $(e_i)_{1 \leq i \leq n}$ is an orthonormal basis of $V^\star$, then the family:
\begin{equation*}
e_{i_1} \wedge \cdots \wedge e_{i_k} \rmins{for} 1 \leq i_1 < \cdots < i_k \leq n, 
\end{equation*}
indexed by subsets of $\{1, \cdots, n \}$ of cardinality $k$, is orthonormal in $\Alt^k(V)$.


Let $T$ be a simplex of dimension $k$. Denote the barycentric coordinates by $\lambda_i$. Recall the formula:
\begin{equation}\label{eq:intlambda}
\int_T \lambda_0^{\alpha_0} \cdots \lambda_k^{\alpha_k} = \frac{\alpha_0! \cdots \alpha_k ! k! }{(\alpha_0 + \cdots + \alpha_k +  k)!} \vol T.
\end{equation}

It follows that scalar products of polynomial differential forms on $T$, can be determined from the knowledge the edge lengths of $T$, using two intermediate results:
\begin{itemize}
\item The volume of $T$, as given by the Cayler Menger determinant.
\item The scalar products of $\rmd \lambda_i \cdot \rmd \lambda_j$.
\end{itemize}
For the Cayley Menger determinant see Proposition \ref{prop:caymen}. Here we consentrate on $\rmd \lambda_i \cdot \rmd \lambda_j$.

The tangent space of $T$ is denoted $V$ and the vertices of $T$ are denoted $x_i$.
We denote:
\[ 
y_{ji} = x_j - x_i.
\] 
We have, for $j \neq i$:
\[ 
\rmd \lambda_i (y_{ji}) = -1,
\] 
and, when $i \not \in \{j, k\}$:
\[ 
\rmd \lambda_i (y_{jk}) = 0.
\] 
As developed in Regge calculus \cite{Chr04M3AS}\cite{Chr11NM}, for a constant metric $g$ on $T$ there is a unique symmetric matrix $[g_{ij}]$ such that:
\[ 
g = - \frac{1}{2} \sum_{i \neq j} g_{ij} \rmd \lambda_i \otimes \rmd \lambda_j.
\] 
The edge lengths are then given by:
\[ 
|y_{ji}|^2 = y_{ji} \cdot y_{ji} = g(y_{ji}, y_{ji}) = g_{ij}.
\] 
For each $i$ we determine a vector $z_i$ such that:
\[ 
\forall \xi \in V \quad \rmd \lambda_i (\xi) = g(z_i, \xi).
\] 
We write:
\[ 
z_i = \sum_{j \neq i} z_{ji} y_{ji},
\] 
and determine the scalar coefficients $z_{ji}$ by:
\[ 
\forall k \neq i \quad g(z_i, y_{ki}) = \rmd \lambda_i (y_{ki}) = -1.
\] 
In other words:
\[ 
\forall k \neq i \quad \sum_{j \neq i} z_{ji} g(y_{ji}, y_{ki}) = -1.
\] 
This is a linear system for the vector $(z_{ji})$, with matrix coefficients associated with $j \neq i,\ k\neq i$ given by:
\[ 
g(y_{ji}, y_{ki}) = g_{ji} + g_{ki} - g_{jk}, 
\] 
a formula which can be obtained from:
\[ 
|y_{jk}|^2 = |y_{ji}|^2 + |y_{ki}|^2 - 2 y_{ji} \cdot y_{ki},
\] 
and:
\begin{align}
y_{ji} \cdot y_{ki} & = - \frac{1}{2} \sum_{j' \neq k'} g_{j'k'} \rmd \lambda_{j'}(y_{ji}) \otimes \rmd \lambda_{k'}(y_{ki}),\nonumber \\
& = - \frac{1}{2} g_{jk}.
\end{align}
With these notations one obtains:
\[ 
 \rmd \lambda_i \cdot \rmd \lambda_j = \rmd \lambda_j (z_i) = z_{ji}.
\] 

\subsection{Computation of wedge products}

The following is extracted from the preprint of \cite{Chr07NM}. The proof provides a formula for the wedge product in barycentric coordinates.

\begin{proposition}\label{prop:wedge}
For each $k,l$ and $p,q$ the wedge of forms provides a map:
\begin{equation}
\wedge : \poly^-_{r}\alt^k \times     \poly^-_{q}\alt^l        \to \poly^-_{r+q}\alt^{k+l}.
\end{equation}
\end{proposition}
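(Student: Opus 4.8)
The plan is to bypass the explicit characterization (\ref{eq:highwhitney}) and verify, directly from the definition of the trimmed spaces, the two conditions that place $u\wedge v$ in $\poly^-_{r+q}\alt^{k+l}(U)$. First I would fix an origin in $U$, so that the Koszul operator $\kappa$ is the contraction $\iota_E$ by the radial vector field $E_x=x$; the inclusion to be proved is a statement about the spaces, hence insensitive to this choice (this is the origin-independence noted just after the definition of $\poly^-_r\alt^k$). Then, for $u\in\poly^-_r\alt^k(U)$ and $v\in\poly^-_q\alt^l(U)$, the definition supplies $u\in\poly_r\alt^k(U)$ with $\kappa u\in\poly_r\alt^{k-1}(U)$, and $v\in\poly_q\alt^l(U)$ with $\kappa v\in\poly_q\alt^{l-1}(U)$.

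The first condition, $u\wedge v\in\poly_{r+q}\alt^{k+l}(U)$, is immediate, since wedging adds form degrees and multiplies (hence adds the degrees of) the polynomial coefficients. For the second, I would use that $\kappa=\iota_E$ is an antiderivation of degree $-1$, so that
\[
\kappa(u\wedge v)=(\kappa u)\wedge v+(-1)^k\,u\wedge(\kappa v).
\]
Here $(\kappa u)\wedge v\in\poly_r\alt^{k-1}(U)\wedge\poly_q\alt^l(U)\subseteq\poly_{r+q}\alt^{k+l-1}(U)$ and, likewise, $u\wedge(\kappa v)\in\poly_r\alt^k(U)\wedge\poly_q\alt^{l-1}(U)\subseteq\poly_{r+q}\alt^{k+l-1}(U)$. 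Therefore $\kappa(u\wedge v)\in\poly_{r+q}\alt^{k+l-1}(U)$, which together with the degree bound is exactly the condition defining $\poly^-_{r+q}\alt^{k+l}(U)$. This proves the mapping property, and the only point on this route requiring any care is the origin-independence and the (standard) Leibniz rule for $\iota_E$.

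To also extract the barycentric formula alluded to in the statement, I would instead run the computation through (\ref{eq:highwhitney}): writing $u=\sum_a p_a\lambda_{T_a}$ and $v=\sum_b q_b\lambda_{S_b}$ with $p_a\in\poly_{r-1}(U)$, $q_b\in\poly_{q-1}(U)$ and $\dim T_a=k$, $\dim S_b=l$, one has $u\wedge v=\sum_{a,b}p_aq_b\,(\lambda_{T_a}\wedge\lambda_{S_b})$, so the whole question reduces to the lowest-order inclusion $\WF^k(U)\wedge\WF^l(U)\subseteq\poly_1(U)\WF^{k+l}(U)$, i.e. the case $r=q=1$ via (\ref{eq:bos}). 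Once $\lambda_T\wedge\lambda_S$ is rewritten as $\sum_R c_R\lambda_R$ with each $c_R$ affine, the coefficients $p_aq_bc_R$ have degree at most $r+q-1$ and one lands in $\poly_{r+q-1}(U)\WF^{k+l}(U)=\poly^-_{r+q}\alt^{k+l}(U)$. The only real work on this second route is the explicit reduction of a product of two Whitney forms to an affine combination of Whitney forms, keeping track of the cancellations that arise when $T$ and $S$ share vertices; this is the main obstacle, and also where a closed formula would be read off. The Koszul argument above sidesteps that computation entirely, and is the one I would record as the proof.
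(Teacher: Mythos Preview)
Your Koszul argument is correct and is a genuinely different route from the paper. The paper does exactly what you sketch as the second route: it reduces to the lowest-order case via (\ref{eq:highwhitney}) and then proves by an explicit induction on $k$ the barycentric identity
\[
u_{[0\cdots k-1]}\wedge u_{[k\cdots k+l]}=(-1)^{k-1}\sum_{i=0}^{k-1}(-1)^i u_i\,u_{[0\cdots\hat i\cdots k+l]},
\]
which shows $\WF^k\wedge\WF^l\subseteq\poly_1\WF^{k+l}$ and simultaneously records a closed formula for the wedge of two Whitney forms. Your Koszul proof is shorter and uses only the definition of $\poly^-_r\alt^k$ together with the standard fact that $\kappa=\iota_E$ is an antiderivation; it requires neither (\ref{eq:highwhitney}) nor the machinery behind it. What it does not deliver is the explicit barycentric expansion, which in the paper is the whole point of carrying out the computation by hand. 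So: for the bare mapping property your argument is the cleaner one; the paper's longer computation is there because the formula itself is the goal.
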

\begin{proof}
We prove that $\wedge : \WF^k \times \WF^l \to \poly_1\WF^{k+l}$,
with $\WF^k$ defined in (\ref{eq:bos}),
 from which the proposition follows immediately.

Let $(u_i)$ be some family of functions indexed by consecutive integers. For any set of consecutive integers $k < \cdots < l$ we put:
\begin{equation}
\delta u_{[k \cdots l]}= \rmd u_k \wedge \cdots \wedge \rmd u_l.
\end{equation}
This notation will also be used when one index is missing in the set $\{k, \cdots, l \}$, e.g.:
\begin{equation}
\delta u_{[k \cdots \hat{i} \cdots l]}= \rmd u_k \wedge \cdots 
\widehat{(\rmd u_i)} \cdots \wedge \rmd u_l.
\end{equation}
We also put:
\begin{equation}
u_{[k \cdots l]}= \sum_{i=k}^l (-1)^{i-k} u_i \delta u_{[k \cdots \hat{i} \cdots l]}, 
\end{equation}
a notation which is extended straightforwardly to the case of one missing index.

We will prove, by induction on $k$, that:
\begin{equation}
u_{[0 \cdots k-1]} \wedge u_{[k \cdots k+l]} = (-1)^{k-1} \sum_{i=0}^{k-1} (-1)^i u_i u_{[0 \cdots \hat{i} \cdots k+l]}.
\end{equation}
It is evidently true for $k = 1$ and, if it is true for a given $k \geq 1$, we can make the following computations. We remark that:
\begin{equation}
u_{[-1 \cdots k-1]} \wedge u_{[k \cdots k+l]} = (u_{-1} \delta u_{[0 \cdots k-1]} - \delta u_{-1} \wedge  u_{[0 \cdots k-1]})\wedge u_{[k \cdots k+l]}.
\end{equation}
Concerning the first term on the right hand side, we see that:
\begin{equation}\label{eq:firstpart}
u_{-1} \delta u_{[0 \cdots k-1]}\wedge u_{[k \cdots l]} = (-1)^k u_{-1}(u_{[0 \cdots k+l]} - \sum_{i=0}^{k-1}u_i \delta u_{[0 \cdots \hat{i} \cdots k+l]}).
\end{equation}
For the second term we remark that (by the induction hypothesis):
\begin{eqnarray}
& & \delta u_{-1} \wedge  u_{[0 \cdots k-1]}\wedge u_{[k \cdots k+l]}\\
 &=& (-1)^{k-1}\delta u_{-1} \wedge \sum_{i=0}^{k-1} (-1)^i u_i u_{[0 \cdots \hat{i} \cdots k+l]}\\
\label{eq:secondpart}
&=& (-1)^{k-1}\sum_{i=0}^{k-1} (-1)^i u_i (-u_{[-1 \cdots \hat{i} \cdots k+l]} + u_{-1} \delta u_{[0 \cdots \hat{i} \cdots k+l]})
\end{eqnarray}
Now the last term in (\ref{eq:firstpart}) cancels with the last term in (\ref{eq:secondpart}) and we are left with:
\begin{equation}
u_{[-1 \cdots k-1]} \wedge u_{[k \cdots k+l]} = (-1)^k ( u_{-1} u_{[0 \cdots k+l]} - \sum_{i=0}^{k-1} (-1)^i u_i u_{[-1 \cdots \hat{i} \cdots k+l]}).
\end{equation}
This completes the induction and hence the proof.
\end{proof}



\section*{Acknowledgements}
The authors are grateful to Ragnar Winther for stimulating conversations on finite element exterior calculus and to Alain Bossavit for his enlightenments on the geometry of Whitney forms. 

The problem of extending the de Casteljau algorithm in \S \ref{sec:polysimp} was suggested to us by Michael Floater.

SHC thanks Laboratoire Jean Alexandre Dieudonn\'e in Nice, for the invitation in 2010, where this work was started, and Laboratoire Jacques-Louis Lions in Paris,  for the kind hospitality in 2013, where it was completed.

SHC was supported by the European Research Council through the FP7-IDEAS-ERC Starting Grant scheme, project 278011 STUCCOFIELDS.

\bibliography{alexandria,mybibliography}{}
\bibliographystyle{plain}

\end{document}